\newtheorem{thm}{Theorem}[section]
\newtheorem{lem}[thm]{Lemma}
\newtheorem{prop}[thm]{Proposition}
\newenvironment{manualtheorem}[1]{%
  \manualtheoreminner
}{\endmanualtheoreminner}
\theoremstyle{definition}
\newtheorem{question}[thm]{Question}
\theoremstyle{definition}
\newtheorem{example}[thm]{Example}
\theoremstyle{definition}
\theoremstyle{remark}
\theoremstyle{definition}
\def\paragraph{\@startsection{paragraph}{4}%
  \z@\z@{-\fontdimen2\font}%
  {\normalfont\bfseries}}
\renewcommand{\paragraph}{%
  \@startsection{paragraph}{4}%
  {\z@}{2ex \@plus 1ex \@minus .2ex}{-1em}%
  {\normalfont\normalsize\bfseries}%
}
\newcommand*{%
    \def\svgscale{1}
    \import{./images/}{.pdf_tex}
}[2][1]{%
    \def\svgscale{#1}
    \import{./images/}{#2.pdf_tex}
}
\newcommand{\RR}{\mathbb{R}}
\newcommand{\ZZ}{\mathbb{Z}}
\newcommand{\wt}[1]{\widetilde{#1}}
\newcommand{\no}{\mathcal{N}}
\newcommand{\os}{\mathcal{S}}
\DeclareMathOperator{\Mod}{Mod}
\DeclareMathOperator{\Spec}{Spec}
\title[Pseudo-Anosov homeomorphisms of non-orientable surfaces]{Pseudo-Anosov homeomorphisms of punctured non-orientable surfaces with small stretch factor}
\author{Sayantan Khan}
\address{Department of Mathematics, University of Michigan, 530 Church Street, Ann Arbor, MI 48109}
\email{saykhan@umich.edu}
\urladdr{\url{https://www-personal.umich.edu/~saykhan/}}
\author{Caleb Partin}
\address{Department of Mathematics, Georgia Institute of Technology, 686 Cherry Street, Atlanta, GA 30332}
\email{ctpartin@gmail.com}
\author{Rebecca R. Winarski}
\address{ Department of Mathematics and Computer Science, College of the Holy Cross, 1 College Street, Worcester, MA 01610}
\email{rwinarski@holycross.edu}
\urladdr{\url{https://sites.google.com/site/rebeccawinarski/}}
\begin{document}
\maketitle

\begin{abstract}
  We prove that in the non-orientable setting, the minimal stretch factor of a pseudo-Anosov homeomorphism of a surface of genus $g$ with a fixed number of punctures is asymptotically on the order of $\frac{1}{g}$.
  Our result adapts the work of Yazdi to non-orientable surfaces.
  We include the details of Thurston's theory of fibered faces for non-orientable 3-manifolds.
\end{abstract}

\section{Introduction}
\label{sec:introduction}

Let $S_{g,n}$ be a surface of genus $g$ with $n$ punctures.  The mapping class group of $S_{g,n}$ consists of homotopy classes of orientation preserving homeomorphisms of $S_{g,n}$.  The Nielsen--Thurston classification of mapping classes (elements of the mapping class group) says that each mapping class is periodic, preserves some multicurve, or has a representative that is pseudo-Anosov.  For each pseudo-Anosov homeomorphism $\varphi:S_{g,n}\rightarrow S_{g,n}$, the stretch factor $\lambda(\varphi)$ is an algebraic integer that describes the amount by which $\varphi$ changes the length of curves.  Arnoux--Yoccoz \cite{AY} and Ivanov \cite{ivanov} prove that the set
$$\Spec(S_{g,n})=\{\log(\lambda(\varphi)) \mid \varphi \text{ is a pseudo-Anosov homeomorphism of }S_{g,n}\}$$ is a closed discrete subset of $(0,\infty)$. The minimum of $\Spec(S_{g,n})$:
$$\ell_{g,n}=\min\{\log(\lambda(\varphi)) \mid \varphi \text{ is a pseudo-Anosov homeomorphism of }S_{g,n}\}$$ quantitatively describes both the dynamics of the mapping class group of $S_{g,n}$ and the geometry of the moduli space of $S_{g,n}$.

Penner \cite{penner1991bounds} showed that for orientable surfaces, $$\ell_{g,0}\asymp \frac{1}{g}.$$
  Penner conjectured that $\ell_{g,n}$ will have the same asymptotic behavior for  $n\geq0$ punctures.  Following Penner, substantial attention has been given to finding bounds for $\ell_{g,n}$ \cite{AD,bauer,hironaka,HK,HK20,KT,Loving,minakawa}, calculating $\ell_{g,n}$ for specific values of $(g,n)$ \cite{CH,HS,LT,SKL}, and finding asymptotic behavior of $\ell_{g,n}$ for {\it orientable} surfaces with $n\geq 0$ \cite{KT,tsai2009asymptotic,valdivia,yazdi2018pseudo}.  We adapt a result of Yazdi \cite{yazdi2018pseudo} to non-orientable surfaces.

\begin{thm}\label{thm:stretch1}
  Let $\no_{g,n}$ be a non-orientable surface of genus $g$ with $n$ punctures, and let $\ell_{g,n}'$ be the logarithm of
  the minimum stretch factor of the pseudo-Anosov mapping classes acting on $\no_{g,n}$.
  Then for any fixed $n \in \mathbb{N}$, there is a positive constant $B'_1 = B'_1(n)$ and $B'_2 = B'_2(n)$ such
  that for any $g \geq 3$,
  the quantity $\ell_{g,n}'$ satisfies the following inequalities:
  \begin{align*}
    \frac{B'_1}{g} \leq \ell'_{g,n} \leq \frac{B'_2}{g}.
  \end{align*}
\end{thm}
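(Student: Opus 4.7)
The plan is to establish the two inequalities by rather different techniques, one via an orientation cover and the other via fibered face theory.

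For the lower bound $\ell'_{g,n} \geq B'_1/g$, I would pass to the orientation double cover. If $\varphi \colon \no_{g,n} \to \no_{g,n}$ is pseudo-Anosov, then either $\varphi$ or $\varphi^2$ lifts to a pseudo-Anosov $\widetilde{\varphi}$ on the orientable double cover $S_{g-1,2n}$, whose stretch factor lies in $\{\lambda(\varphi), \lambda(\varphi)^2\}$. Penner's lower bound for orientable surfaces (with its $n$-dependent punctured refinement from the references cited) then gives $\log\lambda(\widetilde\varphi) \geq B_1(2n)/(g-1)$, which translates into a bound of the claimed shape for $\ell'_{g,n}$.

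For the upper bound $\ell'_{g,n} \leq B'_2/g$, I would adapt Yazdi's argument from \cite{yazdi2018pseudo}. The first step is to develop Thurston's theory of fibered faces for a compact non-orientable $3$-manifold $M$: the Thurston norm is a polyhedral semi-norm on $H_2(M, \partial M; \RR)$, fibrations of $M$ over $S^1$ with fiber $F$ correspond to integral lattice points in the open cones over certain top-dimensional ``fibered'' faces, and the associated fiber satisfies $\|y\|_{\mathrm{Th}} = |\chi(F)|$. The second step is the non-orientable analogue of Fried's theorem: on the interior of a fibered cone, the function $y \mapsto \log \lambda(\varphi_y)$, extended by degree $-1$ homogeneity, is continuous. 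The third step is to exhibit an explicit seed non-orientable $3$-manifold $M_0$ with $b_1(M_0) \geq 2$ that fibers over $S^1$ with a non-orientable pseudo-Anosov monodromy, and to locate a ray of primitive integral classes $y_g$ inside a single fibered cone whose fibers are $\no_{g,n}$ for arbitrarily large $g$ and fixed $n$. The final step is then formal: on any compact subset of the fibered face interior, $\log\lambda(\varphi_y) \cdot \|y\|_{\mathrm{Th}}$ is bounded, so $\|y_g\|_{\mathrm{Th}} \asymp g$ yields $\log \lambda(\varphi_{y_g}) = O(1/g)$.

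The main obstacle, already flagged in the abstract, is the first two steps: extending the fibered face package of Thurston and Fried to non-orientable $3$-manifolds. The orientable proofs rely on Poincar\'e--Lefschetz duality between $H_2$ and $H^1$ and on a careful analysis of the suspension flow of a pseudo-Anosov map. In the non-orientable setting one must either twist by the orientation local system or work equivariantly on the orientation double cover of $M$ and descend the conclusions; either way, one must verify that the polyhedral structure of the unit ball, the openness of fibered cones, the identification $\|y\|_{\mathrm{Th}} = |\chi(F)|$, and the continuity of the stretch factor all survive. A secondary difficulty is constructing the seed manifold $M_0$: one needs an explicit non-orientable analogue of the building blocks used in the orientable case (such as the magic manifold), with sufficient homological richness that a single fibered face contains fibers of all large genera with a prescribed number of punctures.
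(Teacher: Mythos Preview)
Your lower-bound argument matches the paper's: lift to the orientation double cover $S_{g-1,2n}$ and invoke Penner's bound. One small overcaution: every homeomorphism of $\no_{g,n}$ has a unique orientation-preserving lift (Proposition~\ref{prop:2}), so there is no need to pass to $\varphi^2$; the stretch factor is preserved exactly.

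For the upper bound, your outline and the paper's share the same infrastructure (extend Thurston's fibered-face package to non-orientable $3$-manifolds, which the paper does on $H^1$ via the orientation double cover rather than on $H_2(M,\partial M)$), but the \emph{construction} step is genuinely different. The paper does not produce a single seed manifold $M_0$ containing all $\no_{g,n}$ as fibers. Instead it follows Yazdi: for each $n$ it builds, via Penner's twist construction on an explicit non-orientable surface $P_{n,k}$, a pseudo-Anosov $f_{n,k}$ whose stretch factor is bounded combinatorially (Lemma~\ref{lem:upperbound}). This already handles the sparse sequence of genera $g_{n,k}=(14k-2)n+2$. Fibered-face theory is then used only to \emph{interpolate}: inside the mapping torus $M_{n,k}$ one finds an embedded incompressible $\no_3$ transverse to the flow, and oriented sums $P_{n,k}+r\cdot\no_3$ fill in the missing genera. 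The stretch-factor bound for these interpolated fibers comes not from a compactness/continuity argument but from the monotonicity theorem of Agol--Leininger--Margalit (Theorem~\ref{thm:alm}): $h(u+v)<h(u)$ for $v\in\overline{\mathcal K}$.

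This difference matters, and it is where your proposal has a real gap. In your step~4 you invoke boundedness of $\log\lambda(\varphi_y)\cdot\|y\|_{\mathrm{Th}}$ on a compact subset of the open fibered face; but for a sequence $y_g$ with $\|y_g\|\to\infty$ there is no reason the normalized points $y_g/\|y_g\|$ stay in any compact subset of the interior---generically they drift to $\partial\mathcal K$, where $h\to\infty$. One must either engineer the sequence to converge to an interior point (which is itself a construction problem you have not addressed) or use a monotonicity statement like Theorem~\ref{thm:alm}, as the paper does. Relatedly, you treat the seed-manifold construction as a ``secondary difficulty,'' but producing a non-orientable $M_0$ whose fibered cone contains primitive classes with fiber $\no_{g,n}$ for \emph{every} large $g$ and a \emph{fixed} number $n$ of punctures is the heart of the problem; the paper's entire Section~\ref{sec:application} (the explicit surfaces $P_{n,k}$, the curve systems, the spectral-radius lemma, and tracking $2n$ fixed singularities through the oriented sum so that exactly $n$ points can be punctured) is devoted to this, and none of it is automatic.
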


\paragraph{Pseudo-Anosov homeomorphisms} Let $S$ be a (possibly non-orientable) surface of finite type.  A homeomorphism $\varphi:S\rightarrow S$ is said to be {\it pseudo-Anosov} if there exist a pair of transverse measured singular foliations $\mathcal{F}_s$ and $\mathcal{F}_u$ and a real number $\lambda$ such that $$\varphi(\mathcal{F}_s)=\lambda^{-1} \cdot \mathcal{F}_s\text{ and } \varphi(\mathcal{F}_u)=\lambda \cdot \mathcal{F}_u.$$  The {\it stretch factor} of $\varphi$ is the algebraic integer $\lambda=\lambda(\varphi)$.

Endow $S$ with a Riemannian metric.  The stretch factor $\lambda(\varphi)$ measures the growth rate of the length of geodesic representatives of a simple closed curve $S$ under iteration of $\varphi$ \cite[Proposition 9.21]{FLP}.  Moreover, $\log(\lambda(\varphi))$ is the minimal topological entropy of any homeomorphism of $S$ that is isotopic to $\varphi$ \cite[Expos\'e 10]{FLP}.

\paragraph{Geometry of moduli space}
Let $\mathcal{T}_{g,n}$ denote the Teichm\"uller space of $S_{g,n}$, that is: the space of isotopy classes of hyperbolic metrics on $S_{g,n}$.
When endowed with the Teichm\"uller metric, the mapping class group of $S_{g,n}$ acts properly discontinuously on $\mathcal{T}_{g,n}$ by isometries.  The quotient of this action is the {\it moduli space} of $S_{g,n}$. The set
$\Spec(S_{g,n})$ is the length spectrum of geodesics in the moduli space of $S_{g,n}$.  Therefore the quantity $\ell_{g,n}$ is the length of the shortest geodesic in the moduli space of $S_{g,n}$.

\paragraph{Explicit bounds} In his foundational work, Penner found $\frac{\log 2}{12g-12+4n}$ to be a lower bound for $\ell_{g,n}$ for orientable surfaces \cite{penner1991bounds}.  He also determined $\frac{\log 11}{g}$ to be an upper bound for $\ell_{g,0}$.  Penner's work proves that $\ell_{g,0}\asymp \frac{1}{g}$.  McMullen  \cite{mcmullen2000polynomial} later asked:
\begin{question}[McMullen]
Does $\displaystyle\lim_{g\rightarrow\infty}g\cdot \ell_{g,0}$ exist, and if so, what does it converge to?
\end{question}
To this end, Bauer \cite{bauer} strengthened the upper bound for $g\cdot \ell_{g,0}$ to $\log 6$, and Minakawa \cite{minakawa} and Hironaka--Kin \cite{HK} further sharpened the upper bounds for $g\cdot \ell_{g,0}$ and $g\cdot \ell_{0,2g+1}$ to $\log(2+\sqrt{3})$.  Later Aaber--Dunfield \cite{AD}, Hironaka \cite{hironaka}, and Kin-Takasawa \cite{KTbounds} determined that $\log\left(\frac{3+\sqrt{5}}{2}\right)$ is an upper bound for $g\cdot \ell_{g,0}$ and conjectured it is the supremum of $g\cdot \ell_{g,0}$.

\paragraph{Asymptotic behavior of punctured surfaces}
Tsai initiated the study of asymptotic behavior of $\ell_{g,n}$ along lines in the $(g,n)$-plane \cite{tsai2009asymptotic}.  In particular, Tsai determined that for orientable surfaces of fixed genus $g\geq 2$, the asymptotic behavior in $n$ is:
$$\ell_{g,n}\asymp \frac{\log n}{n}.$$
Further, she showed that $\ell_{0,n}\asymp \frac{1}{n}.$
Later, Yazdi \cite{yazdi2018pseudo} determined that for an orientable surface with a fixed number of punctures $n\geq 0$, the asymptotic behavior in $g$ is:
$$\ell_{g,n}\asymp \frac{1}{g},$$
confirming the conjecture of Penner.

\paragraph{Non-orientable surfaces}
Let $\no_{g,n}$ be a non-orientable surface of genus $g$ with $n$ punctures.  As above, let $\ell'_{g,n}$ denote the minimum stretch factor of pseudo-Anosov homeomorphisms of $\no_{g,n}$.  For any $n\geq 0$ and $g\geq 1$, $\ell_{g-1,2n}$ is a lower bound for $\ell'_{g,n}$, which can be seen by passing to the orientation double cover of $\no_{g,n}$ (note that the definition of genus is different for orientable and non-orientable surfaces).  Because the upper bounds for $\ell_{g,n}$ are constructed by example, upper bounds for $\ell'_{g,n}$ do not follow immediately from passing to the orientation double cover.  Recently Liechti--Strenner determined $\ell'_{g,0}$ for $g\in\{4,5,6,7,8,10,12,14,16,18,20\}$ \cite{LS}.  Our work captures the asymptotic behavior for the punctured case.


\paragraph{Techniques} To prove Theorem \ref{thm:stretch1}, we adapt the strategy of Yazdi \cite{yazdi2018pseudo} to non-orientable surfaces with punctures.  The lower bound of $\ell'_{g,n}$ is found by lifting to the orientation double cover of $\no_{g,n}$.  The upper bound (as in all prior work) is constructive.  Fix $n\geq 0$: the desired number of punctures.  Yazdi's construction is as follows.  For a sequence of genera $g_{n,k}$ (where $k$ goes from $3$ to $\infty$, and $g_{n,k} = (14k-2)n + 2$), use the Penner construction \cite{penner1988construction} to obtain a homeomorphism $f_{n,k}$ of $S_{g_{n,k},n}$ that is pseudo-Anosov and has low stretch factor.  In order to find pseudo-Anosov homeomorphisms of $S_{g,n}$ with small stretch factor for all $g$ (not just those in the sequence above), construct a mapping torus for each $f_{n,k}$.  To do this Yazdi's appeals to a technique involving the use of Thurston's theory of fibered faces.

\paragraph{Thurston norm for non-orientable 3-manifolds} In Thurston's development of what is now called the Thurston norm for 3-manifolds \cite{thurston1986norm}, his definitions and theorems required that all surfaces were orientable.  Thurston said that the theorems should still be true for non-orientable surfaces, but there are some subtleties that have not been addressed elsewhere in the literature.  In this paper, we write the details of Thurston's theory of fibered faces to non-orientable 3-manifolds.  In particular, for orientable 3-manifolds, the Thurston norm is a norm on the second homology of a 3-manifold that measures the minimum complexity of an embedded (orientable) surface; it will need to be adjusted in non-orientable 3-manifolds. Specifically, the Thurston norm does not recognize embedded non-orientable surfaces in the second homology of a non-orientable 3-manifold.  To address this limitation, we instead calculate the Thurston norm on the first cohomology of a non-orientable manifold.  We develop a (weak) version of Poincar\'e duality in Theorem \ref{thm:strong-duality} that suffices to define a Thurston norm on $H^1(M;R)$ for a non-orientable 3-manifold $M$.

\paragraph{Fibered faces} A special case of Thurston's hyperbolization theorem says that the monodromy of any fibration of a hyperbolic 3-manifold over $S^1$ is a pseudo-Anosov homeomorphism.  Therefore by finding other fibrations of the same 3-manifold, one obtains additional pseudo-Anosov homeomorphism.  Work of Fried \cite{fried1982flow,fried1983transitive}, Matsumoto \cite{matsumoto1987topological}, and Agol--Leininger--Margalit \cite{agol6983pseudo} can be used to bound the stretch factors of certain pseudo-Anosov homeomorphisms obtained in this way.

\paragraph{Outline} In Section \ref{sec:thur-norm-non-orientable} we state Thurston's theory of fibered faces and adapt it to the non-orientable setting.  In Section \ref{sec:mapping-classes-with} we show how Thurston's theory of fibered faces can be used to construct pseudo-Anosov homeomorphisms of low stretch factor for non-orientable surfaces.  Specifically, we state and prove the Nielsen--Thurston classification for non-orientable surfaces.  Then we adapt the results of Fried \cite{fried1982flow,fried1983transitive}, Matsumoto \cite{matsumoto1987topological}, and Agol--Leininger--Margalit \cite{agol6983pseudo} used to construct pseudo-Anosov homeomorphisms with low stretch factor of orientable surfaces to the non-orientable setting.  In Section \ref{sec:application}, we prove Theorem \ref{thm:stretch1}, following the strategy of Yazdi.

\paragraph{Acknowledgements}
This work is the result of an REU at the University of Michigan in summer 2020.  We are grateful to Alex Wright for organizing the REU and to Livio Liechti for suggesting the project. We are thankful to Dan Margalit for his comments and to anonymous referees for their comments.  We are also grateful to the other co-organizers, mentors, and participants in the REU: Paul Apisa, Chaya Norton, Christopher Zhang, Bradley Zykoski, Anne Larsen, and Rafael Saavedra.  The REU was partially funded by NSF Grant DMS 185615.  The third author acknowledges support of the NSF through grant 2002951.

\section{Thurston norm for non-orientable 3-manifolds}
\label{sec:thur-norm-non-orientable}
Thurston defined a norm on $H_2(M;\RR)$ where $M$ is an orientable 3-manifold \cite{thurston1986norm}, and this norm is now called the Thurston norm.  In his manuscript, Thurston wrote ``Most of this paper works also for non-orientable manifolds but for simplicity we only deal with the orientable case.''  However, the details are not explained in Thurston's work or in subsequent literature.  Therefore the goal of this section is to write the details of the Thurston norm for non-orientable 3-manifolds.
We recall the Thurston norm for orientable manifolds in Section \ref{sec:backgr-thurst-norm}.
In Section \ref{sec:thurst-norm-cohom} we describe the challenge of defining the Thurston norm on $H_2(M;\RR)$ if $M$ is non-orientable and present the solution of defining the Thurston norm instead on $H^1(M;\RR)$.
However, Poincar\'e duality does not hold for non-orientable manifolds.
We therefore define a condition -- {\it relative orientability} -- on a pair consisting of a manifold and an embedded surface.
A surface that is relatively orientable in a non-orientable 3-manifold $M$ will have a corresponding cohomology class in $H^1(M;\ZZ)$, giving a version of Poincar\'e duality for non-orientable 3-manifolds as stated in Theorem~\ref{thm:strong-duality}.
Finally in Section \ref{sec:oriented-sums}, we define the oriented sum for relatively oriented embedded surfaces in non-orientable manifolds.

\subsection{Thurston norm and mapping tori}
\label{sec:backgr-thurst-norm}
In this section we recall the Thurston norm for orientable surfaces and how it detects when a 3-manifold fibers over a circle.

\paragraph{Mapping tori} Let $S$ be a surface, and let $\varphi: S \to S$ be a homeomorphism.  A {\it mapping torus} of $S$ by $\varphi$ is the $3$-manifold $M_\varphi$ given by the identification:
\begin{align*}
  M_\varphi \coloneqq \frac{S \times [0,1]}{(x,1) \sim (\varphi(x), 0)}.
\end{align*}

A mapping torus is \emph{fibration over $S^1$}, denoted $S\rightarrow M_\varphi\rightarrow S^1$.
A fibration defines a flow on $M$, called the \emph{suspension flow}, where for any $x_0\in S$ and $t_0\in S^1$ the pair $(x_0,t_0)$ is sent to $(x_0,t_0+t)$.
The fiber of a fibration is the preimage of any point $\theta \in S^1$ under the projection map from $M_{\varphi} \to S^1$.
If we do not specify $\theta$, the fiber as a subset of $M_\varphi$ is only well defined up to isotopy.   The homology class of the fiber in $H_2(M_{\varphi}; \RR)$ is well-defined.

A natural inverse question is to determine when a 3-manifold fibers over a circle, and the possible fibers.  To this end, Thurston established a correspondence between second homology of 3-manifolds and surfaces embedded in 3-manifolds.

\paragraph{Complexity of an embedded surface} Let $M$ be a compact orientable closed $3$-manifold.
Let $S$ be a connected surface embedded in $M$.  The complexity of $S$ is $\chi_-(S) = \max\left\{-\chi(S),0\right\}$.
If the surface $S$ has multiple components $S_1, \ldots, S_m$ then $\chi_-(S) = \displaystyle\sum_{i=1}^m\chi_-(S_i)$.
The elements in $H_2(M ; \ZZ)$ can be represented by embedded surfaces inside of $M$ \cite[Lemma 1]{thurston1986norm}.

\paragraph{Thurston norm} Let $a$ be a homology class in $H_2(M; \ZZ)$.  Define the integer valued norm $x:H_2(M;\ZZ)\rightarrow \ZZ$ as the following:
\begin{align*}
  x(a) = \min\{\chi_-(S) \mid [S] = a \text{ and $S$ is compact, properly embedded and oriented}\}.
\end{align*}

We then linearly extend $x$ to $H_2(M;\mathbb{Q})$.  The \emph{Thurston norm} is the unique continuous $\RR$ valued function that is an extension of $x$ to $H_2(M;\RR)$.
The unit ball for the Thurston norm is a convex polyhedron in $H_2(M;\RR)$.

The following remarkable theorem of Thurston \cite{thurston1986norm} determines all possible fibrations of an oriented 3-manifold over the circle.
We use the restatement of Yazdi \cite{yazdi2018pseudo}.
\begin{thm}[Thurston]
  \label{thm:Thur1}
  Let $M$ be an orientable 3-manifold.  Let $\mathcal{F}$ be the set of homology classes in $H_2(M; \RR)$ that are representable by fibers of fibrations of $M$ over the circle.
\begin{enumerate}[(i)]
\item Elements of $\mathcal{F}$ are in one-to-one correspondence with (non-zero) lattice points inside some union of cones over open faces of the unit ball in the Thurston norm.
\item If a surface $F$ is transverse to the suspension flow associated to some fibration of
  $M \xrightarrow[]{} S^1$ then $[F]$ lies in the closure of the corresponding cone in $H_2(M;\RR)$.
\end{enumerate}
\end{thm}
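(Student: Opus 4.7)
\medskip

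\noindent\textbf{Proof proposal for Theorem~\ref{thm:Thur1}.} The plan is to carry out Thurston's original strategy: associate to each fibration a primitive integral cohomology class, show the set of such classes is open, and show the Thurston norm is linear on the cone they span, forcing that cone to sit over an open face of the unit ball.

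First I would set up the correspondence. A fibration $p \colon M \to S^1$ pulls back the standard angular form to a nowhere-vanishing closed $1$-form $\omega = p^* d\theta$, whose cohomology class $[\omega] \in H^1(M;\RR)$ is primitive integral. Via Poincar\'e duality (available since $M$ is orientable) this corresponds to the homology class $[F] \in H_2(M;\ZZ)$ of a fiber, so it suffices to identify the fibered set inside $H^1(M;\RR)$ and transport the conclusion back. Conversely, by Tischler's theorem, any nowhere-vanishing closed $1$-form representing a rational class can be perturbed to have rational periods and then rescaled to define a fibration $M \to S^1$, so fibrations correspond precisely to primitive lattice points in the subset $\mathcal{U} \subset H^1(M;\RR)$ of classes representable by nowhere-vanishing closed $1$-forms. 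The set $\mathcal{U}$ is open and conical: if $\omega_0$ is nowhere-vanishing and $\omega'$ is a closed $1$-form whose class is close to $[\omega_0]$ in $H^1(M;\RR)$, I can choose $\omega'$ $C^0$-close to $\omega_0$ by adding an exact form, and then $\omega'$ remains nowhere-vanishing.

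Next I would show that on $\mathcal{U}$ the Thurston norm is linear, so $\mathcal{U}$ decomposes as a union of open cones over top-dimensional open faces of the unit ball. The key step, and the main obstacle, is the norm-minimization lemma: any fiber $F$ of a fibration minimizes $\chi_-$ in its homology class. I would argue this by cutting $M$ along $F$ to obtain the product $F \times [0,1]$ with its two $F$-faces; any competitor $F'$ in the same class cobounds with $F$ inside $M$, and a cut-and-paste argument in $F \times [0,1]$ (eliminating sphere and disk components by incompressibility, which is automatic for fibers of fibrations of irreducible pieces) gives $\chi_-(F) \leq \chi_-(F')$. For linearity, I would observe that if $\alpha_1,\dots,\alpha_k$ are the fibered classes corresponding to a basis of a small rational neighborhood of some $\alpha \in \mathcal{U}$, each $\alpha_i$ is represented by a fiber $F_i$ whose Euler characteristic can be computed by integrating against the suspension flow, yielding a formula $\chi(F_\alpha) = -L(\alpha)$ for a linear functional $L$ on the relevant cone. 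Combined with norm-minimization this gives $x(\alpha) = L(\alpha)$ on $\mathcal{U}$, so $\mathcal{U}$ meets each ray in a single open interval and its connected components are open cones over open faces of the Thurston unit ball, proving (i).

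Finally for (ii), suppose $F \subset M$ is transverse to the suspension flow of a fibration with fibered class $\alpha_0$ and corresponding cone $C$. Transversality to the flow means the dual cohomology class $[F]^\vee$ pairs non-negatively with the flow direction and pairs strictly positively wherever $F$ meets an orbit. I would realize $F$ as a weighted oriented sum of sections of the flow: iterating the flow and taking oriented cut-and-paste sums of $F$ with large multiples of the fiber $F_{\alpha_0}$ produces embedded surfaces whose homology classes converge (after rescaling) to $[F]$ and lie in $C$, so $[F]$ lies in $\overline{C}$. The main obstacle here, as above, is propagating the linearity/norm-minimization statement cleanly from fibered classes to merely flow-transverse classes; once linearity of $x$ on $C$ is established, the closed-cone conclusion for transverse surfaces is a limiting argument.
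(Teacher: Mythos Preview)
The paper does not prove Theorem~\ref{thm:Thur1}; it is stated as a result of Thurston (with a citation to \cite{thurston1986norm}) and used as a black box, so there is no proof in the paper to compare your proposal against. Your sketch broadly follows Thurston's original strategy (openness of the fibered set via Tischler-type perturbation, norm-minimization of fibers, and linearity of the norm on fibered cones), which is appropriate, but be aware that several of your steps are only outlined rather than justified---in particular the linearity argument via ``integrating against the suspension flow'' and the limiting argument for (ii) would need substantial work to make rigorous. Since the paper treats this theorem as input rather than as something to be proved, no proof is expected here.
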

The class $[F]$ has orientation such that the positive flow direction is pointing outwards relative to the surface.
An open face of the unit ball is said to be a \emph{fibered face} if the cone over the face contains the fibers of a fibration.

The goal for the rest of this section is to prove a version of Theorem \ref{thm:Thur1} for compact non-orientable $3$-manifolds.
Most of the work in the proof will involve reducing the version for non-orientable $3$-manifolds to the orientable version by passing to the double cover.

\subsection{Thurston norm on cohomology of non-orientable mapping tori}
\label{sec:thurst-norm-cohom}

Let $\no$ be a compact non-orientable surface.
A na\"ive first attempt at defining the Thurston norm would be to define it on the $H_2(\no;\RR)$, like in the orientable case.
However, if the norm is defined on $H_2(\no;\RR)$, the non-orientable version of Theorem \ref{thm:Thur1} will not be true.
Let $\varphi: \no \to \no$ be a homeomorphism and let $N_{\varphi}$ be associated mapping torus.
Clearly, $N_{\varphi}$ fibers over $S^1$, and $\no$ is the fiber of this fibration.  However, the homology class associated to $\no$ is the zero homology class, since the top-dimensional homology of non-orientable compact surfaces is $0$-dimensional.

Our workaround for this problem will be to define a norm on the first cohomology $H^1(N_{\varphi})$ rather than the second homology $H_2(N_{\varphi})$.
By Poincar\'e duality they are isomorphic for orientable 3-manifolds, but that is not true for non-orientable $3$-manifolds.

\paragraph{Poincar\'e Duality}
To see why Poincar\'e duality fails for non-orientable 3-manifolds, we will work through the construction of the isomorphism between first cohomology and second homology for orientable 3-manifolds.  Let $M$ be a 3-manifold.
To define the Poincar\'e dual of $H^1(M;\ZZ)$, we first define a homotopy class of maps $M\rightarrow S^1$.  Then we construct an element of $H_2(M;\ZZ)$.
Let $\alpha$ be a 1-form on $M$ and $[\alpha]$ its class in $H^1(M; \ZZ)$.  Fix a basepoint $y_0\in M$.
The associated map $f_{\alpha}:M\rightarrow S^1$ is given by:
\begin{align}\label{form:map}
  f_{\alpha}(y) \coloneqq  \int_{y_0}^y \alpha \mod \ZZ.
\end{align}
The choice of basepoint does not affect the homotopy class of $f_\alpha$
(see Section 5.1 of \cite{calegari2007foliations} for the details).

Now let $\theta \in S^1$ be a regular value so that $S = f^{-1}_{\alpha}(\theta)$ is a surface.
To construct the associated element of $H_2(M;\ZZ)$, we choose an orientation on $S$ by assigning positive values of $\alpha$ to the outward pointing normal vectors on $S$.
Then $S$ inherits an orientation from the orientation on $M$, and we have defined a
fundamental class $[S]\in H_2(M;\ZZ)$.
We claim that $[S]$ is the Poincar\'e dual to $\alpha$.
\begin{lem}
  \label{lem:poincare-duality}
  Let $\theta$ and $\theta'$ be two regular values of the function $f_{\alpha}$ and let $S=f_\alpha^{-1}(\theta)$ and $S'=f_\alpha^{-1}(\theta')$.
  Then for any closed $2$-form $\omega$ on $M$, the following identities hold:
  \begin{enumerate}[(i)]
  \item $\displaystyle
    \int_{S} \omega = \displaystyle\int_{S'} \omega$ and
 \item $\displaystyle
    \int_S \omega = \displaystyle\int_M \alpha \wedge \omega.$
  \end{enumerate}
  In particular, the homology class of $S$ is Poincar\'e dual to $\alpha$.
\end{lem}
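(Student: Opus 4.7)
The plan is to prove (i) by a Stokes/cobordism argument, then deduce (ii) by a Fubini-type calculation using the defining relation for $f_\alpha$, and finally read off the Poincar\'e duality statement from (ii).

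\textbf{Step 1: Part (i) via Stokes on a cobordism.} Since $\theta$ and $\theta'$ are regular values, by Sard's theorem I can choose a smooth embedded arc $\gamma \subset S^1$ from $\theta$ to $\theta'$ whose interior consists entirely of regular values of $f_\alpha$. Then $W = f_\alpha^{-1}(\gamma)$ is a compact $3$-submanifold of $M$ whose boundary, with the induced orientation, is $S' - S$ (the orientation convention placing positive $\alpha$ along outward normals matches the boundary orientation inherited from $W$). Because $\omega$ is closed, Stokes' theorem gives
\begin{align*}
0 \;=\; \int_W d\omega \;=\; \int_{\partial W} \omega \;=\; \int_{S'} \omega - \int_S \omega,
\end{align*}
which is (i).

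\textbf{Step 2: Part (ii) via Fubini.} From the definition $f_\alpha(y) = \int_{y_0}^y \alpha \bmod \ZZ$ we have $f_\alpha^*(d\theta) = \alpha$, where $d\theta$ denotes the standard unit-length $1$-form on $S^1$. Therefore
\begin{align*}
\int_M \alpha \wedge \omega \;=\; \int_M f_\alpha^*(d\theta) \wedge \omega.
\end{align*}
Applying the coarea/Fubini formula to the smooth $S^1$-valued function $f_\alpha$ (using a partition of unity to reduce to local product charts near regular fibers, which cover a full-measure subset of $M$ by Sard), the right-hand side becomes the iterated integral
\begin{align*}
\int_{S^1} \left( \int_{f_\alpha^{-1}(\tau)} \omega \right) d\tau.
\end{align*}
By part (i), the inner integral equals $\int_S \omega$ for almost every $\tau$, and since $\int_{S^1} d\tau = 1$, this collapses to $\int_S \omega$, proving (ii).

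\textbf{Step 3: Poincar\'e duality.} Identity (ii) says that for every closed $2$-form $\omega$ on $M$, the de Rham pairing of $[\omega]\in H^2(M;\RR)$ with $[S]\in H_2(M;\RR)$ coincides with its cup product pairing with $[\alpha]\in H^1(M;\RR)$ against the fundamental class of $M$. This is precisely the statement that $[S]$ is the Poincar\'e dual of $[\alpha]$.

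\textbf{Main obstacle.} The only real subtlety is the orientation bookkeeping: one must verify that the convention ``positive $\alpha$ points outward'' used to orient $S$ is the same convention that makes $\partial W = S' - S$ in Step 1 and that yields $f_\alpha^*(d\theta) = \alpha$ with the correct sign in Step 2. Once these sign conventions are pinned down consistently, the argument reduces to standard applications of Stokes' theorem and Fubini. This orientability issue is exactly the point at which the argument will later have to be modified in the non-orientable setting, motivating the notion of relative orientability introduced in the following sections.
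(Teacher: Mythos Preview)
Your approach is essentially the same as the paper's: Stokes on the cobordism $f_\alpha^{-1}([\theta,\theta'])$ for (i), then the pullback identity $\alpha = f_\alpha^*(d\theta)$ together with Fubini and Sard for (ii). The paper's write-up is nearly identical to yours.

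One small technical slip in Step~1: Sard's theorem does \emph{not} let you choose an arc from $\theta$ to $\theta'$ whose interior consists entirely of regular values. Sard only says the critical values have measure zero; they could still be dense in $S^1$, so no such arc need exist. The paper avoids this by simply treating $f_\alpha^{-1}([\theta,\theta'])$ as a singular $3$-chain with boundary $S - S'$ (which is fine since $\theta$ and $\theta'$ are regular, so the preimage is a manifold with boundary near $\partial$, and one can triangulate), rather than insisting it be a smooth submanifold throughout. Alternatively, you can just observe that any two regular fibers of a map to $S^1$ are homologous and skip the explicit cobordism. Either way the fix is immediate and your argument goes through.
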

\begin{proof}
  To see (i), observe that $S$ and $S'$ are homologous, ie $f^{-1}_{\alpha}([\theta, \theta'])$ is a singular $3$-chain that has $S$ and $S'$ as boundaries.
  From Stokes' theorem, we have the following:
  \begin{align*}
    \int_{S - S'} \omega &= \int_{f_{\alpha}^{-1}([\theta, \theta'])} d\omega
                         = 0.
  \end{align*}

  To prove (ii), observe that because $\alpha$ is the pullback of $d\xi$ along the map $f_{\alpha}$ we can write the right hand side:
  \begin{align*}
    \int_M \alpha \wedge \omega &= \int_{S^1} \left(   \int_{f_{\alpha}^{-1}(\xi)} \omega \right) d\xi.
  \end{align*}
  By Sard's theorem, almost every $\xi \in [0,1]$ is a regular value.  Therefore the right-hand side is well-defined.
  By (i), the inner integral is a constant function, as we vary over the $\xi$ which are regular values of $f_{\alpha}$.
  Then the integral of $d\xi$ over $S^1$ is $1$, giving us the identity we want:
  \begin{align*}
    \int_M \alpha \wedge \omega = \int_S \omega.
  \end{align*}
\end{proof}
What we have here is an explicit formula for the Poincar\'e duality map from $H^1(M; \RR)$ to $H_2(M; \RR)$.
For orientable $3$-manifolds, this is an isomorphism.
\begin{thm}[Poincar\'e duality for orientable $3$-manifolds]
  \label{thm:orientable-poincare-duality}
  Let $M$ be an orientable $3$-manifold, and let $S$ be an orientable embedded surface. Then there exists a $1$-form
  $\alpha$ and a regular value $\theta \in S^1$ such that $S$ and $f_{\alpha}^{-1}(\theta)$ are homologous surfaces.
\end{thm}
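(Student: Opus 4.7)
The plan is to invoke classical Poincar\'e--Lefschetz duality for the orientable manifold $M$ to produce a cohomology class dual to $[S]$, realize that class as the pullback of $d\xi$ under an $S^1$-valued map, and then apply part (ii) of the lemma just proved to identify the resulting preimage with $S$ up to homology.

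First, because $M$ is orientable, capping with the fundamental class gives an isomorphism $H^1(M;\ZZ) \xrightarrow{\cong} H_2(M;\ZZ)$ (modulo torsion, and working modulo the boundary if $M$ has boundary). The oriented embedded surface $S$ determines an integral class $[S] \in H_2(M;\ZZ)$, so let $\eta \in H^1(M;\ZZ)$ be a class with $\eta \frown [M] = [S]$.

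Next, using the natural identification $H^1(M;\ZZ) \cong [M,S^1]$, represent $\eta$ by a smooth map $g \colon M \to S^1$ and set $\alpha \coloneqq g^{*}(d\xi)$. This is a closed $1$-form with integer periods whose de Rham class is $\eta$, and the map $f_\alpha$ constructed in (\ref{form:map}) is homotopic to $g$. By Sard's theorem, $f_\alpha$ admits a regular value $\theta \in S^1$, and the preimage $S' \coloneqq f_\alpha^{-1}(\theta)$ is a smooth embedded surface, oriented by declaring positive values of $\alpha$ to point outward as described immediately before the statement of the lemma.

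To conclude, apply part (ii) of the lemma: for every closed $2$-form $\omega$, $\int_{S'} \omega = \int_M \alpha \wedge \omega$. Because $\eta$ was chosen so that its Poincar\'e dual is $[S]$, the right-hand side also equals $\int_S \omega$, so $\int_{S'} \omega = \int_S \omega$ for all closed $\omega$. Nondegeneracy of the de Rham pairing then forces $[S'] = [S]$ in $H_2(M;\RR)$, which is exactly the statement that $S$ and $f_\alpha^{-1}(\theta)$ are homologous. The main technical subtlety to verify carefully is integrality: one must ensure the dual class genuinely lifts to $H^1(M;\ZZ)$ rather than only to $H^1(M;\RR)$ so that $f_\alpha$ descends to a map to $S^1$, which is where orientability of $M$ (and integrality of $[S]$) is essential, and, if $M$ has boundary, one must use the Lefschetz form of duality so that the surface $S$ and the level set $f_\alpha^{-1}(\theta)$ are compared in the appropriate relative homology.
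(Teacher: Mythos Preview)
The paper does not actually supply a proof of this theorem: it is stated as the classical Poincar\'e duality isomorphism for orientable $3$-manifolds and then used as a black box. Your argument is a correct and standard way to justify it, and it meshes well with the surrounding material since you explicitly invoke part (ii) of the preceding lemma to identify the homology class of $f_\alpha^{-1}(\theta)$. The one small caveat is that your final step only yields $[S']=[S]$ in $H_2(M;\RR)$, which is all the paper needs (the Thurston norm lives on real (co)homology), but if one wanted integral equality one would argue directly that the Poincar\'e dual of an integral class pulled back from $S^1$ has integral preimage class, avoiding the de Rham pairing.
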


Let $N$ be a non-orientable 3-manifold.  The map above from $H^1(N;\RR)$ to $H_2(N;\RR)$ is still well-defined.
However the map from $H^1(N; \ZZ)$ to $H_2(N; \ZZ)$ has a nontrivial kernel.  For example, when $N_{\varphi}$ is the mapping torus of a non-orientable surface $\no$, as above, the fiber is trivial in $H_2(N;\ZZ)$.

\paragraph{Non-orientable manifolds}
Let $N$ be a non-orientable $3$-manifold.  Let $\wt{N}$ and the covering map $p:\wt{N}\rightarrow N$ be the orientation double covering space of $N$.
Let $\iota$ be the orientation reversing deck transformation
of $\wt{N}$.
Let $N=N_\varphi$ be the mapping torus of the non-orientable surface $\no$ by a homeomorphism $\varphi: \no \to \no$. Then $\wt{N}$ is the mapping torus of $(\os, \wt{\varphi})$, where $\os$ is the orientation double cover of $\no$, and $\wt{\varphi}$ is the orientation preserving lift of $\varphi$.

\paragraph{Defining the Thurston norm on cohomology} In order to define the Thurston norm on $H^1(N;\ZZ)$, we first need to relate $H^1(N; \RR)$ and $H^1(\wt{N}; \RR)$.
We do so by pulling back $H^1(N;\RR)$ to $H^1(\wt{N};\RR)$ via $p$. We also state the following lemma without proof (the proof is elementary).

\begin{lem}
  \label{lem:injective}
  The pullback $p^{\ast}:H^1(N;\RR)\rightarrow H^1(\wt{N};\RR)$ maps $H^1(N; \RR)$ bijectively to the $\iota^{\ast}$-invariant subspace of   $H^1(\wt{N}; \RR)$.
\end{lem}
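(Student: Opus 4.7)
The plan is to exploit the fact that $p \colon \widetilde{N} \to N$ is a regular double cover with deck group $\{1, \iota\}$ of order $2$, and that we are working with $\mathbb{R}$ coefficients so that averaging over the deck group is available. The proof will consist of two ingredients: (a) a tautological check that the image of $p^*$ lies in the $\iota^*$-invariant subspace, and (b) the construction of a transfer map $\tau \colon H^1(\widetilde{N}; \RR) \to H^1(N; \RR)$ that inverts $p^*$ on the invariant subspace.

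For ingredient (a), I would simply observe that since $\iota$ is a deck transformation we have $p \circ \iota = p$, so $\iota^* \circ p^* = (p \circ \iota)^* = p^*$. Thus every class in the image of $p^*$ is automatically $\iota^*$-fixed. For ingredient (b), I would use averaging: given a closed $1$-form $\widetilde{\omega}$ on $\widetilde{N}$, the form $\tfrac{1}{2}(\widetilde{\omega} + \iota^* \widetilde{\omega})$ is strictly $\iota^*$-invariant as a form (not just in cohomology). Because $p$ is a local diffeomorphism and $\iota$ generates the deck group, any $\iota$-invariant differential form on $\widetilde{N}$ descends to a well-defined form on $N$, even though $N$ is non-orientable (only \emph{top-degree} forms care about orientability for descent). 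This descended form is closed and its cohomology class defines $\tau[\widetilde{\omega}]$; I would then verify that $\tau$ is well-defined on cohomology by checking that exact forms average to exact forms.

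The remaining step is to read off injectivity and the image description from the two compositions. The composition $\tau \circ p^*$ is the identity on $H^1(N; \RR)$: pulling a form $\omega$ on $N$ back to $\widetilde{N}$, averaging (which does nothing, since $p^* \omega$ is already $\iota^*$-invariant), and descending, returns $\omega$. This proves $p^*$ is injective. Conversely, $p^* \circ \tau = \tfrac{1}{2}(\mathrm{id} + \iota^*)$ on $H^1(\widetilde{N}; \RR)$, which acts as the identity on any $\iota^*$-invariant class. Combined with (a), this shows $p^*$ maps $H^1(N; \RR)$ onto precisely the $\iota^*$-invariant subspace.

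The only potentially subtle point is the legitimacy of the averaging construction, which is why the lemma is stated over $\RR$ rather than $\ZZ$: dividing by $|{\rm deck}| = 2$ is essential and fails integrally (the integral version has kernel and cokernel consisting of $2$-torsion). Everything else is formal. If one prefers a spectral-sequence perspective, the same conclusion follows immediately from the Cartan--Leray spectral sequence for the $\ZZ/2$-cover with $\RR$ coefficients, since the higher group cohomology of $\ZZ/2$ with coefficients in an $\RR$-vector space vanishes; but the direct averaging argument above is more transparent and suits the level of the paper.
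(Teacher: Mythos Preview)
Your proof is correct; the transfer/averaging argument is the standard way to establish this isomorphism for a finite regular cover with real coefficients. The paper itself states this lemma without proof, remarking only that ``the proof is elementary,'' so your argument is exactly the kind of justification the authors had in mind and there is nothing further to compare.
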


Next we use Lemma \ref{lem:injective} to define the Thurston norm on $H^1(N; \RR)$.

\paragraph{Thurston norm for non-orientable $3$-manifolds}
  Let $\alpha \in H^1(N;\RR)$ and let $\wt{x}$ be the Thurston norm on $H^1(\wt{N};\RR)\cong H_2(\wt{N};\RR)$.
  The \emph{Thurston norm on $H^1(N; \RR)$}, is the norm $x: H^1(N;\RR) \rightarrow \RR$ defined:
  \begin{align*}
    x(\alpha) \coloneqq \wt{x}(p^{\ast}\alpha).
  \end{align*}

Note that defining the Thurston norm on $H^1(N; \RR)$ rather than $H_2(N; \RR)$ is not quite satisfactory.
In particular, fibers of fibrations are embedded surfaces in $N$.  In the orientable case, the embedded surfaces define the Thurston norm.
In Section \ref{sec:weak-inverse-poinc}, we develop a (weak) version of Poincar\'e duality for non-orientable 3-manifolds.

\subsection{Weak inverse to the Poincar\'e duality map}
\label{sec:weak-inverse-poinc}

We state and prove a weak version of Poincar\'e duality for {\it relatively oriented} (non-orientable) surfaces embedded in 3-manifolds as Theorem \ref{thm:strong-duality}.

\paragraph{Relative oriented surfaces}
  Let $M$ be a $3$-manifold, and $S$ an embedded surface in $M$.
  The surface $S$ is said to be \emph{relatively oriented with respect to $M$} if there is a nowhere vanishing vector field on $S$ that is transverse to the tangent plane of $S$.
  Two such vector fields are said to induce the same orientation if they induce the same local orientation after choosing a local frame for $S$.
  A surface $S$ is \emph{relatively oriented} in $M$ if both $S$ and the choice of positive normal vector field are specified.

If $S$ and $M$ are orientable, then $S$ is relatively oriented with respect to $M$.
But even if $M$ is non-orientable, a non-orientable embedded surface $S$ may be relatively oriented in $M$.  In particular, we have the following Lemma.
\begin{lem}
  \label{lem:fibers-relatively-oriented}
  Let $\no$ be the fiber of a fibration $f: N \to S^1$.
  Then $\no$ is relatively oriented in $N$.
\end{lem}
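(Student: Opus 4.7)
The plan is to construct, using only the orientation of the base $S^1$ (crucially, no orientation on $N$ is required), an explicit nowhere-vanishing vector field on $\no$ that is transverse to the tangent plane of $\no$.

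First I would fix a Riemannian metric on $N$ and use the standard orientation on $S^1$, so that the positive unit tangent vector $\partial_\theta$ is well defined at every point of $S^1$. Since $f: N \to S^1$ is a fibration it is a submersion, and for every $p \in \no$ the differential satisfies $\ker(df_p) = T_p \no$. Next, at each $p \in \no$ I would define $X_p \in T_p N$ to be the unique vector orthogonal to $T_p\no$ (with respect to the chosen metric) such that $df_p(X_p) = \partial_{\theta}$ at $f(p)$. Such an $X_p$ exists and is unique because the restriction of $df_p$ to the orthogonal complement of $\ker(df_p)$ in $T_pN$ is a linear isomorphism onto $T_{f(p)}S^1$.

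Then I would check three things about the assignment $p \mapsto X_p$: it depends continuously (indeed smoothly) on $p$, because both $df$ and the metric do; it is nowhere zero, because $df_p(X_p) = \partial_\theta \neq 0$; and it is transverse to $T_p\no$, because by construction $X_p$ lies in the orthogonal complement of $T_p\no$. This produces the required nowhere-vanishing transverse vector field on $\no$, with the induced positive direction being the suspension-flow direction, i.e.\ the direction of increasing $\theta$ under $f$.

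There is no real obstacle here. The only conceptual point worth emphasizing is that the construction never invokes an orientation of $N$: it only uses that $f$ is a submersion and that $S^1$ is oriented. This is precisely the mechanism that allows a non-orientable fiber of a fibration over $S^1$ to carry a canonical relative orientation, which is exactly what is needed later when associating Poincar\'e-dual cohomology classes to fibers in the non-orientable setting.
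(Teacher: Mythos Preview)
Your argument is correct and is essentially the same as the paper's: both lift the positive unit tangent vector of $S^1$ through the submersion $f$ to obtain a nowhere-vanishing transverse field along the fiber. The only difference is that you make the phrase ``pull back the tangent vector'' precise by fixing a Riemannian metric and taking the unique preimage in the orthogonal complement of $T_p\no$, whereas the paper leaves that splitting implicit.
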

\begin{proof}
Consider a non-zero tangent vector $v$ pointing in the positive direction at a point $\theta \in S^1$.
One can pull back the tangent vector $v$ to a nowhere vanishing vector field over $f^{-1}(\theta) = \no$ because $f$ is a fibration, ie a submersion.
The pulled back vector field defines a relative orientation for $\no$ in $N$.
\end{proof}

\paragraph{Orientable manifolds} Now let $M$ be an orientable 3-manifold, and let $S$ be an orientable embedded surface.  If $S$ is relatively oriented with respect to $M$, then a choice of orientation on $S$ determines an orientation on $M$ and vice versa.
We also need to define the notion of \emph{incompressible surfaces} to state our version of Poincar\'e duality.

\paragraph{Incompressible surfaces}
  Let $S$ be a surface with positive genus embedded in a $3$-manifold $M$.
  The surface $S$ is said to be \emph{incompressible} if there does not exist an embedded disc $D$ in $M$ such that $D$ intersects $S$ transversely and $D \cap S = \partial D$.
  The following result of Thurston demonstrates the link between incompressible surfaces and fibers of fibrations.

\begin{thm}[Theorem 4 of \cite{thurston1986norm}]
  \label{thm:Thur2}
Let $M$ be an oriented 3-manifold that fibers over $S^1$.  Let $S$ be an incompressible surface embedded in $M$.  If $S$ is homologous to a fiber, then $S$ is isotopic to the fiber.
\end{thm}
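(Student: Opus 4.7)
The plan is to reduce the theorem to the classification of incompressible surfaces in a product $3$-manifold by passing to the infinite cyclic cover associated to the fibration $\pi\colon M\to S^1$ with fiber $F$. Let $p\colon \wt{M}\to M$ be the cover pulled back from the universal cover $\RR\to S^1$. Writing $M$ as a mapping torus of its monodromy $\phi$ yields a homeomorphism $\wt{M}\cong F\times\RR$ under which the generating deck transformation acts as $(x,t)\mapsto (\phi(x),t+1)$.

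First I would show that $S$ lifts to a compact embedded copy $\wt{S}\subset \wt{M}$. Since $M$ and $S$ are both orientable, $S$ is two-sided: any loop $\gamma\subset S$ admits a push-off $\gamma'$ disjoint from $S$ and homologous to $\gamma$ in $M$, so the algebraic intersection $\gamma\cdot S$ vanishes. Using the hypothesis $[S]=[F]$ in $H_2(M;\ZZ)$, we also get $\gamma\cdot F=0$, i.e.\ $\pi_\ast[\gamma]=0\in\ZZ$. Hence $\pi_1(S)$ lies in $\ker\pi_\ast$, which is exactly the subgroup corresponding to the cover, so $S$ lifts; the lift $\wt{S}$ is compact because $p$ restricts to a homeomorphism onto $S$.

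Next I would apply the classical classification theorem (due to Waldhausen) that any closed, two-sided, incompressible surface in $F\times\RR$ is isotopic to a level surface $F\times\{t_0\}$. Incompressibility lifts through covers, so $\wt{S}$ is incompressible in $\wt{M}$, and the classification applies, producing an isotopy from $\wt{S}$ to $F\times\{t_0\}$ whose trace is a compact product region $\Sigma\times[0,1]\subset\wt{M}$.

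The final step is to descend the isotopy to $M$, and I anticipate this is the main technical obstacle: one must ensure the compact product region between $\wt{S}$ and $F\times\{t_0\}$ has pairwise disjoint images under the deck group, so that $p$ restricts to an embedding on a neighbourhood of it; the level-preserving isotopy in this region then descends to an ambient isotopy of $M$ carrying $S$ to $F$. A subsidiary concern is verifying the hypotheses of the classification theorem, in particular irreducibility of $F\times\RR$, which holds provided $F$ is not a sphere; the sphere-fiber case can be handled by elementary arguments separately.
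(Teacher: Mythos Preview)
The paper does not give its own proof of this statement: it is quoted verbatim as Theorem~4 of Thurston's norm paper and used as a black box in the proof of Theorem~\ref{thm:strong-duality}. So there is no in-paper argument to compare against.

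That said, your sketch is essentially the standard argument (and indeed the one Thurston gives): pass to the infinite cyclic cover $\wt{M}\cong F\times\RR$, show that the homological hypothesis forces $S$ to lift, invoke Waldhausen's classification of two-sided incompressible surfaces in a product, and then push the isotopy back down. The lifting step is correct as you wrote it. The point you flag as the ``main technical obstacle'' is genuine but not hard: since $\wt{S}$ is compact it lies in some slab $F\times[-N,N]$, and the Waldhausen isotopy can be taken to be supported in such a slab; choosing a fiber $F\times\{t_0\}$ with $t_0$ large enough that the slab embeds under $p$ lets you descend. Your caveat about the sphere-fiber case is also well placed (and in the context of this paper irrelevant, since all fibers have negative Euler characteristic).
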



In the remainder of the section, we will be working with a non-orientable 3-manifold $N$ and an embedded non-orientable surface $\no$.   Let $\wt{N}$ and the covering map $p:\wt{N}\rightarrow N$ be the orientation double covering space of $N$.  Let $\wt{\no}$ be the preimage of $\no$ under $p$.  Let $\iota:\wt{N}\rightarrow\wt{N}$ be the orientation-reversing deck transformation of $p$.  We will initiate $N$ and $\no$ in each result below, but we surpress the initiation of the orientation double cover.

\begin{thm}[Poincar\'e Duality for non-orientable 3-manifolds]
  \label{thm:strong-duality}
  Let $N$ be a compact non-orientable $3$-manifold, and let $\no$ be a relatively oriented incompressible surface embedded in $N$.
  Then there exists $[\alpha] \in H^1(N; \ZZ)$ such that the pullback of $[\alpha]$ to $\wt{N}$ is the Poincar\'e dual of $\wt{\no}$ in $\wt{N}$.
  If $[\alpha]$ has a $1$-form representative $\alpha$ that vanishes nowhere on $N$, then $\no$ is homeomorphic to $f_{\alpha}^{-1}(\theta)$ for all $\theta \in S^1$.
\end{thm}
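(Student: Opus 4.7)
The plan is to reduce to the orientable case using the orientation double cover $p: \wt{N} \to N$ with orientation-reversing deck involution $\iota$, and then to transport Thurston's orientable results back down to $N$. First I would verify that $\wt{\no} = p^{-1}(\no)$ is an orientable embedded surface in $\wt{N}$: the nowhere-vanishing transverse vector field witnessing the relative orientation of $\no$ pulls back to a nowhere-vanishing transverse vector field on $\wt{\no}$, and combined with the ambient orientation of $\wt{N}$ this determines an orientation on $\wt{\no}$. Applying Poincar\'e duality for the orientable $3$-manifold $\wt{N}$ (Theorem~\ref{thm:orientable-poincare-duality}) then produces a class $[\wt{\alpha}] \in H^1(\wt{N};\ZZ)$ Poincar\'e dual to $[\wt{\no}] \in H_2(\wt{N};\ZZ)$.

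Next I would show that $[\wt{\alpha}]$ is $\iota^*$-invariant and therefore descends to $N$. The involution $\iota$ reverses the orientation of $\wt{N}$; since $\iota$ preserves the pulled-back transverse vector field, it also acts as $-1$ on the homology class $[\wt{\no}]$ (either by reversing the orientation of a connected $\wt{\no}$ when $\no$ is non-orientable, or by swapping two oppositely oriented components when $\no$ is orientable). These two sign changes cancel in the defining identity $\int_{\wt{N}} \wt{\alpha} \wedge \omega = \int_{\wt{\no}} \omega$, giving $\iota^* [\wt{\alpha}] = [\wt{\alpha}]$. By Lemma~\ref{lem:injective} applied integrally -- or, more directly, by observing that the signed intersection number against the relatively oriented surface $\no$ is a well-defined homomorphism $\pi_1(N) \to \ZZ$ -- there exists $[\alpha] \in H^1(N;\ZZ)$ with $p^*[\alpha] = [\wt{\alpha}]$.

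For the second assertion, suppose $[\alpha]$ has a nowhere-vanishing $1$-form representative $\alpha$. Then $f_{\alpha}: N \to S^1$ is a submersion from a compact manifold, hence a fiber bundle by Ehresmann's theorem, and the composition $f_{\wt{\alpha}} = f_{\alpha} \circ p$ is likewise a fiber bundle with fiber $p^{-1}(f_{\alpha}^{-1}(\theta))$. This fiber is Poincar\'e dual to $[\wt{\alpha}]$ and therefore homologous to $\wt{\no}$ in $\wt{N}$. Incompressibility of $\no$ passes to $\wt{\no}$, since the inclusion $\pi_1(\wt{\no}) \to \pi_1(\wt{N})$ factors the injection $\pi_1(\wt{\no}) \hookrightarrow \pi_1(\no) \hookrightarrow \pi_1(N)$. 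Theorem~\ref{thm:Thur2} applied in $\wt{N}$ then shows $\wt{\no}$ is isotopic to $p^{-1}(f_{\alpha}^{-1}(\theta))$, hence homeomorphic to it; comparing Euler characteristics and orientability types under the free $\iota$-quotient yields $\no \cong f_{\alpha}^{-1}(\theta)$. The main obstacle is the $\iota^*$-invariance computation together with the integral-coefficient descent in the second step, since Lemma~\ref{lem:injective} as stated uses real coefficients; the intersection-number construction of $[\alpha]$ is the cleanest way to sidestep this point.
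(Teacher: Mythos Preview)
Your proposal is correct and follows the same architecture as the paper's proof: lift to the orientation double cover, establish $\iota^*$-invariance of the Poincar\'e dual of $\wt{\no}$ (this is the paper's Lemma~\ref{lem:PD1}), descend to an integral class on $N$, lift incompressibility (Lemma~\ref{lem:lift-of-incompressible}), apply Theorem~\ref{thm:Thur2} upstairs, and then push the homeomorphism down through the free $\iota$-quotient.

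The one substantive difference is in the integral descent step. The paper proves this as Lemma~\ref{lem:PD2} by a path-lifting argument: for a loop $\gamma$ in $N$, either its lift to $\wt{N}$ is closed (so $\int_{\wt{\gamma}}\wt{\alpha}\in\ZZ$ is immediate), or the two path-lifts concatenate to a closed curve on which $\iota^*$-invariance forces the integral to be twice that over a single arc, and a parity count of intersections with $\wt{\no}$ shows this total is even. Your alternative---defining $[\alpha]\in H^1(N;\ZZ)$ directly as the signed intersection number with the relatively oriented $\no$---is more direct and avoids the case analysis; it works because the relative orientation gives a well-defined local sign at each transverse crossing, and $H^1(N;\ZZ)=\mathrm{Hom}(H_1(N;\ZZ),\ZZ)$. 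For the final step, your Euler-characteristic-plus-orientability argument and the paper's ``equivalent $2$-fold coverings'' argument amount to the same appeal to the classification of closed surfaces; both are adequate here since connectedness of $\wt{\no}$ (when $\no$ is non-orientable) forces connectedness of the fiber downstairs.
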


We will refer to the 1-form $\alpha$ given in Theorem~\ref{thm:strong-duality} as the {\it Poincar\'e dual} of the non-orientable surface $\no$.  Before proving Theorem~\ref{thm:strong-duality}, we need three lemmas.

\begin{lem}
  \label{lem:PD1}
  Let $N$ be a non-orientable 3-manifold.
  Let $\no$ be a relatively oriented embedded surface in $N$, and let $\wt{\no}=p^{-1}(\no)$ in $\wt{N}$.
  Then the Poincar\'e dual to $[\wt{\no}]$ is $\iota^{\ast}$-invariant.
\end{lem}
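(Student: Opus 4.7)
The plan is to show that two orientation reversals cancel: one from $\iota$ reversing the orientation of $\wt{N}$, the other from $\iota$ reversing the induced orientation on $\wt{\no}$.

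First, I would fix the orientation on $\wt{\no}$ induced by the canonical orientation on the orientation double cover $\wt{N}$ together with the pullback of the relative orientation on $\no$; this gives a well-defined fundamental class $[\wt{\no}] \in H_2(\wt{N}; \ZZ)$ and hence a Poincar\'e dual class in $H^1(\wt{N}; \ZZ)$ represented by a closed $1$-form $\alpha$. Concretely, the relative orientation is a nowhere-vanishing transverse vector field $v$ on $\no$; since $p$ is a local diffeomorphism, $v$ pulls back to a nowhere-vanishing transverse field $\wt{v}$ on $\wt{\no}$, and a basis $(e_1,e_2)$ of $T\wt{\no}$ is declared positive if and only if $(\wt{v},e_1,e_2)$ is positive in $T\wt{N}$.

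The central step is to verify that $\iota$ reverses the orientation on $\wt{\no}$. Because $p \circ \iota = p$, the chain rule gives $dp \circ d\iota = dp$, and since $dp$ is a fiberwise isomorphism this forces $d\iota(\wt{v}_{\wt{x}}) = \wt{v}_{\iota(\wt{x})}$, i.e., $\wt{v}$ is $\iota$-invariant. Because $\iota$ reverses the orientation on $\wt{N}$ yet fixes the transverse vector field, the induced orientation on $\wt{\no}$ is reversed by $\iota$. Equivalently, $\iota_{\ast}[\wt{\no}] = -[\wt{\no}]$ in $H_2(\wt{N};\ZZ)$.

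Finally, I would conclude $[\iota^\ast \alpha] = [\alpha]$ by pairing against closed $2$-forms. For any closed $2$-form $\omega$ on $\wt{N}$,
\begin{align*}
\int_{\wt{N}} \iota^\ast \alpha \wedge \omega
&= \int_{\wt{N}} \iota^\ast\bigl(\alpha \wedge (\iota^{-1})^\ast \omega\bigr)
= -\int_{\wt{N}} \alpha \wedge (\iota^{-1})^\ast \omega \\
&= -\int_{\wt{\no}} (\iota^{-1})^\ast \omega
= \int_{\wt{\no}} \omega
= \int_{\wt{N}} \alpha \wedge \omega,
\end{align*}
where the second equality uses orientation-reversal of $\iota$ on $\wt{N}$, the third uses that $\alpha$ is Poincar\'e dual to $[\wt{\no}]$, and the fourth uses orientation-reversal of $\iota$ on $\wt{\no}$. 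By non-degeneracy of the Poincar\'e pairing on the orientable manifold $\wt{N}$, this forces $[\iota^\ast \alpha] = [\alpha]$.

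The only subtle point is the orientation bookkeeping in the middle step; once the invariance of $\wt{v}$ under $\iota$ is established, the remainder is a routine change-of-variables computation, with the two sign flips canceling exactly.
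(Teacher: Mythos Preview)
Your proof is correct and follows essentially the same approach as the paper: you show $\iota$ preserves the lifted transverse field $\wt{v}$ (hence reverses the induced orientation on $\wt{\no}$, giving $\iota_\ast[\wt{\no}]=-[\wt{\no}]$), then run a change-of-variables integral computation against an arbitrary closed $2$-form to conclude $[\iota^\ast\alpha]=[\alpha]$ via non-degeneracy of the pairing. The only cosmetic difference is packaging: the paper computes $\int_{\iota_\ast\wt{\no}}\omega$ two ways and compares, while you manipulate $\int_{\wt{N}}\iota^\ast\alpha\wedge\omega$ directly; the two sign cancellations (one from $\iota$ on $\wt{N}$, one from $\iota$ on $\wt{\no}$) are identical in both arguments.
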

\begin{proof}
  A positive vector field on $\no$ that is transverse to its tangent plane in $N$ lifts to a relative orientation of $\wt{\no}$ in $\wt{N}$.
  Since $\wt{\no}$ and $\wt{N}$ are orientable, the relative orientation of $\wt{\no}$ defines an orientation of $\wt{\no}$, and thus the homology class $[\wt{\no}]$ in $H_2(\wt{N};\RR)$ is well-defined.

  Next we show that $\iota$ reverses the orientation of $\wt{\no}$.  To do so, we first observe that because $\no$ is relatively oriented in $N$, the outward pointing transverse vector field on ${\no}$ must lift to an outward pointing transverse vector field on $\wt{\no}$.  In particular, for any outward pointing vector $\wt{v}$ on $\no$, the vector $\iota(\wt{v})$ is also outward pointing.

  Lift an outward pointing transverse vector field on $\no$ to an outward pointing transverse vector field $\wt{V}$ on $\wt{\no}$.  Let $(v_1, v_2, v_3)$ be a local frame for some point in $\wt{\no}$ such that
  $v_3$ is in $\wt{V}$.
  Since $\iota$ reverses the orientation of $\wt{N}$ but preserves the direction of $v_3$, $\iota$ must reverse the orientation of the pair $(v_1, v_2)$.
  In particular, that means $\iota$ reverses the orientation of $\wt{\no}$.

  Therefore $[\wt{\no}]$ is in the $(-1)$-eigenspace of the $\iota_{\ast}$ action on $H_2(\wt{N}; \RR)$.
  Let the cohomology class $[\wt{\alpha}]$ be the the Poincar\'e dual to $[\wt{\no}]$.
  Let $\wt{\alpha}$ be a representative $1$-form $\wt{\alpha}$ of $[\wt{\alpha}]$ (that need not be $\iota^{\ast}$-invariant).
  We use the fact that $\iota^2= \mathrm{id}$ in the first and third equalities:
  \begin{align*}
    \int_{\iota_{\ast}\wt{\no}} \omega &= \int_{\wt{\no}} \iota^{\ast}\omega &&\text{(By a change of variables)} \\
                                     &= \int_{\wt{N}} \wt{\alpha} \wedge \iota^{\ast} \omega &&\text{(Poincar\'e duality)} \\
                                     &=\int_{\wt{N}} \iota^{\ast} \left( \iota^{\ast}\wt{\alpha} \wedge \omega \right) \\
    &= \int_{\wt{N}} - \left( \iota^{\ast} \wt{\alpha} \wedge \omega \right) &&\text{($\iota$ is orientation reversing)}.
  \end{align*}
  Because $\iota_{\ast}[\wt{\no}] = -[\wt{\no}]$, we have the following:
  \begin{align*}
    \int_{\iota_{\ast}\wt{\no}} \omega &= - \int_{\wt{\no}} \omega \\
                              &= - \int_{\wt{N}} \wt{\alpha} \wedge \omega.
  \end{align*}
  Since $$\int_{\wt{N}}\wt{\alpha}\wedge\omega=\int_{\wt{N}}\iota^\ast\wt{\alpha}\wedge\omega$$ for all $\omega$, it follows that $\wt{\alpha}$ and $\iota^{\ast}\wt{\alpha}$ differ by an exact form, and therefore the cohomology class $[\wt{\alpha}]$ is
  $\iota^{\ast}$-invariant.
\end{proof}

As above, we will denote the Poincar\'e dual to $[\wt{\no}]$ by $[\wt{\alpha}]$.  The class $[\wt{\alpha}]$ is an $\iota^{\ast}$-invariant element of $H^1(\wt{N}; \ZZ)$, but it is not clear that $[\wt{\alpha}]$ is the pullback of an element of $H^1(N; \ZZ)$ under $p$.
In the next lemma, we show that is indeed the case.
\begin{lem}
  \label{lem:PD2}
Let $N$ be a non-orientable 3-manifold.  Let $[\wt{\alpha}]\in H^1(\wt{N}, Z)$ and let $\wt{S}$ be the Poincar\'e dual of $[\wt{\alpha}]$ in $\wt{N}$.  There exists $[\alpha] \in H^1(N; \ZZ)$ such that $\wt{\alpha} = p^{\ast} \alpha$.
\end{lem}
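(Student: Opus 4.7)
The plan is to bypass the obstruction to dividing a real cohomology class by $2$ and instead construct $[\alpha]$ directly from the geometry of $\no$ in $N$. The hypothesis that $[\wt{\alpha}]$ is Poincar\'e dual to $\wt{\no} = p^{-1}(\no)$, together with the relative orientation of $\no$ in $N$ used in Lemma~\ref{lem:PD1}, gives a well-defined $\ZZ$-valued signed intersection with $\no$ for loops in $N$, and this is exactly what I will package into an integral cohomology class downstairs.

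First I will define a homomorphism $\phi \colon H_{1}(N;\ZZ) \to \ZZ$ by algebraic intersection with $\no$. For any loop $\gamma$ in $N$ transverse to $\no$, I assign to each intersection point the sign $+1$ if the tangent vector of $\gamma$ agrees with the chosen transverse vector field on $\no$ and $-1$ otherwise, and set $\gamma \cdot \no$ to be the sum of these signs. A generic singular $2$-chain whose boundary is the difference of two homologous loops meets $\no$ transversely in a compact oriented $1$-manifold whose oriented boundary recovers the difference of the two signed counts, so $\phi$ descends to $H_{1}(N;\ZZ)$. Because $H_{0}(N;\ZZ)$ is free, the universal coefficient theorem identifies $H^{1}(N;\ZZ)$ with $\mathrm{Hom}(H_{1}(N;\ZZ),\ZZ)$, and $\phi$ corresponds under this isomorphism to a distinguished class $[\alpha] \in H^{1}(N;\ZZ)$.

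Next I will verify that $p^{\ast}[\alpha] = [\wt{\alpha}]$ by showing that both sides induce the same homomorphism on $H_{1}(\wt{N};\ZZ)$. The class $p^{\ast}[\alpha]$ corresponds to $\phi \circ p_{\ast}$, whose value on $[\wt{\gamma}]$ is $p\wt{\gamma} \cdot \no$. The class $[\wt{\alpha}]$, being Poincar\'e dual to $\wt{\no}$ in the orientable manifold $\wt{N}$, corresponds to $[\wt{\gamma}] \mapsto \wt{\gamma} \cdot \wt{\no}$. Each intersection of $\wt{\gamma}$ with $\wt{\no}$ projects to a unique intersection of $p\wt{\gamma}$ with $\no$, and since the relative orientation of $\wt{\no}$ is by construction the $p$-lift of that of $\no$, signs are preserved and the two intersection numbers are equal. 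Universal coefficients then gives $p^{\ast}[\alpha] = [\wt{\alpha}]$ in $H^{1}(\wt{N};\ZZ)$.

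The main obstacle is purely orientation bookkeeping: $\no$ itself may carry no intrinsic orientation, so signs at crossings are only meaningful because of the relative orientation, and this is exactly what makes $\wt{\no}$ carry a genuine orientation that reverses under $\iota$ (placing $[\wt{\alpha}]$ in the $\iota^{\ast}$-invariant subspace, as in Lemma~\ref{lem:PD1}). Once this data is tracked carefully, the integrality of $[\alpha]$ is automatic from the construction as a homomorphism into $\ZZ$, and the comparison with $[\wt{\alpha}]$ reduces to a short intersection-theoretic computation.
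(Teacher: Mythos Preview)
Your proof is correct and takes a genuinely different route from the paper's.

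The paper argues by integration and a parity count upstairs: to show that $[\wt{\alpha}]$ descends to an integral class on $N$, it checks that for every simple closed curve $\gamma$ in $N$ the integral of $\wt{\alpha}$ over any path lift of $\gamma$ is an integer. When the lift is a closed loop this is immediate; when the two lifts are arcs, their concatenation $\wt{\gamma}$ is an $\iota$-invariant loop, the $\iota^{\ast}$-invariance from Lemma~\ref{lem:PD1} forces the two arc integrals to be equal, and the total $\displaystyle\int_{\wt{\gamma}}\wt{\alpha}$ is the signed intersection with $\wt{\no}$, which is even because intersection points come in $\iota$-orbits of size two.

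You instead build $[\alpha]$ directly downstairs as the algebraic-intersection homomorphism with $\no$, using the relative orientation to assign signs without ever needing an orientation of $N$ or of $\no$. Integrality is then automatic from the construction, and the identification $p^{\ast}[\alpha]=[\wt{\alpha}]$ reduces to the observation that intersections of a loop in $\wt{N}$ with $\wt{\no}=p^{-1}(\no)$ project bijectively, with matching signs, to intersections of its image with $\no$. Your approach is more elementary and sidesteps both the closed-versus-arc case analysis and the parity argument; the paper's approach has the virtue of staying inside the de~Rham framework used throughout the section and of making the role of $\iota^{\ast}$-invariance explicit.
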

\begin{proof}
 It will suffice to show that for any simple closed curve $\gamma$ in $N$, the integral of $\wt{\alpha}$ along any path lift of $\gamma$ is an integer.  Let $x_0\in N$ be a base point of $\gamma$.  Note that $\gamma$ has two (path) lifts $\wt{\gamma}_1,\wt{\gamma}_2$ under $p$ in $\wt{N}$, one based at each element of $p^{-1}(x_0)$.  Either $\wt{\gamma}_1,\wt{\gamma}_2$ are both simple closed curves based at the each of the two preimages $p^{-1}(x_0)$ or $\wt{\gamma}_1,\wt{\gamma}_2$ are both arcs between the two points of $p^{-1}(x_0)$.
  If each lift $\wt{\gamma}_1,\wt{\gamma}_2$ of $\gamma$ is a closed curve in $\wt{N}$, the integral $\displaystyle\int_{\wt{\gamma}_i}\wt{\alpha}$ will be an integer since $[\wt{\alpha}] \in H^1(\wt{N}; \ZZ)$ for $i=1,2$.

 If each lift $\wt{\gamma}_1,\wt{\gamma}_2$ of $\gamma$ is an arc between the two preimages of $p^{-1}(x_0)$, we consider the simple closed curve $\wt{\gamma}=\wt{\gamma}_1\cup\wt{\gamma}_2$.  We note that $\iota(\wt{\gamma})=\wt{\gamma}$.
  By Lemma \ref{lem:PD1}, $\wt{\alpha}$ is $\iota^{\ast}$-invariant.  Therefore we have that $ \displaystyle\int_{\wt{\gamma}_1} \wt{\alpha} = \displaystyle\int_{\wt{\gamma}_2} \wt{\alpha}.$  Therefore $$\int_{\wt{\gamma}}\wt{\alpha}=2\int_{\wt{\gamma}_1}\wt{\alpha}.$$ It will suffice to show that $\displaystyle\int_{\wt{\gamma}}\wt{\alpha}$ is an even integer.
  Without loss of generality, we can assume all intersections of the simple closed curve $\wt{\gamma}$ with the surface $\wt{\os}$ are transverse.
  Since $\wt{\alpha}$ is a representative of the Poincar\'e dual to $[\wt{\os}]$, the integral of $\wt{\alpha}$ along $\wt{\gamma}$ is the signed intersection number of $\wt{\gamma}$ with $\wt{\os}$.
  The intersection number must be even, for if $\wt{\gamma}$ and $\wt{\os}$ intersect at a point $y$, then they also intersect at $\iota(y)$. This proves the lemma.
\end{proof}

The last lemma we need is the claim that lifts of incompressible surfaces are incompressible.
\begin{lem}
  \label{lem:lift-of-incompressible}
  Let $N$ be a non-orientable 3-manifold.  If $\no$ is a relatively oriented incompressible surface in $N$, then $\wt{\no}=p^{-1}(\no)$ is incompressible in $\wt{N}$.
\end{lem}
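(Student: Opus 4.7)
The plan is to reduce the claim to an injection of fundamental groups via the loop theorem (Dehn's lemma): for a two-sided properly embedded surface $S$ in a $3$-manifold $M$, incompressibility of $S$ is equivalent to the inclusion inducing an injection $\pi_1(S)\hookrightarrow\pi_1(M)$. I would apply this characterization to $\no$ in $N$ and then transfer the injectivity along the covering $p$ to obtain incompressibility of $\wt{\no}$ in $\wt{N}$.

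First I would verify the two-sidedness needed to use this characterization. The relative orientation of $\no$ is a non-vanishing transverse vector field on $\no$, equivalently a non-vanishing section of the normal line bundle of $\no$ in $N$, so $\no$ is two-sided; incompressibility then yields an injection $\pi_1(\no)\hookrightarrow\pi_1(N)$. The restriction $p\colon\wt{\no}\to\no$ is a covering of degree at most $2$ (since $\wt{\no}=p^{-1}(\no)$ is saturated under $p$), and $p\colon\wt{N}\to N$ is a double cover, so both maps induce injections on $\pi_1$ of each component. The commutative square of inclusions and covering maps forces the composition $\pi_1(\wt{\no})\to\pi_1(\wt{N})\to\pi_1(N)$ to agree with the composition $\pi_1(\wt{\no})\hookrightarrow\pi_1(\no)\hookrightarrow\pi_1(N)$, which is injective. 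Hence the inclusion-induced map $\pi_1(\wt{\no})\to\pi_1(\wt{N})$ is itself injective.

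To conclude I would invoke the loop theorem in the reverse direction. The positive transverse vector field on $\no$ lifts, exactly as in the proof of Lemma \ref{lem:PD1}, to a nowhere-vanishing transverse vector field on $\wt{\no}$, so $\wt{\no}$ is two-sided in $\wt{N}$. Two-sidedness of $\wt{\no}$ together with the $\pi_1$-injection established above then lets the loop theorem conclude that $\wt{\no}$ is incompressible in $\wt{N}$.

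The main piece of bookkeeping will be justifying the $\pi_1$-injectivity characterization of incompressibility in the paper's setup: this characterization relies on the standard convention that the boundary circle of a compressing disk is essential in the surface, so one has to confirm the paper's notion of incompressibility matches that convention on two-sided surfaces of positive genus. Once this equivalence is in hand, the proof reduces to the diagram chase above together with the lifted two-sidedness from Lemma \ref{lem:PD1}.
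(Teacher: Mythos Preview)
Your proposal is correct and follows essentially the same route as the paper: both arguments pass through the $\pi_1$-injectivity characterization of incompressibility, use the commutative square of inclusions and covering maps to transfer injectivity from $\pi_1(\no)\hookrightarrow\pi_1(N)$ to $\pi_1(\wt{\no})\to\pi_1(\wt{N})$, and then convert back to incompressibility of $\wt{\no}$. The paper's version is terser and does not spell out the two-sidedness hypothesis needed for the loop theorem, so your added care about relative orientation giving two-sidedness (and its lift via Lemma~\ref{lem:PD1}) is a welcome clarification rather than a departure.
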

\begin{proof}
  Because $\no$ is incompressible in $N$, the map on fundamental groups induced by inclusion $\no\rightarrow N$ is injective.
  Since $p_\ast:\pi_1(\wt{N})\rightarrow\pi_1(N)$ is injective, the induced map $\pi_1(\wt{\no}) \to \pi_1(\wt{N})$ must also be injective.  An injective induced map on fundamental groups is equivalent to the orientable surface $\wt{\no}$ being incompressible.
\end{proof}

We now have everything we need to finish proving Theorem \ref{thm:strong-duality}.
\begin{proof}[Proof of Theorem \ref{thm:strong-duality}]
  Let $\wt{\no}=p^{-1}(\no).$
  The relative orientation of $\wt{\no}$ determines a homology class $[\wt{\no}]\in H_2(N;\ZZ)$.  Let the $1$-form $\wt{\alpha}$ be the Poincar\'e dual to $[\wt{\no}]$ in $\wt{N}$.
  By Lemma \ref{lem:PD2}, there exists a 1-form $\alpha\in H^1(N;\ZZ)$ such that $\wt{\alpha}=p^\ast\alpha$.

We define the map $f_\alpha:N\rightarrow S^1$ according to equation (\ref{form:map}).  Because $\alpha$ is non-vanishing, $f_{\alpha}$ has full rank everywhere.  Therefore $f_\alpha$ is a fibration.
The map $f_{\alpha} \circ p$ is a lift of $f_{\alpha}$ to $\wt{N}$ under $p$, and is therefore also a fibration.
  By Lemma \ref{lem:lift-of-incompressible}, $\wt{\no}$ is incompressible.
It follows from the orientable version of Poincar\'e duality that $\wt{\no}$ and $p^{-1}(f_{\alpha}^{-1}(\theta))$ are homologous surfaces in $\wt{N}$.    Theorem \ref{thm:Thur2} then tells us $\wt{\no}$ must be isotopic to a fiber of $f_{\alpha} \circ p$.
  The restriction of $p$ to the homeomorphic surfaces $\wt{\no}$ and $p^{-1}(f_{\alpha}^{-1})(\theta)$ determines two equivalent 2-fold covering maps $\wt{\no}\rightarrow \no$ and  $p^{-1}(f^{-1}_{\alpha}(\theta))\rightarrow f^{-1}_\alpha(\theta)$.  Therefore the image surfaces $\no$ and $f_{\alpha}^{-1}(\theta)$ must also be homeomorphic.
\end{proof}

  Note that the above proof does not tell us that $\no$ and $f_{\alpha}^{-1}(\theta)$ are isotopic.  Isotopy of the fibers of $N$ requires the isotopy between $\wt{\no}$ and $p^{-1}(f^{-1}_\alpha(\theta))$ to be $\iota^{\ast}$-equivariant.  However, the theorem is sufficient for our application.

We conclude the section with a non-orientable version of Theorem \ref{thm:Thur1}.
\begin{thm}
  \label{thm:classifying-fibrations}
  Let $N$ be a compact non-orientable $3$-manifold, and let $\mathcal{F}$ be the elements of $H^1(N; \ZZ)$ corresponding to fibrations of $N$ over $S^1$.
  \begin{enumerate}[(i)]
  \item Elements of $\mathcal{F}$ are in one-to-one correspondence with (non-zero) lattice points (ie points of $H^1(N; \ZZ)$) inside some union of cones over open faces of the unit ball in the Thurston norm.
  \item Let $\no$ be relatively oriented surface in $N$ that transverse to the suspension flow associated to some fibration $f: N \to S^1$.  Let $[\alpha]$ be the Poincar\'e dual $[\alpha]$ to $\no$.  Then $[\alpha]$ lies in the closure of the cone in $H^1(N; \RR)$ containing the $1$-form corresponding to $f$.
  \end{enumerate}
\end{thm}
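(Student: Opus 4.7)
The plan is to deduce this theorem from its orientable counterpart (Theorem~\ref{thm:Thur1}) applied to the orientation double cover $\wt{N}$, exploiting the definition of the Thurston norm $x$ on $H^1(N;\RR)$ as the pullback of the norm $\wt{x}$ on $H^1(\wt{N};\RR)$ via $p^*$. Under the identification of $H^1(N;\RR)$ with the $\iota^*$-invariant subspace $V \subset H^1(\wt{N};\RR)$ provided by Lemma~\ref{lem:injective}, the unit ball $B_x$ of $x$ is the slice $B_{\wt{x}} \cap V$. Since $\iota$ is a self-homeomorphism of $\wt{N}$, the induced map $\iota^*$ preserves $\wt{x}$ and hence permutes the open faces of $B_{\wt{x}}$; moreover it sends fibered faces to fibered faces, since fiberedness is invariant under self-homeomorphisms. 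An open face $\wt{F}$ of $B_{\wt{x}}$ meets $V$ only if $\iota^*(\wt{F}) = \wt{F}$ (since open faces are disjoint), and I would declare the fibered faces of $B_x$ to be the nonempty intersections $\wt{F} \cap V$ with $\wt{F}$ an open fibered face of $B_{\wt{x}}$.

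For the easy direction of (i), given a fibration $f : N \to S^1$, the composition $f \circ p : \wt{N} \to S^1$ is a fibration of $\wt{N}$ representing $p^*[f]$, so Theorem~\ref{thm:Thur1}(i) places $p^*[f]$ inside an open fibered cone of $\wt{x}$, whence $[f]$ lies in the corresponding fibered cone of $x$. For the converse, suppose $[\alpha] \in H^1(N;\ZZ)$ satisfies $p^*[\alpha]$ lying in an open fibered cone of $\wt{x}$. Thurston's theorem yields a fibration $\wt{g} : \wt{N} \to S^1$ with $[\wt{g}] = p^*[\alpha]$, and the key step is to arrange $\wt{g}$ to be $\iota$-equivariant (i.e.\ $\wt{g}\circ\iota = \wt{g}$), so that it descends to a fibration $g : N \to S^1$ with $[g] = [\alpha]$. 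For this, take a fiber $\wt{S} = \wt{g}^{-1}(\theta)$; the proof of Lemma~\ref{lem:PD1} gives $\iota_*[\wt{S}] = -[\wt{S}]$, so $\iota(\wt{S})$ with reversed orientation is homologous to $\wt{S}$. Theorem~\ref{thm:Thur2} then furnishes an isotopy from $\iota(\wt{S})$ to $\wt{S}$, and an equivariant isotopy-extension argument allows us to choose $\wt{g}$ so that every fiber is $\iota$-invariant as a set; this forces $\wt{g}\circ\iota = \wt{g}$. The descended map $g$ is a submersion, because $p$ is a local diffeomorphism and $\wt{g} = g \circ p$ is, so $g$ is the required fibration.

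Part (ii) is a direct transfer from the orientable case. If $\no$ is a relatively oriented surface transverse to the suspension flow of a fibration $f : N \to S^1$, then $\wt{\no} = p^{-1}(\no)$ is an oriented surface transverse to the suspension flow of the lifted fibration $f \circ p : \wt{N} \to S^1$. By Theorem~\ref{thm:Thur1}(ii), the Poincar\'e dual of $\wt{\no}$ lies in the closure of the fibered cone of $\wt{x}$ containing $[f \circ p] = p^*[f]$. By Theorem~\ref{thm:strong-duality} this Poincar\'e dual equals $p^*[\alpha]$, where $[\alpha]$ is the Poincar\'e dual of $\no$, so pulling back via the isomorphism of Lemma~\ref{lem:injective} places $[\alpha]$ in the closure of the fibered cone of $x$ containing $[f]$.

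The principal obstacle is the $\iota$-equivariance step in the converse direction of (i): upgrading an isotopy $\iota(\wt{S}) \simeq \wt{S}$ to a genuine pointwise equivariant identification of fibers requires a careful equivariant isotopy-extension argument, relative to the $\iota$-action on $\wt{N}$ and compatible with the fibration structure; this is where essentially all of the work beyond invoking Theorems~\ref{thm:Thur1} and~\ref{thm:Thur2} is concentrated.
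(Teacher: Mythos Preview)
Your strategy coincides with the paper's: pass to the orientation double cover $\wt{N}$, invoke Theorem~\ref{thm:Thur1} there, and take the fibered cones for $N$ to be the $(p^{\ast})$-preimages of the fibered cones of $\wt{N}$. Your argument for (ii) is essentially identical to the paper's.

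For (i) you are in fact more careful than the paper. The paper disposes of the hard direction---``if $p^{\ast}[\alpha]$ lies in a fibered cone of $\wt{N}$ then $[\alpha]$ is represented by a fibration of $N$''---in a single sentence whose justification (``since the pullback of $f$ to $H^1(\wt{N};\ZZ)$ corresponds to a fibration of $\wt N$'') presupposes the very fibration $f:N\to S^1$ whose existence is at issue; the paper effectively asserts this direction without argument. You correctly isolate this as the substantive step and propose a descent via Theorem~\ref{thm:Thur2} together with an equivariant isotopy. Two remarks on your sketch. First, your appeal to Lemma~\ref{lem:PD1} for $\iota_{\ast}[\wt S]=-[\wt S]$ is slightly misplaced, since that lemma treats preimages $p^{-1}(\no)$ rather than an arbitrary fiber $\wt S$ of a fibration of $\wt N$; the desired identity nonetheless follows directly from the fact that $[\wt S]$ is Poincar\'e dual to the $\iota^{\ast}$-invariant class $p^{\ast}[\alpha]$ and that $\iota$ reverses orientation. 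Second, even after arranging every fiber of $\wt g$ to be $\iota$-invariant as a set, you only obtain $\wt g\circ\iota=\sigma\circ\wt g$ for some orientation-preserving involution $\sigma$ of $S^1$, and the half-rotation must still be excluded before you can conclude $\wt g\circ\iota=\wt g$ and descend; so the equivariant step needs a little more than you indicate, though your identification of where the real content lies is on target.
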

\begin{proof}
For (i), we observe that by Theorem~\ref{thm:Thur1} the fibrations of $\wt{N}$ are in one-to-one correspondence with lattice points inside a union of cones over open faces of the unit ball in $H_2(\wt{N};\RR)$.  Let $\wt{\mathcal{K}}$ be the union of cones in $H_2(\wt{N};\RR)$.
 By Poincar\'e duality, $\wt{\mathcal{K}}$ is in one-to-one correspondence to a union of cones in $H^1(\wt{N};\RR)$, which we will call $\wt{\mathcal{K}}^\ast$.

  Because $H^1(N;\RR)$ is isomorphic to a subspace of $H^1(\wt{N};\RR)$, we can construct a union of cones in $H^1(N; \RR)$ that map to the intersection of $p^\ast(H^1(N; \RR))$ with $\wt{\mathcal{K}}^\ast$.
  Indeed, every lattice point in $\wt{\mathcal{K}}^\ast$ corresponds to a fibration $f:N\to S^1$, since the pullback of $f$ to $H^1(\wt{N}; \ZZ)$ corresponds to a fibration of $\wt{N}$.
  Conversely, every fibration of $f:N\rightarrow S^1$ must correspond to an element of $\wt{\mathcal{K}}^\ast$, since the composition $f\circ p$ is a fibration of $\wt{N}\rightarrow S^1$.

  For (ii), assume that the surface $\no$ is transverse to the suspension flow of a fibration $f:N\rightarrow S^1$. Then $\wt{\no}$ is transverse to the suspension flow $p \circ f:\wt{N}\rightarrow S^1$.  Let $\wt{\alpha}$ be the pullback of $\alpha$ under $p$.  Then $\wt{\alpha}$ is the Poincar\'e dual of $\wt{\no}$.  By \autoref{thm:Thur1}, the 1-form $\wt{\alpha}$ lies in the closure of a component of $\wt{\mathcal{K}}^\ast$ that contains the 1-form corresponding to $f\circ p$.  Let $\wt{K}$ be this component.  Let $K\subset H^1(N;\RR)$ be the preimage of $\wt{K}$ under $p^\ast$.  The cone $K$ contains both $\alpha$ and the 1-form corresponding to $f$, as desired.
\end{proof}

\subsection{Oriented sums}
\label{sec:oriented-sums}

The next step in studying embedded non-orientable surfaces will be to describe \emph{oriented sums}.  Let $M$ be a 3-manifold.
The oriented sum of two embedded surfaces in $M$ is additive in both the Euler characteristic and $H^1(M;\RR)$.
This operation is well-known in the case of orientable $3$-manifolds (along with orientable embedded surfaces), but we will sketch the relevant details.
We then extend the construction to relatively oriented embedded surfaces.

\paragraph{Oriented sum for oriented manifolds}
Let $M$ be an orientable manifold.
Let $S$ and $S'$ be orientable embedded surfaces in $M$.
Assume that $S$ and $S'$ intersect transversally.
Thus $S \cap S'$ is a disjoint union of copies of $S^1$.
For each component of $S\cap S'$, take a tubular neighborhood that has cross section as in \autoref{fig:cross-section}.
\begin{figure}
  \centering
    \def\svgscale{0.2}
\begingroup%
  \makeatletter%
  \providecommand\color[2][]{%
    \errmessage{(Inkscape) Color is used for the text in Inkscape, but the package 'color.sty' is not loaded}%
    \renewcommand\color[2][]{}%
  }%
  \providecommand\transparent[1]{%
    \errmessage{(Inkscape) Transparency is used (non-zero) for the text in Inkscape, but the package 'transparent.sty' is not loaded}%
    \renewcommand\transparent[1]{}%
  }%
  \providecommand\rotatebox[2]{#2}%
  \newcommand*\fsize{\dimexpr\f@size pt\relax}%
  \newcommand*\lineheight[1]{\fontsize{\fsize}{#1\fsize}\selectfont}%
  \ifx\svgwidth\undefined%
    \setlength{\unitlength}{407.39696593bp}%
    \ifx\svgscale\undefined%
      \relax%
    \else%
      \setlength{\unitlength}{\unitlength * \real{\svgscale}}%
    \fi%
  \else%
    \setlength{\unitlength}{\svgwidth}%
  \fi%
  \global\let\svgwidth\undefined%
  \global\let\svgscale\undefined%
  \makeatother%
  \begin{picture}(1,0.80278474)%
    \lineheight{1}%
    \setlength\tabcolsep{0pt}%
    \put(0,0){\includegraphics[width=\unitlength,page=1]{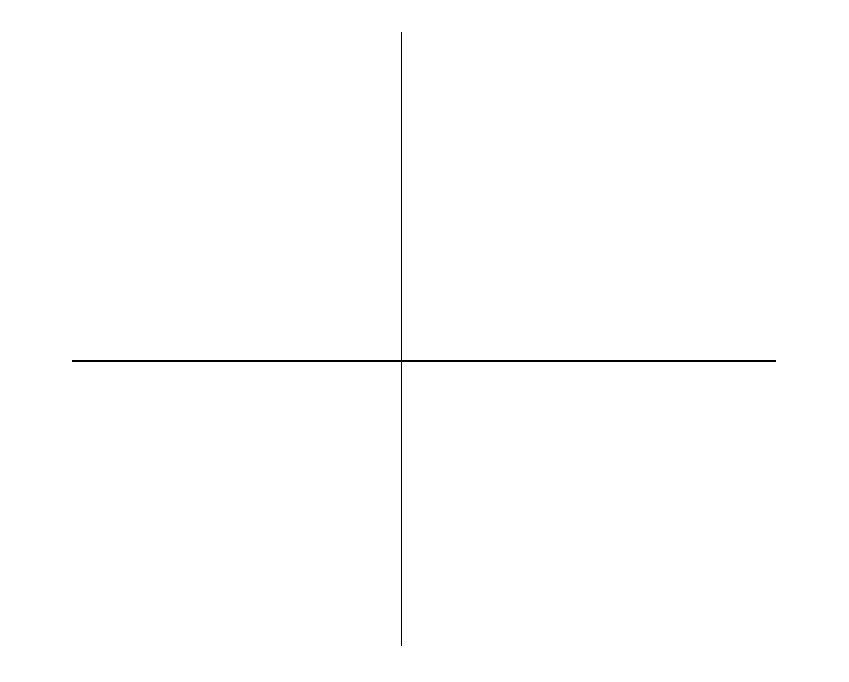}}%
    \put(0.94626714,0.32602052){\color[rgb]{0,0,0}\makebox(0,0)[lt]{\lineheight{1.25}\smash{\begin{tabular}[t]{l}$S$\end{tabular}}}}%
    \put(-0.04244498,0.32652206){\color[rgb]{0,0,0}\makebox(0,0)[lt]{\lineheight{1.25}\smash{\begin{tabular}[t]{l}$S$\end{tabular}}}}%
    \put(0.41724626,-0.07032913){\color[rgb]{0,0,0}\makebox(0,0)[lt]{\lineheight{1.25}\smash{\begin{tabular}[t]{l}$S'$\end{tabular}}}}%
    \put(0.41724626,0.78040563){\color[rgb]{0,0,0}\makebox(0,0)[lt]{\lineheight{1.25}\smash{\begin{tabular}[t]{l}$S'$\end{tabular}}}}%
  \end{picture}%
\endgroup%

  \caption{Cross section of intersection of $S$ and $S'$.}
  \label{fig:cross-section}
\end{figure}

We then perform a surgery on the leaves of $S$ and $S'$ so that the outward pointing normal vector fields match as in \autoref{fig:surgery}.
\begin{figure}[b]
  \centering
    \def\svgscale{0.3}
    \import{./images/}{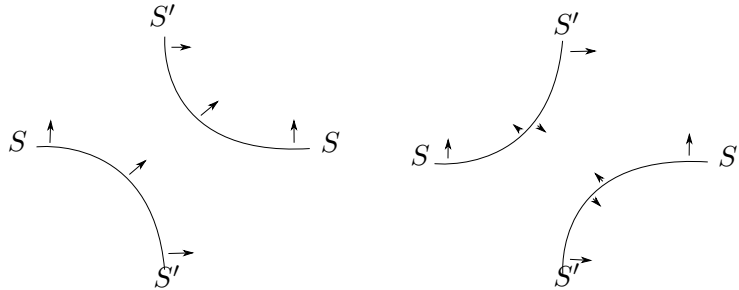}

  \caption{On the left, the normal vectors on $S$ and $S'$ are consistent. On the right, they are not.}
  \label{fig:surgery}
\end{figure}

By performing this surgery at all the intersections, we get a new submanifold $S''$ of $M$ (which may have multiple components).
This new submanifold $S''$ is called the {\it oriented sum} of $S$ and $S'$.
The operation of taking oriented sums is additive on Euler characteristic, as well as the homology classes (and thus the cohomology classes of their Poincar\'e duals):
\begin{align*}
  \chi(S'') &= \chi(S) + \chi(S') \\
  [S''] &= [S] + [S']
\end{align*}

\paragraph{Oriented sum for non-orientable manifolds} Let $N$ be a non-orientable 3-manifold and let $\no$ and $\no'$ be embedded surfaces in $N$ that are relatively oriented.
We define the oriented sum on $\no$ and $\no'$ as follows.
As above, let $p:\wt{N}\rightarrow N$ be the orientation double cover and let $\iota$ be the orientation reversing deck transformation of $\wt{N}$.
Let $\wt{\no}=p^{-1}(\no)$ and $\wt{\no}'=p^{-1}(\no')$, which are embedded oriented surfaces in $\wt{N}$.
The oriented sum of $\no$ and $\no'$ is the image under $p$ of the oriented sum of $\wt{\no}$ and $\wt{\no}'$.

To see that the operation is well-defined, we recall that $\iota$ preserves the relative orientation of $\wt{\no}$ and $\wt{\no}'$.  Therefore $\iota$ leaves the outward normal vector fields on $\wt{\no}$ and $\wt{\no}'$ invariant (see the proof of Lemma \ref{lem:PD1}).
Thus a leaf $L$ of $\wt{\no}$ is surgered with a leaf of $L'$ of $\wt{\no}'$ if and only if $\iota(L)$ and $\iota(L')$ are surgered.
Therefore surgery factors through $p$ and $[\no]+[\no']$ is well-defined for non-orientable surfaces.

\begin{example}
  \label{ex:oriented-sum}
   Let $\gamma$ be a component of $\no\cap \no'$ and $\wt{\gamma}_1$ and $\wt{\gamma}_2$ be the path lifts of $\gamma$.
  One possible orientation of $\wt{S}$ and $\wt{S}'$ is given in  \autoref{fig:consistency}.  The outward pointing normal vectors to $\wt{\no}$ and $\wt{\no'}$ determine which leaves are glued together along $\wt{\gamma}_1$ and $\wt{\gamma}_2$.

\begin{figure}
  \centering
    \def\svgscale{0.4}
    \import{./images/}{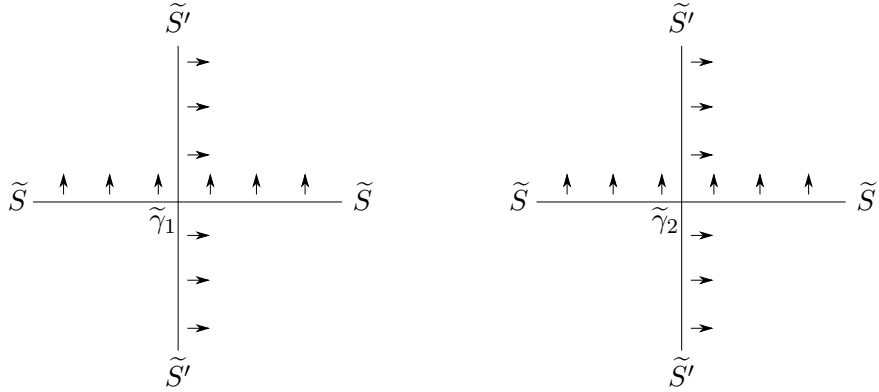}

  \caption{Neighborhoods of $\wt{\gamma}_1$ and $\wt{\gamma}_2$, with the outward pointing normal vector field.}
  \label{fig:consistency}
\end{figure}

To preserve the normal vector field, glue the left $\wt{\no}$ leaf to the bottom $\wt{\no'}$ leaf near $\wt{\gamma}_1$ and $\wt{\gamma}_2$.
Since $\iota(\wt{\gamma}_1)=\wt{\gamma_2}$, the outward pointing normal vector fields point the same (relative) directions.
\end{example}

\paragraph{Additivity}
By the consistency of the oriented sum in $N$ and $\wt{N}$, it easily follows that the oriented sum is additive in Euler characteristic, as well as in terms of Poincar\'e dual, since the Poincar\'e dual was also defined by passing to the orientation double cover.

\section{Mapping classes with small stretch factors}
\label{sec:mapping-classes-with}

In this section, we provide a strategy to compute pseudo-Anosov homeomorphisms with small stretch factors.

\subsection{Mapping class groups of non-orientable surfaces}
\label{sec:backgr-mapp-class}
Let $\no$ be a non-orientable surface and let $\wt{\no}$ and the covering map $p:\wt{\no}\rightarrow \no$ be its orientation double covering space.
Every homeomorphism $\varphi: \no \to \no$, has a unique orientation-preserving lift $\wt{\varphi}: \wt{\no} \to \wt{\no}$.

A consequence is that lifting homeomorphisms induces a monomorphism between homeomorphisms of $\no$ and orientation-preserving homeomorphisms of $\wt{\no}$.
Every homotopy of $\no$ lifts to a homotopy of $\wt{\no}$.
Therefore there is an inclusion from the mapping class group of $\no$ to the (orientation-preserving) mapping class group of $\wt{\no}$.
This inclusion also respects the Nielsen-Thurston classification of mapping classes, both qualitatively, and quantitatively, as the following proposition shows.
\begin{prop}
  \label{prop:2}
  Let $\varphi:\no\rightarrow\no$ be a homeomorphism and let $\wt{\varphi}:\wt{\no}\rightarrow\wt{\no}$ be the orientation-preserving lift of $\varphi$.  Then:
  \begin{enumerate}[(i)]
  \item $\varphi$ is periodic if and only if $\wt{\varphi}$ is periodic,
  \item $\varphi$ is reducible if and only if $\wt{\varphi}$ is reducible, and
  \item $\varphi$ is pseudo-Anosov if and only if $\wt{\varphi}$ is pseudo-Anosov.  Moreover if $\varphi$ has stretch factor $\lambda$, then $\wt{\varphi}$ also has stretch factor $\lambda$.
  \end{enumerate}
\end{prop}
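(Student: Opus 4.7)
The central observation I would establish first is that the orientation-reversing deck transformation $\iota: \wt{\no} \to \wt{\no}$ commutes with $\wt{\varphi}$. Indeed, $\iota \wt{\varphi} \iota^{-1}$ is a composition of two orientation-reversing maps with one orientation-preserving map, hence orientation preserving, and it projects to $\varphi$ under $p$; by uniqueness of the orientation-preserving lift, $\iota \wt{\varphi} = \wt{\varphi} \iota$. This commutation will be the mechanism by which data on $\wt{\no}$ descends to $\no$. I would also note that the lifting correspondence is injective on the level of mapping classes (as claimed in the paragraph preceding the proposition), so we can freely go back and forth between representatives.

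For (i), the forward direction is clear: if $\varphi^n$ is isotopic to the identity on $\no$, lift the isotopy to obtain that $\wt{\varphi}^n$ is isotopic to a lift of the identity, i.e.\ either $\mathrm{id}$ or $\iota$, and since $\wt{\varphi}^n$ is orientation preserving it must be isotopic to $\mathrm{id}$. For the converse, if $\wt{\varphi}^n \simeq \mathrm{id}$ on $\wt{\no}$, projecting the isotopy gives $\varphi^n \simeq \mathrm{id}$ on $\no$. For (ii), the forward direction follows by lifting an invariant essential multicurve $\mathcal{C} \subset \no$ to $p^{-1}(\mathcal{C}) \subset \wt{\no}$, which is $\wt{\varphi}$-invariant and essential because $p$ is a covering map. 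For the converse, suppose $\wt{\varphi}$ preserves an essential multicurve $\wt{\mathcal{C}}$. Using the commutation $\iota \wt{\varphi} = \wt{\varphi} \iota$, the multicurve $\iota(\wt{\mathcal{C}})$ is also $\wt{\varphi}$-invariant, so $\wt{\mathcal{C}} \cup \iota(\wt{\mathcal{C}})$ is both $\wt{\varphi}$-invariant and $\iota$-invariant. Its image $p(\wt{\mathcal{C}} \cup \iota(\wt{\mathcal{C}}))$ is then a multicurve in $\no$ preserved by $\varphi$, and I would check essentiality by showing that an inessential component would lift to an inessential component upstairs, contradicting essentiality of $\wt{\mathcal{C}}$.

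For (iii), the forward direction uses the pullback: given a transverse pair $(\mathcal{F}_s, \mathcal{F}_u)$ of measured foliations on $\no$ with $\varphi(\mathcal{F}_s) = \lambda^{-1} \mathcal{F}_s$ and $\varphi(\mathcal{F}_u) = \lambda \mathcal{F}_u$, the lifts $p^{-1}(\mathcal{F}_s)$ and $p^{-1}(\mathcal{F}_u)$ are transverse measured foliations on $\wt{\no}$, and since $p$ is a local diffeomorphism the transverse measures pull back with no rescaling, so $\wt{\varphi}$ stretches them by exactly $\lambda^{-1}$ and $\lambda$ respectively. Hence $\wt{\varphi}$ is pseudo-Anosov with stretch factor $\lambda$. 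For the converse, the cleanest route is the trichotomy of the Nielsen--Thurston classification for non-orientable surfaces (stated in this section): if $\wt{\varphi}$ is pseudo-Anosov, then it is neither periodic nor reducible, so by (i) and (ii) neither is $\varphi$, leaving pseudo-Anosov as the only possibility. The stretch factor equality then follows from the forward direction applied to $\varphi$.

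The main obstacle I anticipate is the subtlety in (ii) of guaranteeing that the projected multicurve on $\no$ is \emph{essential}, and in particular has no components that bound discs or are peripheral around punctures. One must argue that if a component $\gamma \subset p(\wt{\mathcal{C}} \cup \iota(\wt{\mathcal{C}}))$ bounded a disc or was peripheral in $\no$, then the lifts of that disc or peripheral annulus would witness some component of $\wt{\mathcal{C}}$ being inessential in $\wt{\no}$, contradicting our hypothesis; the lift of a peripheral curve may become non-peripheral if the puncture is unramified, but in that case both preimage components together would still bound in $\wt{\no}$, forcing at least one of them to be inessential. A careful case analysis here, using that an essential curve on $\no$ lifts to essential curves on $\wt{\no}$ (and conversely that a null-homotopic or peripheral lift forces its projection to also be null-homotopic or peripheral), is what needs attention.
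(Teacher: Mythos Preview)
Your approach differs from the paper's: the paper disposes of the type-preservation statements (i)--(iii) by citing Aramayona--Leininger--Souto \cite[Lemma~10]{aramayona2009injections}, and only gives a direct argument for the stretch-factor equality, via intersection numbers of short arcs with the lifted foliations. Your stretch-factor argument (pulling back the measured foliations along the local homeomorphism $p$) is essentially the same as theirs, just phrased at the level of the foliations rather than tested against curves.

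Your hands-on treatment of (i)--(iii) is more self-contained, but there is a genuine gap in the converse of (ii). You form $\wt{\mathcal{C}} \cup \iota(\wt{\mathcal{C}})$ and claim it is a $\wt{\varphi}$- and $\iota$-invariant multicurve whose projection reduces $\varphi$. The problem is that $\wt{\mathcal{C}}$ and $\iota(\wt{\mathcal{C}})$ may have essential intersection, so the union need not be a multicurve at all, and its projection need not consist of simple closed curves. The obstacle you flag (essentiality downstairs) is actually the easier part; disjointness is the real issue. Two standard repairs: either replace $\wt{\mathcal{C}}$ by the canonical reduction system of $\wt{\varphi}$, which is characteristic and hence $\iota$-invariant up to isotopy (then realize it $\iota$-equivariantly and project), or simply run the trichotomy argument you already use for (iii) in reverse --- if $\varphi$ were irreducible and aperiodic it would be pseudo-Anosov, whence $\wt{\varphi}$ would be pseudo-Anosov by your forward (iii), contradicting reducibility.

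A minor point on (i): ``projecting the isotopy'' from $\wt{\varphi}^n \simeq \mathrm{id}$ is not literally valid unless the isotopy is $\iota$-equivariant. Since you have already invoked injectivity of $\Mod(\no) \to \Mod(\wt{\no})$, just use that instead: $\varphi^n$ lifts to $\wt{\varphi}^n \simeq \mathrm{id}$, so $\varphi^n$ is trivial in $\Mod(\no)$.
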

\begin{proof}
  The fact that the map from $\Mod(\no)$ to $\Mod(\wt{\no})$ is type-preserving follows from Aramayona--Leininger--Souto \cite[Lemma 10]{aramayona2009injections} (while the statement of the Lemma is for orientable surfaces, the argument, which we will skip, is identical for non-orientable surfaces).

  Suppose now that $\varphi:\no\rightarrow\no$ is a psuedo-Anosov homeomorphism with stretch factor $\lambda$ and stable and unstable foliations $\mathcal{F}_s$ and $\mathcal{F}_u$ respectively.
  Let $\wt{\mathcal{F}_s}$ and $\wt{\mathcal{F}_u}$ denote the lifts of the stable and unstable foliations to the orientation double cover.  Let $\gamma$ be a simple closed curve in $\wt{\no}$.
  We need to show that the following identities hold for all $\gamma$ (see \cite[Expos\'e 5]{FLP} for the definition of intersection number with measured foliations; the fact that these identities suffice follows from \cite[Lemma 9.15]{FLP}):
  \begin{align}
      \label{eq:unstable-foliation}
      i(\gamma, \wt{\varphi}(\wt{\mathcal{F}_u})) &= \lambda \cdot i(\gamma, \wt{\mathcal{F}_u}) \\
      \label{eq:stable-foliation}
      i(\gamma, \wt{\varphi}(\wt{\mathcal{F}_s})) &= \frac{1}{\lambda} \cdot i(\gamma, \wt{\mathcal{F}_s}).
  \end{align}

  To see that (\ref{eq:unstable-foliation}) holds, we partition $\gamma$ into short arcs $\{\gamma_i\}$ such that the restriction of the covering map $p$ to a neighborhood of each arc is a homeomorphism.
  Then we have:
  \begin{align}
  \label{eq:push1}
    i(\gamma_i, \wt{\mathcal{F}_u}) &= i(p(\gamma_i), \mathcal{F}_u) \\
  \label{eq:push2}
    i(\gamma_i, \wt{\varphi}(\wt{\mathcal{F}_u})) &= i(p(\gamma_i), \varphi(\mathcal{F}_u)).
  \end{align}
  Since we know that $\mathcal{F}_u$ is the unstable foliation for $\varphi$ with stretch factor $\lambda$, we can compute the ratio of the right hand side of \eqref{eq:push1} and \eqref{eq:push2}:
  \begin{align}
      \label{eq:ratio}
      i(p(\gamma_i), \varphi(\mathcal{F}_u)) = \lambda \cdot i(p(\gamma_i), \mathcal{F}_u).
  \end{align}
  Combining \eqref{eq:push1}, \eqref{eq:push2}, and \eqref{eq:ratio}, and summing over all $\gamma_i$ gives us that \eqref{eq:unstable-foliation} holds. A similar argument also proves that \eqref{eq:stable-foliation} holds.
\end{proof}

\subsection{Constructing pseudo-Anosov maps using oriented sums}
\label{sec:constr-psuedo-anos}
The goal of this section is to prove that the stretch factor of any pseudo-Anosov homeomorphism provides an asymptotic upper bound for the minimum stretch factor.  We do this in Proposition \ref{prop:asymptotic}.

\begin{prop}\label{prop:asymptotic}
Let $\no_g$ be a non-orientable surface of genus $g$ and let $\varphi:\no_g\rightarrow \no_g$ be a pseudo-Anosov homeomorphism with stretch factor $\lambda$.  Let $N_\varphi$ be the mapping torus of $\no_g$ by $\varphi$.  Let $\no_{g'}$ be a genus $3$ non-orientable relatively orientable surface embedded in $N_\varphi$ that is transverse to the suspension flow associated to $\varphi$.  Then for all $k\in\ZZ^+$, there is a pseudo-Anosov homeomorphism of the oriented sum $\no_{g}+k\no_{g'}$ with stretch factor at most $\lambda$.
\end{prop}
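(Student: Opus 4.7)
The plan is to realize the oriented sum $\no_g + k\no_{g'}$ as the fiber of a new fibration of the mapping torus $N_\varphi$ over $S^1$, and then bound the stretch factor of its monodromy using the Fried--Matsumoto theory of entropy on a fibered cone. Throughout, I use that $N_\varphi$ is hyperbolic since $\varphi$ is pseudo-Anosov.

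First I would set up Poincar\'e duals. By Lemma \ref{lem:fibers-relatively-oriented} the fiber $\no_g$ is relatively oriented in $N_\varphi$, and pulling back the positive flow direction gives a relative orientation on $\no_{g'}$ as well (since it is transverse to the suspension flow). Theorem \ref{thm:strong-duality} then supplies Poincar\'e duals $[\alpha], [\alpha'] \in H^1(N_\varphi; \ZZ)$. By Theorem \ref{thm:classifying-fibrations}(i), $[\alpha]$ lies in the interior of the cone $K$ over a fibered face of the Thurston-norm unit ball in $H^1(N_\varphi; \RR)$; by Theorem \ref{thm:classifying-fibrations}(ii), $[\alpha'] \in \overline{K}$. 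For any $k \in \ZZ^+$, the integer class $[\alpha] + k[\alpha']$ is again a lattice point of the open cone $K$, because the sum of an interior point and a closure point of a convex cone is interior. By Theorem \ref{thm:classifying-fibrations}(i) this class is dual to the fiber of a fibration $f_k\colon N_\varphi \to S^1$, and the additivity of oriented sums under Poincar\'e duality established in Section \ref{sec:oriented-sums}, combined with incompressibility and Theorem \ref{thm:Thur2} applied in the orientation double cover, identifies this fiber with $\no_g + k\no_{g'}$. Thurston's hyperbolization then makes the monodromy $\varphi_k$ of $f_k$ pseudo-Anosov.

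For the stretch factor bound, I would invoke the theorem of Fried, as extended by Matsumoto and Agol--Leininger--Margalit: the function $g(\beta) \coloneqq 1/\log\lambda(\beta)$ extends continuously to a concave and positively homogeneous-of-degree-one function on the open cone $K$. This transfers to the non-orientable setting by lifting to $\wt{N_\varphi}$, applying the classical statement on the corresponding fibered cone in $H^1(\wt{N_\varphi}; \RR)$, and descending via the cone correspondence built in the proof of Theorem \ref{thm:classifying-fibrations}, with Proposition \ref{prop:2}(iii) ensuring that stretch factors are preserved under lifting. Concavity combined with degree-one homogeneity yields superadditivity on $K$, giving
\begin{equation*}
  g([\alpha] + k[\alpha']) \;\geq\; g([\alpha]) + k\cdot g([\alpha']) \;\geq\; g([\alpha]) \;=\; \frac{1}{\log \lambda},
\end{equation*}
where $g([\alpha']) \geq 0$ since $\log \lambda(\beta) \geq 0$ on $K$; if $[\alpha'] \in \partial K$ I first approximate it from the interior and use continuity. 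Taking reciprocals gives $\log \lambda(\varphi_k) \leq \log \lambda$, whence $\lambda(\varphi_k) \leq \lambda$, as required.

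The main obstacle is transporting the Fried--Matsumoto continuity-convexity-homogeneity package to the non-orientable setting. I expect this is routine given the machinery in Section \ref{sec:thur-norm-non-orientable}: the classical statement applies in the orientable double cover, and Lemma \ref{lem:injective} together with Proposition \ref{prop:2}(iii) lets me descend the relevant normalized-entropy function to $K \subset H^1(N_\varphi; \RR)$ without losing concavity or degree-one homogeneity. A secondary technical point is verifying that the fiber of $f_k$ really is $\no_g + k\no_{g'}$ rather than merely a surface in the same cohomology class; this is where the incompressibility hypothesis on $\no_{g'}$ is used, in conjunction with Theorem \ref{thm:Thur2} applied upstairs.
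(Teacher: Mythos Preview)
Your proposal is correct and follows essentially the same route as the paper: realize $\no_g + k\no_{g'}$ as the fiber of the fibration dual to $[\alpha]+k[\alpha']$ (this is the content of the paper's Proposition~\ref{thm:oriented-sum}, which packages exactly the incompressibility/norm-linearity argument you sketch), then pass to the orientation double cover and apply Fried--Matsumoto on the restricted cone. The only real difference is in how you obtain the entropy inequality: the paper cites the Agol--Leininger--Margalit inequality $h(u+v)<h(u)$ (Theorem~\ref{thm:alm}) as a black box, whereas you unpack it by observing that $g=1/h$ is concave and degree-one homogeneous, hence superadditive, and then use $g(v)\geq 0$. That is precisely how Theorem~\ref{thm:alm} is proved, so the two arguments coincide up to whether the monotonicity is quoted or derived inline.
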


Our strategy for proving Proposition \ref{prop:asymptotic} is to find fibrations of $N_{\varphi}$ over $S^1$ that have fiber $\no_g+k\no_{g'}$.  We then apply a special case of Thurston's hyperbolization theorem, which says that the mapping torus of an orientable surface $S$ by a homeomorphism $\varphi$ is hyperbolic if and only if $\varphi$ is pseudo-Anosov \cite[Theorem 0.1]{thurston_hyp}.  In particular, Thurston's Theorem implies that if $M=M_\varphi$ fibers over $S^1$ in two ways, either both monodromies are pseudo-Anosov or neither monodromy is pseudo-Anosov.  Finally, we adapt theorems of Fried and Matsumoto (Theorem \ref{thm:fm}) and Agol--Leininger--Margalit (Theorem \ref{thm:alm}) to work for mapping tori with non-orientable fibers.

\medskip
We will use the following two facts for orientable surfaces and hyperbolic 3-manifolds that fiber over $S^1$:
\begin{enumerate}
 \item A Thurston norm-minimizing surface $S$ is incompressible \cite[Lemma 5.7]{calegari2007foliations}.
\item  The fiber of any fibration over $S^1$ minimizes the Thurston norm in its homology class \cite[Corollary 2]{thurston1986norm}.
\end{enumerate}

\begin{prop}
  \label{thm:oriented-sum}
  Let $\no'$ be a genus $3$ non-orientable relatively orientable surface embedded in $N$ that is transverse to the suspension flow associated to $\varphi$.
  Let $\alpha$ be the Poincar\'e dual of $\no$ and $\alpha'$ the Poincar\'e dual of $\no'$.
  If the oriented sum of $\no$ and $\no'$ is connected, then $\no + \no'$ is homeomorphic to the fiber of the fibration given by $\alpha + \alpha'$.
\end{prop}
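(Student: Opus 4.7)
The plan is to apply Theorem~\ref{thm:strong-duality} to the surface $\no + \no'$, which requires verifying three hypotheses: $\no + \no'$ is relatively oriented, it is incompressible, and its Poincar\'e dual admits a nowhere-vanishing $1$-form representative.

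First I establish the cone structure. Since $\no$ is the fiber of $N = N_\varphi \to S^1$, its Poincar\'e dual $\alpha$ is a lattice point in the interior of some fibered cone $K$ in $H^1(N;\RR)$ by Theorem~\ref{thm:classifying-fibrations}(i). Because $\no'$ is transverse to the suspension flow of $\varphi$, part (ii) of Theorem~\ref{thm:classifying-fibrations} places $\alpha' \in \overline{K}$. Hence $\alpha + \alpha'$ is again a lattice point in the interior of $K$ (the sum of an interior point and a closure point of a convex cone), so by (i) it corresponds to a fibration $f \colon N \to S^1$ and admits a nowhere-vanishing closed $1$-form representative $\beta$. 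This supplies the third hypothesis.

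Next I would check that $\no + \no'$, which is connected by assumption, is relatively oriented in $N$ --- immediate from the surgery construction in Section~\ref{sec:oriented-sums}, since the outward-pointing normal vector fields are matched across the surgery --- and has Poincar\'e dual $\alpha + \alpha'$, by the additivity of oriented sums on cohomology stated at the end of Section~\ref{sec:oriented-sums}.

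The main obstacle is proving incompressibility of $\no + \no'$. The strategy is to pass to the orientation double cover: by the definition of the oriented sum in the non-orientable setting, $p^{-1}(\no + \no') = \wt{\no} + \wt{\no'}$, computed as the usual oriented sum in $\wt{N}$. By Lemma~\ref{lem:lift-of-incompressible}, both $\wt{\no}$ and $\wt{\no'}$ are incompressible, and their Poincar\'e duals $\wt{\alpha} = p^{\ast}\alpha$ and $\wt{\alpha'} = p^{\ast}\alpha'$ lie in the closure of a fibered cone $\wt{K}$ of $\wt{N}$. Because the Thurston norm is linear on a fibered cone and equals $\chi_-$ on a norm-minimizing representative there (Theorem~\ref{thm:Thur1}), the additivity of $\chi_-$ under oriented sums forces $\wt{\no} + \wt{\no'}$ to realize the Thurston norm in its homology class in $\wt{N}$, hence to be incompressible. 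I would then descend incompressibility to $\no + \no'$ via a $\pi_1$-injectivity argument: any hypothetical compressing disc in $N$ would lift (possibly after doubling its boundary loop) to a compressing disc in $\wt{N}$, using that the maps $\pi_1(\wt{\no} + \wt{\no'}) \to \pi_1(\no + \no')$ and $\pi_1(\wt{N}) \to \pi_1(N)$ induced by covering are injective, together with torsion-freeness of surface groups of positive genus.

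With the three hypotheses verified, Theorem~\ref{thm:strong-duality} then yields that $\no + \no'$ is homeomorphic to $f_\beta^{-1}(\theta)$, which is exactly the fiber of the fibration associated to $\alpha + \alpha'$.
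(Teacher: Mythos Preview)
Your proposal is correct and follows essentially the same route as the paper: place $\alpha'$ in the closure of the fibered cone containing $\alpha$ via Theorem~\ref{thm:classifying-fibrations}, use linearity of the Thurston norm on that cone together with additivity of $\chi_-$ under oriented sums to show the lift $p^{-1}(\no+\no')$ is norm-minimizing and hence incompressible, descend incompressibility, and then invoke Theorem~\ref{thm:strong-duality}. You are in fact more careful than the paper in two spots: you explicitly note that $\alpha+\alpha'$ lies in the \emph{open} cone (so it admits a nowhere-vanishing representative), and you supply an argument for why incompressibility of $p^{-1}(\no+\no')$ forces incompressibility of $\no+\no'$, which the paper simply asserts. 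One small cosmetic point: the parenthetical ``possibly after doubling its boundary loop'' is unnecessary, since a compressing disc is simply connected and therefore lifts directly to $\wt{N}$; the lifted boundary is essential in $p^{-1}(\no+\no')$ because covering maps are $\pi_1$-injective.
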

\begin{proof}
  We first need to show that $\no^{\prime}$ is incompressible to consider its Poincar\'e dual: this follows from the fact that the pre-image $\wt{\no^{\prime}}$ in the orientation double cover is a genus $2$ surface, and minimizes the Thurston norm in its homology class.
  If it did not minimize the Thurston norm in the homology class, then the norm minimizing surface in its homology class would have to be a torus or a sphere, but that would contradict the fact the $3$-manifold is the mapping torus of a pseudo-Anosov map. By Calegari \cite[Lemma 5.7]{calegari2007foliations}, we have that $\widetilde{\mathcal{N}'}$ incompressible, and therefore $\no^{\prime}$ is incompressible.

  Let $p:\wt{N}\rightarrow N$ be the orientation double cover of $N$.
  The surface $\no$ minimizes Thurston norm because it is a fiber of $f$.  Similarly, $p^{-1}(\no)$ also minimizes Thurston norm.  Thus the Thurston norm of $\alpha$ is $2\chi_-(\no)$.  Likewise, the Thurston norm of $\alpha'$ is $2\chi_-(\no')$.

By \autoref{thm:classifying-fibrations} (ii), $\alpha'$ lies in the same cone in $H^1(N;\ZZ)$ as $\alpha$.  The Thurston norm $x$ on $H^1(N;\ZZ)$ is a linear function on that cone.
 Since the Thurston norm is also linear on oriented sums of $\no$ and $\no'$, we have:
  \begin{align*}
    x(\alpha + \alpha') &= x(\alpha) + x(\alpha') \\
                        &= 2\chi_-(\no) + 2\chi_-(\no') \\
                        &= 2\chi_-(\no + \no').
  \end{align*}

  Because $2\chi_-(\no+\no')$ achieves the Thurston norm of $\alpha+\alpha'$, the preimage $p^{-1}(\no+\no')$ achieves the Thurston norm of the pullback of $\alpha+\alpha'$ under $p$.  Therefore $p^{-1}(\no+\no')$ is incompressible.  Then $\no+\no'$ is also incompressible.

  By Theorem \ref{thm:classifying-fibrations} (i), we have that $\alpha + \alpha'$ corresponds to some other fibration $f'':N\rightarrow S^1$.
  By Theorem \ref{thm:strong-duality}, the fiber of $f''$ must be homeomorphic to $\no + \no'$.
\end{proof}

In the proof of Proposition \ref{prop:asymptotic}, we will use Proposition \ref{thm:oriented-sum} along with a theorem of Thurston to obtain a pseudo-Anosov homeomorphism $\varphi_k$ of the surface of genus of genus $g+kg'$.  We the use Theorems \ref{thm:fm} and \ref{thm:alm} to obtain a upper bound on the stretch factor of $\varphi_k$.

\begin{thm}[Fried \cite{fried1982flow,fried1983transitive}, Matsumoto \cite{matsumoto1987topological}]
  \label{thm:fm}
  Let $M$ be an orientable hyperbolic $3$-manifold and let $\mathcal{K}$ be the union of cones in $H^1(M; \RR)$ whose lattice points correspond to fibrations over $S^1$.
  There exists a strictly convex function $h: \mathcal{K} \to \RR$ satisfying the following properties:
  \begin{enumerate}[(i)]
  \item For all $c > 0$ and $u \in \mathcal{K}$, $h(cu) =  \frac{1}{c}h(u)$,
  \item For every primitive lattice point $u \in \mathcal{K}$, $h(u) = \log(\lambda)$, where $\lambda$ is the
    stretch factor of the pseudo-Anosov map associated to this lattice point, and
  \item $h(u)$ goes to $\infty$ as $u$ approaches $\partial \mathcal{K}$.
  \end{enumerate}
\end{thm}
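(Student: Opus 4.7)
The plan is to follow Fried's classical dynamical construction. Each primitive lattice point $u \in \mathcal{K}$ corresponds to a fibration of $M$ over $S^1$, and by Thurston's hyperbolization theorem the monodromy $\varphi_u$ is pseudo-Anosov; I would define $h(u) \coloneqq \log \lambda(\varphi_u)$ at such points. Homogeneity (i) is then used to extend $h$ to all rational rays through primitive lattice points, and the definition is pushed to arbitrary $u \in \mathcal{K}$ by continuity, provided one can show such a continuous extension exists.

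The conceptual heart of the argument is the observation that all fibrations whose classes lie in a single fibered face of the Thurston norm ball arise as cross-sections of one fixed flow on $M$, namely the pseudo-Anosov suspension flow coming from any one of the monodromies. Moving $u$ within $\mathcal{K}$ amounts to choosing a different cross-section of this flow, and multiplying $u$ by $c>0$ amounts to reparametrizing the time variable by $1/c$. Since the topological entropy of a flow scales inversely under time reparametrization, and since $\log\lambda(\varphi_u)$ coincides with the topological entropy of the suspension flow in the normalization induced by the cross-section corresponding to $u$, both (i) and (ii) fall out of this identification at once.

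For strict convexity and the boundary blow-up (iii), the approach is to identify $1/h$ with an intrinsic invariant of the flow that is manifestly concave: namely, an infimum (suitably interpreted as a limit over longer and longer orbits) of $\langle u, [\gamma]\rangle / \mathrm{period}(\gamma)$ taken over closed orbits $\gamma$ of the flow. Such a function is concave and homogeneous of degree $1$ in $u$, strictly concave by the hyperbolicity of the flow and the abundance of closed orbits in distinct homology directions, and vanishes at $\partial\mathcal{K}$ precisely where the corresponding cross-section fails to be transverse to the flow. Inverting recovers strict convexity of $h$ and the blow-up statement.

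The main obstacle, and the step I would import from Fried and Matsumoto as a black box, is the claim that all fibrations whose Euler classes lie in a single fibered face share a common suspension flow up to reparametrization. Proving this requires a careful deformation argument in the space of transverse cross-sections of the flow and a verification that first-return maps of different cross-sections are topologically conjugate to reparametrizations of the original pseudo-Anosov. For the purposes of this paper we invoke the Fried–Matsumoto theorem as stated and move on to its application in the non-orientable setting.
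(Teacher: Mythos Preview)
The paper does not prove this theorem at all: it is stated as a result of Fried and Matsumoto and invoked as a black box, with no argument supplied. So there is no ``paper's own proof'' to compare against. Your sketch is a reasonable outline of Fried's original approach---defining $h$ via the entropy of the suspension flow, obtaining homogeneity from time-reparametrization, and getting convexity and boundary blow-up from the concavity of the minimal orbit-growth function---and you are right that the hard step (that all fibrations in a fibered face are cross-sections of a common flow) is precisely what Fried and Matsumoto supply. Since the paper itself treats Theorem~\ref{thm:fm} as a citation rather than something to be reproved, your last paragraph is already aligned with how the authors use it; the earlier paragraphs go further than the paper requires.
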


\begin{thm}[Agol-Leininger-Margalit]
  \label{thm:alm}
  Let $\mathcal{K}$ be a fibered cone for a mapping torus $M$ and let $\overline{\mathcal{K}}$ be its closure
  in $H^1(M;\RR)$. If $u \in \mathcal{K}$ and $v \in \overline{\mathcal{K}}$, then $h(u+v) < h(u)$.
\end{thm}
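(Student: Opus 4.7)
The plan is to deduce Theorem~\ref{thm:alm} from the two structural properties of the entropy function $h$ supplied by Theorem~\ref{thm:fm}: strict convexity on $\mathcal{K}$ and homogeneity of degree $-1$. Define the one-variable function
\[
\phi \colon [0,\infty) \to \RR, \qquad \phi(t) \coloneqq h(u + tv).
\]
Since $\mathcal{K}$ is an open convex cone, the standard fact that the half-open segment $[x,y)$ lies in an open convex set $C$ whenever $x \in C$ and $y \in \overline{C}$ gives $u + tv \in \mathcal{K}$ for every $t \geq 0$. Thus $\phi$ is finite and, assuming $v \neq 0$, strictly convex (the case $v = 0$ being vacuous). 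The goal reduces to showing $\phi(1) < \phi(0)$.

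First consider the case $v \in \mathcal{K}$. By homogeneity, $\phi(t) = t^{-1}\,h(u/t + v)$ for $t > 0$. As $t \to \infty$, $u/t + v$ converges to the interior point $v$, and continuity of $h$ on $\mathcal{K}$ forces $h(u/t + v) \to h(v) < \infty$, so $\phi(t) \to 0$. A strictly convex function on $[0,\infty)$ with $\phi(0) > 0$ and $\phi(t) \to 0$ at infinity cannot satisfy $\phi(a) \leq \phi(b)$ for any $0 \leq a < b$: strict convexity would then produce a positive right-slope at $b$, forcing $\phi(t) \to \infty$. Hence $\phi$ is strictly decreasing, and in particular $\phi(1) < \phi(0)$.

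Next suppose $v \in \overline{\mathcal{K}} \setminus \mathcal{K}$. Choose a sequence $v_n \in \mathcal{K}$ with $v_n \to v$. The interior case, applied to each $v_n$, gives $h(u + v_n) < h(u)$ and $h(u + 2v_n) < h(u)$. Since $u + v$ and $u + 2v$ both lie in the open set $\mathcal{K}$, on which $h$ is continuous, passing to the limit yields $h(u+v) \leq h(u)$ and $h(u+2v) \leq h(u)$. If equality $\phi(1) = \phi(0)$ held, strict convexity of $\phi$ would give $\phi(2) > 2\phi(1) - \phi(0) = \phi(0)$, contradicting $\phi(2) \leq \phi(0)$. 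Therefore $h(u+v) < h(u)$ in this case as well.

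The genuinely hard content is packaged inside Theorem~\ref{thm:fm} itself, where the construction and strict convexity of $h$ rest on substantial input from the Teichm\"uller polynomial and Fried's analysis of cross-sections to the suspension flow. Given those properties, Theorem~\ref{thm:alm} is a short convex-analytic consequence; notably, property (iii) of Theorem~\ref{thm:fm} (boundary blow-up of $h$) is not needed for the argument above.
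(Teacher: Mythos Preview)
The paper does not supply a proof of Theorem~\ref{thm:alm}; it is quoted from Agol--Leininger--Margalit and used as a black box. Your derivation from the properties of $h$ in Theorem~\ref{thm:fm} (strict convexity and degree~$-1$ homogeneity, together with the automatic continuity of a finite convex function on an open convex set in finite dimensions) is correct and is essentially the argument given in the original source. The two-case split---interior $v$ handled via $\phi(t)\to 0$ and strict convexity forcing monotonicity, boundary $v$ handled by approximation plus the three-point inequality $\phi(2)>2\phi(1)-\phi(0)$---is clean and complete.

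One small correction: the case $v=0$ is not ``vacuous.'' Since $0\in\overline{\mathcal{K}}$, the conclusion $h(u+0)<h(u)$ would actually be false. The theorem should be read with the implicit hypothesis $v\neq 0$ (which is how the paper applies it, with $v$ the Poincar\'e dual of a nontrivial surface); you should say so explicitly rather than labeling the degenerate case vacuous.
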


\begin{proof}[Proof of Proposition \ref{prop:asymptotic}]
The oriented sum $\mathcal{S}=\no_g+k\no_{g'}$ constructed in Proposition \ref{thm:oriented-sum} is a surface of genus $g+kg'$, and $\mathcal{S}$ is homeomorphic to a fiber of $N_\varphi$ given by $\alpha+k\alpha'$.  Let $\varphi_{k}:\mathcal{S}\rightarrow\mathcal{S}$ be the monodromy of $N_\varphi$ over $\mathcal{S}$.  By Thurston's theorem, $\varphi_k$ is pseudo-Anosov.  We claim that $\varphi_k$ has stretch factor at most $\lambda$.

Let $p:\wt{N}\rightarrow N_\varphi$ be the orientation double cover of $N_\varphi$. Let $h\mid_{N}$ be the restriction of $h$ to the pullback  $p^\ast(H^1(N_\varphi; \RR))$ in $H^1(\wt{N}; \RR)$.
The restriction $h\mid_N$ satisfies all the properties of Theorems \ref{thm:fm} and \ref{thm:alm}.

 Let $\wt{\varphi}$ be the orientation preserving lift of $\varphi$ to $p^{-1}(\no)$.  Since $\wt{\alpha}$ is the pullback of $\alpha$, the map $\wt{\varphi}$ is the pseudo-Anosov homeomorphism associated to $\wt{\alpha}$.  By Proposition \ref{prop:2}, the stretch factor of $\wt{\varphi}$ is $\lambda$.

Let $\mathcal{K}$ be the cone in $H^1(N_\varphi;\RR)$ that contains $\alpha$.  Since $\no_{g'}$ is transverse to the suspension flow in the direction of $\varphi$, we have that $\alpha'$ is in the closure of $\mathcal{K}$ in $H^1(N;\RR)$.  Let $\wt{\alpha}$ be the pullback of $\alpha$ under $p$ and let $\wt{\alpha}'$ be the pullback of $\alpha'$ under $p$.  Then $h\mid_N(\wt{\alpha}+\wt{\alpha}')<h\mid_N(\wt{\alpha})$.  By Theorem \ref{thm:fm}, $h(\wt{\alpha})$ is equal to the stretch factor of the pseudo-Anosov homeomorphism associated to $\wt{\alpha}$.  Therefore we have $h\mid_N(\wt{\alpha}+\wt{\alpha}')<\log(\lambda)$. It follows that the stretch factor of $\varphi_k$ is less than $\lambda$.
\end{proof}

\section{Minimal stretch factors for non-orientable surfaces with marked points}
\label{sec:application}

In this section we will use \autoref{thm:classifying-fibrations} and Proposition  \ref{prop:asymptotic} to adapt the methods of Yazdi \cite{yazdi2018pseudo} to non-orientable surfaces. We recall the statement of the main theorem:
\begin{manualtheorem}
  {\ref{thm:stretch1}}
Let $\no_{g,n}$ be a non-orientable surface of genus $g$ with $n$ punctures, and let $\ell_{g,n}'$ be the logarithm of
  the minimum stretch factor of the pseudo-Anosov mapping classes acting on $\no_{g,n}$.
  Then for any fixed $n \in \mathbb{N}$, there is a positive constant $B'_1 = B'_1(n)$ and $B'_2 = B'_2(n)$ such
  that for any $g \geq 3$,
  the quantity $\ell_{g,n}'$ satisfies the following inequalities:
  \begin{align*}
    \frac{B'_1}{g} \leq \ell'_{g,n} \leq \frac{B'_2}{g}.
  \end{align*}
\end{manualtheorem}

Observe that the lower bound for the non-orientable case follows easily from the lower bound for the orientable case.
Indeed, let $\varphi$ be a pseudo-Anosov map with the minimal stretch factor on $\no_{g,n}$. The orientation double cover of $\no_{g,n}$ is $\os_{G,2n}$, where $G = g-1$.  Note that in the non-orientable case we measure genus as the number of copies of the projective plane attached to $S^2$ via a connect sum and in the orientable case we measure genus as the number of copies of the torus attached to $S^2$ via a connected sum. Let $\wt{\varphi}:\os_{G, 2n}\to\os_{G,2n}$ be the orientation preserving lift of $\varphi$.
By Proposition \ref{prop:2}, $\wt{\varphi}$ has the same
stretch factor as $\varphi$. The logarithm of the former is bounded below by $\frac{B_1}{G}$ (where $B_1$ is given by Yazdi \cite{yazdi2018pseudo}), and thus the stretch factor of $\varphi$ is bounded
below as well. The more challenging part of the proof is showing that the upper bound holds.

We will closely follow Yazdi's construction, which proceeds in five steps, though we will reorder them for clarity.  In steps 1 and 2, we construct a family of psuedo-Anosov homeomorphisms
of $\no_{g_i,n}$, where $\{g_i\}$ is an unbounded increasing sequence. However the sequence $\{g_i\}$ does not contain all natural numbers.  In step 3 we give an upper bound to the stretch factor of the previously constructed homeomorphisms. In steps 4 and 5, we construct pseudo-Anosov maps on surfaces of genera that do not belong to the sequence $\{g_i\}$. It is in steps 4 and 5 that we uses
Thurston's fibered face theory. We have adapted  each of Yazdi's five steps to work for non-orientable surfaces.

\subsection*{Step 1: Constructing the surfaces}

We begin by defining a family of surfaces $P_{n,k}$. Let $S$ be an orientable surface of genus 5 with 3
boundary components.  Call the boundary components $c,d$ and $e$. Choose an orientation of $S$ and let $c,d$ and $e$ inherit the induced orientations. Let $p$ and $q$ be marked points in the boundary component $e$. In Step 5 we will remove $p$ and all its copies.  Let  $r$ and
$s$ be the components of $e\setminus\{p,q\}$. We obtain a non-orientable surface $T$ from $S$ by adding two cross caps to $S$ (retaining the orientation of the boundary components of $S$).  The resulting surface $T$ is shown in \autoref{fig:buildingblock}.

\begin{figure}[ht]
    \centering
    \def\svgscale{0.8}
\begingroup%
  \makeatletter%
  \providecommand\color[2][]{%
    \errmessage{(Inkscape) Color is used for the text in Inkscape, but the package 'color.sty' is not loaded}%
    \renewcommand\color[2][]{}%
  }%
  \providecommand\transparent[1]{%
    \errmessage{(Inkscape) Transparency is used (non-zero) for the text in Inkscape, but the package 'transparent.sty' is not loaded}%
    \renewcommand\transparent[1]{}%
  }%
  \providecommand\rotatebox[2]{#2}%
  \newcommand*\fsize{\dimexpr\f@size pt\relax}%
  \newcommand*\lineheight[1]{\fontsize{\fsize}{#1\fsize}\selectfont}%
  \ifx\svgwidth\undefined%
    \setlength{\unitlength}{266.33913055bp}%
    \ifx\svgscale\undefined%
      \relax%
    \else%
      \setlength{\unitlength}{\unitlength * \real{\svgscale}}%
    \fi%
  \else%
    \setlength{\unitlength}{\svgwidth}%
  \fi%
  \global\let\svgwidth\undefined%
  \global\let\svgscale\undefined%
  \makeatother%
  \begin{picture}(1,0.93669087)%
    \lineheight{1}%
    \setlength\tabcolsep{0pt}%
    \put(0,0){\includegraphics[width=\unitlength,page=1]{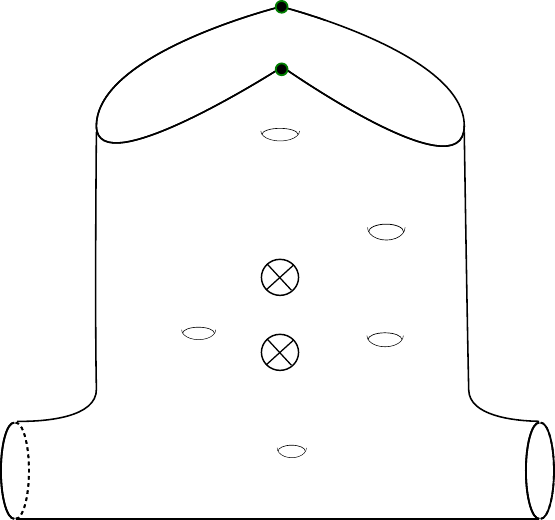}}%
    \put(0.25319527,0.65619769){\makebox(0,0)[lt]{\lineheight{1.25}\smash{\begin{tabular}[t]{l}$s$\end{tabular}}}}%
    \put(0.48630779,0.76528549){\makebox(0,0)[lt]{\lineheight{1.25}\smash{\begin{tabular}[t]{l}$p$\end{tabular}}}}%
    \put(0.71557712,0.65619769){\makebox(0,0)[lt]{\lineheight{1.25}\smash{\begin{tabular}[t]{l}$r$\end{tabular}}}}%
    \put(0.48630779,0.87748639){\makebox(0,0)[lt]{\lineheight{1.25}\smash{\begin{tabular}[t]{l}$q$\end{tabular}}}}%
    \put(0.01248441,0.07929169){\makebox(0,0)[lt]{\lineheight{1.25}\smash{\begin{tabular}[t]{l}$d$\end{tabular}}}}%
    \put(0.96451262, 0.07929169){\makebox(0,0)[lt]{\lineheight{1.25}\smash{\begin{tabular}[t]{l}$c$\end{tabular}}}}%
  \end{picture}%
\endgroup%

    \caption{The surface $T$, which will be the building block of the construction.}
    \label{fig:buildingblock}
\end{figure}

Let $T_{i,j}$ be copies of the surface $T$, where $i,j \in \mathbb{Z}$. Let $c_{i,j}, d_{i,j}$ and $e_{i,j}$ be the (oriented) boundary components of $T_{i,j}$ and let $r_{i,j}$ and $s_{i,j}$ be the copies of the arcs $r$ and $s$ in $T_{i,j}$. Define a connected infinite surface $T_\infty$ as the quotient:
\begin{align*}
  T_\infty \coloneqq \left. \left( \bigcup T_{i,j} \right)\right/\sim
\end{align*}
for all integers $i$ and $j$. The gluing $\sim$ is given by orientation-reversing identifications:
\begin{align}
  c_{i,j} &\sim d_{i+1,j} \label{identification1}\\
  r_{i,j} &\sim s_{i,j+1}.\label{identification2}
\end{align}

We have two
natural shift maps $\overline{\rho_1},\overline{\rho_2}: T_\infty \to T_\infty$ that act in the
following manner:
\begin{align*}
  \overline{\rho_1}: T_{i,j} &\mapsto T_{i+1, j} \\
  \overline{\rho_2}: T_{i,j} &\mapsto T_{i, j+1}.
\end{align*}

Note that $\overline{\rho_1}$ and $\overline{\rho_2}$ commute. Define the surface $P_{n,k}$ as the quotient of the surface $T_\infty$ by the
covering action of the group generated by $(\overline{\rho_1})^n$ and $(\overline{\rho_2})^k$. Then
$\overline{\rho_1}$ and $\overline{\rho_2}$ are equivariant with respect to the covering map.  We denote the induced homeomorphisms of the quotient $P_{n,k}$ by $\rho_1$
and $\rho_2$.  Note that later we will require that $k\geq 3$ and $n$ is the number of punctures, given in \autoref{thm:stretch1}.

\begin{lem}
\label{lem:genera}
Let
\begin{align*}
    g_{n,k} &= (14k - 2)n + 2
\end{align*} for $n \geq 1$ and $k \geq 1$.
    The genus of $P_{n,k}$ is $g_{n,k}$.
\end{lem}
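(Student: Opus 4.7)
The plan is to compute $\chi(P_{n,k})$ directly from the gluing recipe and then extract the non-orientable genus via the identity $\chi = 2 - g$ for a closed non-orientable surface. Non-orientability of $P_{n,k}$ is immediate since each block $T$ contains cross-caps, and $P_{n,k}$ is closed because every boundary component of every tile---the circles $c_{i,j}, d_{i,j}$ and the arcs $r_{i,j}, s_{i,j}$ comprising $e_{i,j}$---appears as one side of an identification in (\ref{identification}).

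First I would compute $\chi(T)$. Since $S$ is orientable of genus $5$ with $3$ boundary components, $\chi(S) = 2 - 2(5) - 3 = -11$, and each cross-cap lowers the Euler characteristic by one, so $\chi(T) = -13$. The disjoint union of $nk$ copies of $T$ therefore has Euler characteristic $-13nk$.

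Next I would analyze how each family of gluings changes $\chi$. The $nk$ circle-identifications $c_{i,j} \sim d_{i+1,j}$ contribute nothing, as $\chi(S^1) = 0$. The $nk$ arc-identifications $r_{i,j} \sim s_{i,j+1}$ are more delicate, because the arc endpoints are the marked points $p_{i,j}, q_{i,j}$, which are shared between adjacent arcs in the $j$-direction, so one must track which vertex identifications are genuinely new and which are redundant. For each fixed $i$, performing the $k$ arc gluings in order around the $\ZZ/k\ZZ$ cycle, the first $k-1$ gluings each remove two vertices and one edge (a fresh $p$-endpoint and a fresh $q$-endpoint are absorbed into the growing equivalence classes), while the final, cycle-closing gluing finds both endpoint identifications already in place and removes only one edge. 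Summing, each fixed $i$ contributes a net change of $-(2k-2) + k = 2 - k$ to $\chi$.

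Putting the pieces together,
\begin{align*}
  \chi(P_{n,k}) = -13nk + n(2-k) = -14nk + 2n,
\end{align*}
so the non-orientable genus is $2 - \chi(P_{n,k}) = 14nk - 2n + 2 = (14k-2)n + 2 = g_{n,k}$, as required. The one subtle point, and the step I would scrutinize most carefully, is that the ``orientation-reversing identifications'' in (\ref{identification}) pair $p$-endpoints with $p$-endpoints and $q$-endpoints with $q$-endpoints within each $\ZZ/k\ZZ$ orbit (rather than mixing $p$'s with $q$'s); a different convention would change the number of collapsed vertex classes and shift $\chi$ by an additive constant depending on the parity of $k$, breaking the formula.
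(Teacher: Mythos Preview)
Your argument is correct and reaches the same conclusion by the same underlying mechanism as the paper: compute $\chi(P_{n,k})$ additively from the building blocks and then read off the non-orientable genus via $\chi = 2 - g$. The organizational difference is that the paper first glues in the $j$-direction to form the column subsurface $U = \bigcup_{j=0}^{k-1} T_{0,j}/\sim'$, asserts that $U$ is non-orientable of genus $12k$ with $2k$ boundary components (hence $\chi(U) = 2 - 14k$), and then glues $n$ copies of $U$ along circles to get $\chi(P_{n,k}) = n\chi(U)$; you instead start from all $nk$ tiles and account for the circle- and arc-gluings in one pass. Your version is actually more explicit where the paper is terse: the paper does not justify the genus-$12k$ claim for $U$, which amounts exactly to your vertex-tracking for the $r_{i,j}\sim s_{i,j+1}$ identifications, and your flag about the $p\leftrightarrow p$, $q\leftrightarrow q$ endpoint convention is well taken --- the paper uses that convention implicitly (see the later remark about removing ``all copies of the point $p$'' and the $2n$ fixed singularities), but never states it.
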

\begin{proof}
  Let $U \subset P_{n,k}$ be the subsurface
  \begin{align*}
    U = \left. \left( \bigcup_{j =0}^{k-1} T_{0,j} \right)\right/\sim'
  \end{align*}
  where $\sim'$ is given by (\ref{identification1}) and (\ref{identification2}) and by identifying $r_{i,k-1}$ and $s_{i,0}$.
  Then $U$ is a compact, non-orientable surface of genus $12k$ with $2k$ boundary components.
  The surface $P_{n,k}$ consists of $n$ copies of $U$ identified along the $2k$ boundary components.  Therefore the Euler characteristic of $P_{n,k}$ is:
  \begin{align*}
    \chi(P_{n,k}) &= n \cdot \chi(U)\\
                  &=n\cdot(2-12k-2k)\\
                  &= -n(14k - 2).
  \end{align*}
  Since $P_{n,k}$ is a non-orientable surface with empty boundary, we have that:
  \begin{equation*}
    g_{n,k} = n(14k-2) + 2. \qedhere
  \end{equation*}
\end{proof}

\subsection*{Step 2: Constructing the maps}
In what is now a classical paper, Penner gives a construction of pseudo-Anosov homeomorphisms on both orientable and non-orientable surfaces \cite{penner1988construction}.  Below we outline the Penner construction for non-orientable surfaces following the details of Liechti--Strenner \cite[Section 2]{LS}.

 \paragraph{Inconsistent markings} Let $\no$ be a non-orientable surface and let $c$ be a two-sided curve in $\no$.  There exists a neighborhood of $c$ that is homeomorphic to an annulus.  Let $\mathcal{A}_c$ be an annulus and let $\zeta_c: \mathcal{A}_c \xrightarrow{} \no$ be the homeomorphism that maps to a neighborhood of $c$.  The homeomorphism $\zeta_c$ is called a \textit{marking} of $c$. A pair consisting of a curve $c$ and $\zeta_c$ is called a {\it marked curve}.
 If we fix an
orientation of $\mathcal{A}_c$, then we can pushforward this orientation to $\no$. Let
$(c,\zeta_c)$ and $(d,\zeta_d)$ be two marked curves that intersect at one point $p$.  We say that $(c,\zeta_c)$ and $(d,\zeta_d)$ are {\it marked inconsistently} if the
pushforward of the orientation of $\mathcal{A}_c$ disagrees with the pushforward of the orientation of $\mathcal{A}_d$ in a neighborhood of $p$.  We emphasize that we can also say that two disjoint curves are inconsistently marked.

\paragraph{Dehn twists} We define the Dehn twist $\phi_{c,\zeta_c}(x)$ around a marked curve $(c,\zeta_c)$ as:
\begin{align*}
  \phi_{c,\zeta_c}(x) =
  \begin{cases}
    \zeta_c \circ \tau_c \circ \zeta_c^{-1}(x) & \text{for } x \in \zeta_c(\mathcal{A}_c) \\
    x & \text{for } x \in \no - \zeta_c(\mathcal{A}_c)
  \end{cases}.
\end{align*}
Here $\tau_c$ is the left-handed Dehn twist on $\mathcal{A}_c$, ie $\tau_c(\theta,t) = (\theta + 2\pi t,t)$.

\paragraph{The Penner construction for non-orientable surfaces} Let $\mathcal{C}$ be a set of marked essential simple closed curves in $\no$ such that no two curves in $\mathcal{C}$ are homotopic.  A Penner construction on $\no$ is a composition of Dehn twists about the marked curves in $\mathcal{C}$ such that:
\begin{enumerate}
\item the complement of curves in $\mathcal{C}$ in $\no$ consists of disks with at most one puncture or marked point,
    \item the marked curves $(c_i,\zeta_i),(c_j,\zeta_j)\in\mathcal{C}$ with $i\neq j$ are marked inconsistently,
    \item a Dehn twist about each marked curve in $\mathcal{C}$ is included in the composition, and
    \item all powers of Dehn twists are positive (alternatively, all powers are negative).
\end{enumerate}

\paragraph{Construction of $f_{n,k}$} We now construct homeomorphisms $f_{n,k}: P_{n,k} \to P_{n,k}$ that are defined as a composition of specific Dehn twists
followed by a finite order mapping class. The key insight is that a power of this map will be a composition of
Dehn twists that satisfy the criteria to be a Penner construction.  Therefore $f_{n,k}$ is pseudo-Anosov. Here we are using the rotational symmetry of the $P_{n,k}$.

\begin{figure}[t]
    \centering
    \def\svgscale{0.8}
    \import{./images/}{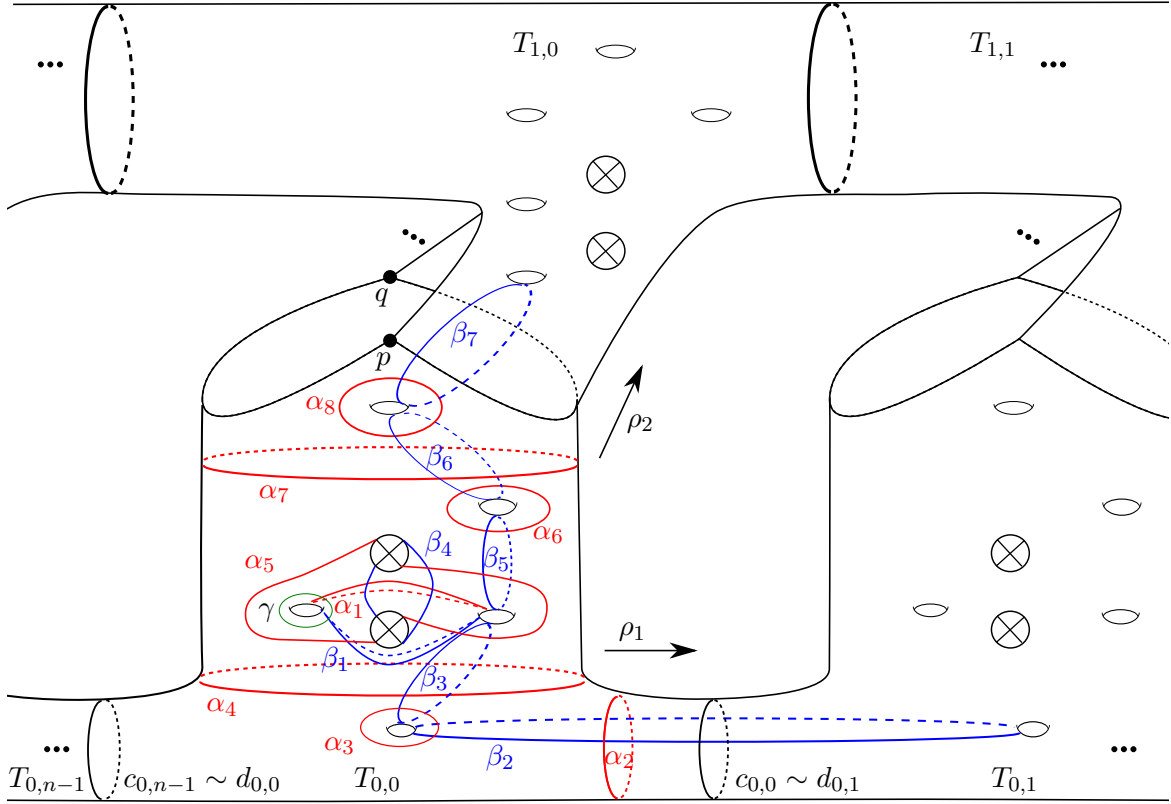}

    \caption{Part of surface $P_{n,k}$ that includes the subsurface $T_{0,0}$ and the curves $\alpha_i$, $\beta_j$, and $\gamma$.}
    \label{fig:curves}
\end{figure}

Let $\{\alpha_1,\cdots,\alpha_8\}$ be the multi-curve in $T_{0,0}$ as shown in \autoref{fig:curves}.  Let $\{\beta_1,\cdots,\beta_7\}$ be the multi-curve in $T_{0,0}\cup T_{0,1}\cup T_{1,0}$ shown in \autoref{fig:curves}.

For any $\alpha_i$, we choose a marking $\zeta_{\alpha_i}$ to be orientation-preserving.  For any $\beta_j$ let $\zeta_{\beta_j}$ be
orientation reversing. From here forward, we will think of $\alpha_i$ and $\beta_j$ as (inconsistently) marked curves but we will suppress the marking maps. These choices give an inconsistent marking of $\{\alpha_1,\cdots,\alpha_8\}\cup\{\beta_1,\cdots,\beta_7\}$.

Let
$$\mathcal{R}=\displaystyle\bigcup_{i=2}^8\alpha_i.$$ Then $\mathcal{R}$ is a marked multi-curve that is disjoint from $\gamma$.  Let
$$\overline{\mathcal{R}}= \mathcal{R} \cup \rho_1(\mathcal{R}) \cup \dots \cup
\rho_1^{n-1}(\mathcal{R}).$$
Let $\Phi_r$ be the composition of Dehn twists about the marked curves in $\overline{\mathcal{R}}$.  Because the curves in $\overline{\mathcal{R}}$ are disjoint, the Dehn twists about the curves commute.

Let $$\mathcal{B}=\displaystyle\bigcup_{j=2}^7\beta_j$$ in $T_{0,0} \cup T_{0,1} \cup T_{1,0}$. As above, $\mathcal{B}$ is a marked multi-curve that is disjoint from $\gamma$.  Let $$\overline{\mathcal{B}} = \mathcal{B} \cup \rho_1(\mathcal{B}) \cup \dots \cup
\rho_1^{n-1}(\mathcal{B}).$$
Let $\Phi_b$
as the composition of Dehn twists about all of the marked curves in $\overline{\mathcal{B}}$.
As with $\overline{\mathcal{R}}$, the Dehn twists about curves in $\overline{\mathcal{B}}$
commute.

Let $\alpha_1,\beta_1 \subset T_{0,0}$ be the (marked) curves in \autoref{fig:curves}. Let $\Phi$ be the composition
of Dehn twists along all the curves $\alpha_1, \rho_1(\alpha_1), \dots, \rho_1^{n-1}(\alpha_1)$ followed by
Dehn twists along all the curves $\beta_1,\rho_1(\beta_1),\dots,\rho_1^{n-1}(\beta_1)$. Define the map $f_{n,k}$
as:
\begin{align*}
    f_{n,k} &\coloneqq \rho_2 \circ \Phi \circ \Phi_b \circ \Phi_r
\end{align*}
Since the curves about which we twist to construct $f_{n,k}$ satisfy the conditions of Penner's construction, $f_{n,k}$ is a pseudo-Anosov homeomorphism.

\subsection*{Step 3: Bounding the Stretch Factor}
Following Yazdi, our next goal is to find an upper bound for the stretch factor of the pseudo-Anosov homeomorphisms $f_{n,k}$.

\paragraph{Train tracks} Let $S$ be a surface.  A \textit{train track} in $S$ is graph embedded in $S$ with that property that for every vertex $v$ of valence three or greater, all edges adjacent to $v$ have the same tangent vector at $v$. Let $\varphi:S\rightarrow S$ be a pseudo-Anosov homeomorphism.  The map $\varphi$ is equipped with a train track whose image under $\varphi$ is homotopic to itself.  Such a train track is an \textit{invariant train track} associated to $\varphi$. Invariant train tracks have an associated matrix whose Perron-Frobenius eigenvalue is the stretch factor of $\varphi$.

Yazdi uses the Lemma \ref{lem:spectral} to bound the spectral radius of the associated matrices.

\begin{lem}[Lemma 2.3 of \cite{yazdi2018pseudo}]
\label{lem:spectral}
Let $A$ be a non-negative integral matrix, $\Gamma$ be the adjacency graph of $A$, and $V(\Gamma)$ the set of
vertices of $\Gamma$. For each $v \in V(\Gamma)$, define $v^+$ to be the set of vertices $u\in V(\Gamma)$ such that there
is an oriented edge from $v$ to $u$. Let $D$ and $k$ be fixed natural numbers. Assume the following conditions
hold for $\Gamma$:
\begin{enumerate}[(i)]
\item For each $v \in V(\Gamma)$ we have $\deg_{\text{out}}(v) \leq D$,
\item There is a partition $V(\Gamma) = V_1 \cup \dots \cup V_\ell$ such that for each $v \in V_i$ we have
  $v^+ \subset V_{i+1}$, for any $1 \leq i \leq \ell$ except possibly when $i = 1$ or 3 (indices are mod $\ell$),
\item For each $v \in V_1$, we have $v^+ \subset V_2 \cup V_3$,
\item For each $v \in V_3$ we have $v^+ \subset V_3 \cup V_4$, and for $u \in v^+ \cap V_3$ we have
  $u^+ \subset V_4$, and
\item For all $3 < j \leq k$ and each $v \in V_j$, the set $v^+$ consists of a single element.
\end{enumerate}

Then the spectral radius of $A^{\ell-1}$ is at most $4D^4$.

\end{lem}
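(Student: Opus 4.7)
The plan is to bound the spectral radius via the infinity matrix norm: for a nonnegative matrix $M$ one has $\rho(M) \le \max_u \sum_v M_{uv}$, the maximum row sum. Since $(A^{\ell-1})_{uv}$ counts directed walks of length $\ell-1$ from $u$ to $v$ in the adjacency graph $\Gamma$, it suffices to show that from any vertex $u$ the number of directed walks of length $\ell - 1$ in $\Gamma$ is at most $4D^4$. The spectral bound then follows immediately.

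To count such walks, I would exploit the near-cyclic structure imposed by the partition $V(\Gamma) = V_1 \cup \cdots \cup V_\ell$. Conditions (ii)--(iv) imply that every edge either makes a ``standard advance'' $V_i \to V_{i+1}$, or is one of two ``non-standard'' moves: a skip $V_1 \to V_3$ allowed by (iii), or a stall $V_3 \to V_3$ allowed by (iv). Moreover (iv) prohibits two consecutive stalls. On the deterministic region $V_4, V_5, \ldots, V_k$, condition (v) forces out-degree $1$, so once the walk enters this region its trajectory is completely determined until it leaves. Branching can therefore happen only at vertices of $V_1$, $V_2$, or $V_3$, and at each such vertex there are at most $D$ choices by (i).

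The core combinatorial claim is that any walk of length $\ell - 1$ makes at most four branching choices. A purely standard walk of length $\ell - 1$ visits each $V_i$ at most once, so it enters $V_1 \cup V_2 \cup V_3$ at most three times; a single skip or stall perturbs this count by at most one. Cases based on the partition class of the starting vertex $u$, together with whether a skip and/or a stall occurs, show directly that at most four ``choice vertices'' are encountered along the walk, each contributing at most $D$ options. Multiplying by a constant factor of $4$ to absorb the choice among the finitely many skip/stall patterns gives the bound of $4D^4$ walks starting at $u$.

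The main obstacle is the final case analysis. In particular, one has to rule out configurations in which a stall secretly introduces a fifth branching opportunity; this is precisely prevented by the ``follow-up constraint'' $u^+ \subset V_4$ in (iv), which forces the step immediately after a stall to be deterministic and so confines the stall to a single factor of $D$. Once this is verified, the row-sum bound produces $\rho(A^{\ell-1}) \le \|A^{\ell-1}\|_\infty \le 4D^4$, completing the proof.
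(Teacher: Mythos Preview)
The paper does not supply its own proof of this lemma; it is quoted as Lemma~2.3 of Yazdi and used as a black box. So there is no in-paper argument to compare yours against.

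That said, your approach is the natural one and is essentially correct. Bounding $\rho(A^{\ell-1})$ by the maximum row sum and interpreting each row sum as the number of directed walks of length $\ell-1$ from a fixed vertex is standard, and it reduces the lemma to the combinatorial claim that any such walk takes at most four steps out of vertices in $V_1\cup V_2\cup V_3$. That claim is true: a direct case analysis on the starting block $V_i$, together with the observation that in $\ell-1$ steps the walk can traverse the ``deterministic corridor'' $V_4,\dots,V_\ell$ at most once, shows that the branching zone is entered as a block of at most four vertices (the maximum $V_1,V_2,V_3,V_3$ occurring exactly when there is a stall and no skip). Since condition~(v) forces out-degree~$1$ on $V_4,\dots,V_k$ (and $k=\ell$ in the intended application), the remaining steps contribute no branching, and the walk count is at most $D^4\le 4D^4$.

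Two small corrections to your reasoning. First, the follow-up clause $u^+\subset V_4$ in~(iv) does \emph{not} make the post-stall step deterministic: $u$ may have up to $D$ out-neighbours, all in $V_4$. What~(iv) actually prevents is a second consecutive stall, so the walk must exit the branching zone after at most four vertices rather than lingering indefinitely in $V_3$; the post-stall vertex is still one of your four branching steps, each worth a factor of~$D$. Second, the extra factor of~$4$ you insert for the ``skip/stall pattern'' is redundant: choosing to skip at $V_1$ or to stall at $V_3$ is already one of the $\le D$ outgoing edges at that vertex, so the pattern is not a separate choice. The sharper bound $D^4$ already holds, which of course implies the stated $4D^4$.
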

\noindent With this result in hand, we can find an upper bound for the stretch factor of $f_{n,k}$

\begin{lem}\label{lem:upperbound}
  Let $\lambda_{n,k}$ be the stretch factor of $f_{n,k}$. Then there exists a universal positive constant $C'$ such that for every $n \geq 1$ and $k \geq 3$, we have the following upper bound:
  \begin{align*}
   \log(\lambda_{n,k}) \leq C'\frac{n}{g_{n,k}}.
  \end{align*}
\end{lem}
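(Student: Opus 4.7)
The plan is to apply Lemma \ref{lem:spectral} to the transition matrix of an invariant train track for $f_{n,k}$, in close parallel with Yazdi's argument for the orientable case. The strategy relies on the observation that $f_{n,k} = \rho_2 \circ \Phi \circ \Phi_b \circ \Phi_r$ is a cyclic shift of order $k$ composed with twists supported in a bounded neighborhood of the ``$j=0,1$'' layer of the tiled surface $P_{n,k}$; thus branches of an invariant train track travel around the cyclic $j$-direction with only a bounded number of exceptional transitions per cycle.

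First, I would write down an explicit invariant train track $\tau$ for $f_{n,k}$ arising from Penner's construction: one branch for each marked curve $\rho_1^i(\alpha_s)$ and $\rho_1^i(\beta_t)$ (for $0 \le i \le n-1$, $1 \le s \le 8$, $1 \le t \le 7$) together with the auxiliary branches created at each intersection point of inconsistently marked curves. The transition matrix $A$ records, for each pair of branches, how many times the $f_{n,k}$-image of one runs along the other; its Perron--Frobenius eigenvalue is $\lambda_{n,k}$.

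Next, I would partition the vertex set $V(\Gamma)$ of the adjacency graph of $A$ into $V_1, \ldots, V_k$ according to the $j$-coordinate (mod $k$) of the tile $T_{i,j}$ containing each branch, indexing so that $V_1$ corresponds to $j=0$, $V_2$ to $j=1$, and so on. Since $\rho_2$ cyclically increments the $j$-coordinate and the Dehn twists $\Phi$, $\Phi_b$, $\Phi_r$ are supported in $\bigcup_{i} \bigl(T_{i,0} \cup T_{i,1} \cup T_{i+1,0}\bigr)$, a branch in $V_j$ with $j \notin \{1,3\}$ is mapped by $f_{n,k}$ into branches lying entirely in $V_{j+1}$, while the branches in $V_1$ and $V_3$ (those affected by twists along $\bar{\mathcal{R}}$ and by $\Phi$, and those that meet curves of $\bar{\mathcal{B}}$ crossing the $j=0/j=1$ seam) can map into $V_2 \cup V_3$ and $V_3 \cup V_4$ respectively. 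The out-degree is bounded by a universal $D$ because any branch meets only boundedly many others in its own and adjacent tiles, independently of $n$ and $k$. Verifying these conditions carefully, and in particular identifying the precise combinatorics by which the $\beta$-twists produce the ``$V_3 \to V_3$'' exception in condition (iv) of Lemma \ref{lem:spectral}, is the main obstacle; it amounts to a careful but finite case analysis of how the branches of $\tau$ near the seams $T_{i,0} \cap T_{i,1}$ transform under the composite map.

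Once conditions (i)--(v) are verified for the partition with $\ell = k$, Lemma \ref{lem:spectral} yields that the spectral radius of $A^{k-1}$ is at most $4D^4$, so
\begin{equation*}
  \log(\lambda_{n,k}) \;\le\; \frac{\log(4D^4)}{k-1}.
\end{equation*}
Using $g_{n,k} = (14k-2)n + 2 \le 14kn$ and $k \ge 3$, we have $\tfrac{1}{k-1} \le \tfrac{2}{k} \le \tfrac{28 n}{g_{n,k}}$, which gives
\begin{equation*}
  \log(\lambda_{n,k}) \;\le\; 28 \log(4D^4) \cdot \frac{n}{g_{n,k}},
\end{equation*}
so we may take $C' = 28\log(4D^4)$ and the lemma follows. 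Throughout, the only place where non-orientability enters is in justifying that Penner's construction produces an invariant train track on the non-orientable surface $P_{n,k}$; this is handled by the inconsistency of the markings on $\overline{\mathcal R}$ and $\overline{\mathcal B}$, exactly as in Liechti--Strenner \cite{LS}.
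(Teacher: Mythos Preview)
Your approach is essentially the paper's: partition by $\rho_2$-layer, verify the hypotheses of Lemma~\ref{lem:spectral} with $\ell = k$, and finish with the identical arithmetic yielding $C' = 28\log(4D^4)$. Two small adjustments are needed to make the verification go through: the partition must be by which $\rho_2$-translate of $\overline{\mathcal{A}}$ a curve belongs to rather than by ``the tile containing the branch'' (the latter is ill-defined, since curves like $\beta_3$ cross between $T_{0,0}$ and $T_{0,1}$), and it must be indexed so that $V_2 = \overline{\mathcal{A}}$, $V_1 = \rho_2^{-1}(\overline{\mathcal{A}})$, $V_3 = \rho_2(\overline{\mathcal{A}})$ --- with your shift putting the twist layer at $V_1$, condition~(ii) fails for $V_2$; the paper also works directly with the span of the transverse measures $\mu_\delta$ for $\delta \in \widehat{\mathcal{A}}$ rather than the full branch set of a smoothed train track, which sidesteps the auxiliary intersection branches you mention.
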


\begin{proof}
 We deliberately constructed our curves so that all intersections of the multi-curve $\{\alpha_1,\dots,\alpha_8\}$ and $\{\beta_1,\dots,\beta_7\}$ occur in the subsurface $T_{0,0}$. The curve $\beta_3$ intersects $\rho_2(\alpha_3)$ at one point in $T_{0,1}$ and $\beta_7$ intersects $\rho_1(\alpha_8)$ at one point in $T_{1,0}$.

  We define the following unions of marked curves:
\begin{align*}
  \mathcal{A} &\coloneqq \mathcal{B} \cup \mathcal{R} \cup \{\alpha_1,\beta_1\} =\bigcup_{i=1}^8\alpha_i\cup\bigcup_{j=1}^7\beta_j,\\
  \smallskip
  \overline{\mathcal{A}} &\coloneqq \mathcal{A} \cup \rho_1(\mathcal{A}) \cup \dots \cup \rho_1^{n-1}(\mathcal{A}),\text{ and}\\
  \widehat{\mathcal{A}} &\coloneqq \overline{\mathcal{A}} \cup \rho_2\left(\overline{\mathcal{A}}\right) \cup \dots \cup \rho_2^{k-1}\left(\overline{\mathcal{A}}\right).
\end{align*}

Because $f_{n,k}$ is pseudo-Anosov, it has a corresponding invariant train track $\tau$.
Let $V_{\tau}$ be the space of all measured foliations that can be obtained by varying the weights on the tracks of $\tau$.
This forms a finite dimensional cone of measures, all of which can be carried by the combinatorial train track $\tau$.
Furthermore, $f_{n,k}$ acts linearly on this cone, and leaves the cone invariant, since $\tau$ is an invariant track for $f_{n,k}$.
Consider now the transverse measure $\mu_{\delta}$ for any curve $\delta$ in $\widehat{\mathcal{A}}$.
This transverse measure is carried by $\tau$, and thus $\mu_{\delta}$ belongs in the cone of measures $V_{\tau}$.
Let $H$ be the subspace spanned by $\{\mu_\delta \mid \delta\subset\widehat{\mathcal{A}}\}$.
This linear subspace is also left invariant by $f_{n,k}$.  Let $M$
be the matrix representing the linear action of $f_{n,k}$ on $H$ with respect to the basis $\{\mu_\delta \mid \delta\subset\widehat{\mathcal{A}}\}$.  Let $\Gamma$ be the adjacency graph for $M$. Work of Penner \cite{penner1988construction} tells us that the Perron--Frobenius eigenvalue of $M$ is the stretch factor of $f_{n,k}$.

To bound the spectral radius of $M$, we need to show that $\Gamma$ satisfies
the criteria of Lemma \ref{lem:spectral}.

\begin{enumerate}[(i)]
\item There exists a constant $D'$, independent of $n$ and $k$, such that for every curve $\delta \in \widehat{\mathcal{A}}$, the geometric
  intersection number between $\delta$ and every curve in $\overline{\mathcal{A}}$ is at most $D'$.
  Recall that $f_{n,k}=\rho_2\circ\Phi\circ\Phi_b\circ\Phi_r$.  Let $M_1,M_2,M_3$ and $M_4$ be the matrices describing the linear action of $\Phi_r,\,\Phi_b,\,\Phi$ and $\rho_2$ on $H$, respectively. The matrix
  $M$ can then be written as a product:
  \begin{align*}
    M = M_4M_3M_2M_1.
  \end{align*}
  For a curve $\delta \in \widehat{\mathcal{A}}$, the $L^1$-norm of $M_i(\mu_\delta)$ is bounded above by the geometric intersection of
  $f_{n,k}(\delta)$ with the curves in $\overline{\mathcal{A}}$.  Thus each of $M_1$, $M_2$, and $M_3$ will change the norm by
  a factor of at most $(1 + D')$. Since $\rho_2$ will not change intersection numbers, $M_4$ will preserve the
  $L^1$-norm. If we let $D = (1 + D')^3$, then the outward degree of each vertex in $\Gamma$ is at most $D$.

\medskip
For the remaining conditions, we partition the vertices of $\Gamma$.  Observe
$$\widehat{\mathcal{A}} = \rho_{2}^{-1}(\overline{\mathcal{A}})\cup\overline{\mathcal{A}}\cup\bigcup_{i=3}^k \rho_2^{i-2}(\overline{\mathcal{A}}).$$ Then define $V_1$ as the vertices of $\Gamma$ corresponding to $\rho_2^{-1}(\overline{\mathcal{A}})$, the set $V_2$ as the vertices of $\Gamma$ corresponding to $\overline{\mathcal{A}}$, and $V_i$ for
$3 \leq i \leq k$ as the vertices of $\Gamma$ corresponding to elements in
$\rho_2^{i-2}(\overline{\mathcal{A}})$.
\item Suppose that $v \in V_i$ for $i \neq 1,3$, is a vertex
  that corresponds to $\mu_\delta$ for a curve $\delta \in \widehat{\mathcal{A}}$.  
  Then $\delta$ is disjoint from all curves in $\overline{\mathcal{A}}$.  The action of $\Phi\circ\Phi_b\circ\Phi_r$ on $\widehat{\mathcal{A}}$ will preserve the set $\rho_2^{(i-2)\mod k}(\overline{\mathcal{A}})$ for each $i\neq 1,3$.  In particular, $\{\mu_\delta \mid \delta\subset\widehat{\mathcal{A}}\}$ will also be in $\rho_2^{(i-2)\mod k}(\overline{\mathcal{A}})$.  
  Then $\rho_2$ will rotate the curve $\Phi\circ \Phi_b\circ\Phi_r(\delta)$ into the set $\rho_2^{(i-1)\mod k}(\overline{\mathcal{A}})$. That is: $f_{n,k}=\rho_2\circ\Phi\circ\Phi_b\circ\Phi_r$ maps $\mu_\delta\in H$ to $$\sum_{\zeta\in \mathcal{Z}}\mu_\zeta$$ where $\mathcal{Z}$ is a subset of $\rho_2^{(i-1)\mod k}(\overline{\mathcal{A}})$.  Therefore $f_{n,k}$ maps $v$ to a subset of $V_{i+1}$.
\item To verify the third condition, we first look at the vertices $v \in V_1$ such that $v^+ \not\subset V_2$. Such vertices will correspond to the curves in $\rho_2^{-1}(\overline{\mathcal{A}})$ that $\Phi\circ\Phi_b\circ\Phi_r$ maps to curves that are not in $\rho_2(\overline{\mathcal{A}})$.  Because $\rho_1$ and $\rho_2$ commute, we can write the curves of $\rho_2^{-1}(\overline{\mathcal{A}})$ as:
    $$\rho_2^{-1}(\overline{\mathcal{A}})=\rho_2^{-1}(\mathcal{A})\cup\rho_1(\rho_2^{-1}(\mathcal{A}))\cup\cdots\cup\rho_1^{n-1}(\rho_2^{-1}(\mathcal{A})).$$  The elements of $v^+$ that are not in $V_2$ correspond to the images of curves in $\rho_2^{-1}(\overline{\mathcal{A}})$ under $f_{n,k}$ that are not in $\overline{\mathcal{A}}$.
As in Yazdi, the only curves in $\rho_2^{-1}(\overline{\mathcal{A}})$ that intersect curves in $\overline{\mathcal{A}}$ are those in the set:
\begin{align*}
    \mathcal{X} = \{ \rho_1^i(\rho_2^{-1}(\beta_7))\,\mid\,0\leq i\leq n-1\}.
  \end{align*}
  Therefore $\Phi\circ\Phi_b\circ\Phi_r$ maps curves in $\mathcal{X}$ to curves in $\rho_2^{-1}(\overline{\mathcal{A}})\cup\overline{\mathcal{A}}$.  Then $f_{n,k}=\rho_2\circ\Phi\circ\Phi_b\circ\Phi_r$ maps curves in $\mathcal{X}$ to curves in $\overline{\mathcal{A}}\cup\rho_2(\overline{\mathcal{A}}).$
For any curve in $\mathcal{X}$, the corresponding vertex $v\in V_1$ will have
  $v^+ \subset V_2 \cup V_3.$  Moreover, $f_{n,k}$ maps the curves $\rho_2^{-1}(\overline{\mathcal{A}})\setminus \mathcal{X}$ to curves in $\overline{\mathcal{A}}$.  Thus for any vertex $v\in V_1$ that does not correspond to an element of $\mathcal{X}$, the set $v^+$ is contained in $V_2$.
\item  Similarly, we look for the $v \in V_3$ such that $v^+ \not\subset V_4$.  Such vertices will correspond to the curves in $\rho_2(\overline{\mathcal{A}})$ that $\Phi\circ\Phi_b\circ\Phi_r$ maps to curves that are not in $\rho_2^2(\overline{\mathcal{A}})$.  As above, we have: $$\rho_2(\overline{\mathcal{A}})=\rho_2(\mathcal{A})\cup\rho_1(\rho_2(\mathcal{A}))\cup\cdots\cup\rho_1^{n-1}(\rho_2(\mathcal{A})).$$
The elements of $v^+$ that are not in $V_4$ correspond to the the images of $\rho_2(\overline{\mathcal{A}})$ that intersect the curves in $\overline{\mathcal{A}}$.
The only vertices of $V_4$ that correspond to such curves are those in the set:
 $$\mathcal{Y}=\{\rho_1^i(\rho_2(\alpha_8))\,\mid\,0\leq i\leq n-1\}.$$ 

  For any element $v \in V_3$ corresponding to a curve in $\mathcal{Y}$ and any
  $u \in v^+ \cap V_3$, the vertex $u$ does not correspond to an element of $\mathcal{Y}$.  Therefore $u^+ \subset V_4$.
\item All the curves corresponding to an element of $V_j$, $3 < j \leq k$ are disjoint from all the curves in
  $\overline{\mathcal{A}}$. Thus, $f_{n,k}$ just acts by rotation.
\end{enumerate}

Let $\lambda = \lambda_{n,k}$ be the stretch factor of $f_{n,k}$.  By Lemma \ref{lem:spectral}, we have:
\begin{gather*}
    \lambda^{k-1} = \rho(M)^{k-1} = \rho(M^{k-1}) \leq 4D^4.
\end{gather*}
Then the logarithm of $\lambda$ satisfies:
$$\log(\lambda^{k-1})=(k-1)\cdot \log(\lambda) \leq \log(4D^4).$$
Then for $k\geq 2$
    $$\frac{k}{2}\log(\lambda) \leq (k-1)\log(\lambda) \leq \log(4D^4).$$
On the other hand, we know $g_{n,k} = (14k - 2)n + 2 \leq 14kn$ by Lemma \ref{lem:genera}. Therefore
\begin{align*}
    \log(\lambda) \leq 2\log(4D^4)\cdot\frac{1}{k} \leq 2\log(4D^4)\cdot \frac{14n}{g_{n,k}}.
\end{align*}
Let $C' \coloneqq 28\log(4D^4)$ to complete the result.
\end{proof}

\subsection*{Step 4: The Mapping Torus}

We have now constructed an infinite family of non-orientable surfaces $P_{n,k}$ and pseudo-Anosov homeomorphisms $f_{n,k}:P_{n,k}\to P_{n,k}$, but this is not
enough. In Lemma \ref{lem:genera}, we show that $\{P_{n,k}\}$ does not include surfaces of infinitely many genera. We use the strategy of McMullen \cite{mcmullen2000polynomial} and our extension of the Thurston's
fibered face theory to fill in the gaps.

Next we follow the strategy of Leininger--Margalit \cite{leininger2013number} to find a surface embedded in the mapping torus of minimal genus.  In our situation, this means that we will construct an embedded surface homeomorphic to $\no_3$.

\begin{figure}[t]
    \centering
    \def\svgscale{0.75}
\begingroup%
  \makeatletter%
  \providecommand\color[2][]{%
    \errmessage{(Inkscape) Color is used for the text in Inkscape, but the package 'color.sty' is not loaded}%
    \renewcommand\color[2][]{}%
  }%
  \providecommand\transparent[1]{%
    \errmessage{(Inkscape) Transparency is used (non-zero) for the text in Inkscape, but the package 'transparent.sty' is not loaded}%
    \renewcommand\transparent[1]{}%
  }%
  \providecommand\rotatebox[2]{#2}%
  \newcommand*\fsize{\dimexpr\f@size pt\relax}%
  \newcommand*\lineheight[1]{\fontsize{\fsize}{#1\fsize}\selectfont}%
  \ifx\svgwidth\undefined%
    \setlength{\unitlength}{266.33913055bp}%
    \ifx\svgscale\undefined%
      \relax%
    \else%
      \setlength{\unitlength}{\unitlength * \real{\svgscale}}%
    \fi%
  \else%
    \setlength{\unitlength}{\svgwidth}%
  \fi%
  \global\let\svgwidth\undefined%
  \global\let\svgscale\undefined%
  \makeatother%
  \begin{picture}(1,0.93669087)%
    \lineheight{1}%
    \setlength\tabcolsep{0pt}%
    \put(0,0){\includegraphics[width=\unitlength,page=1]{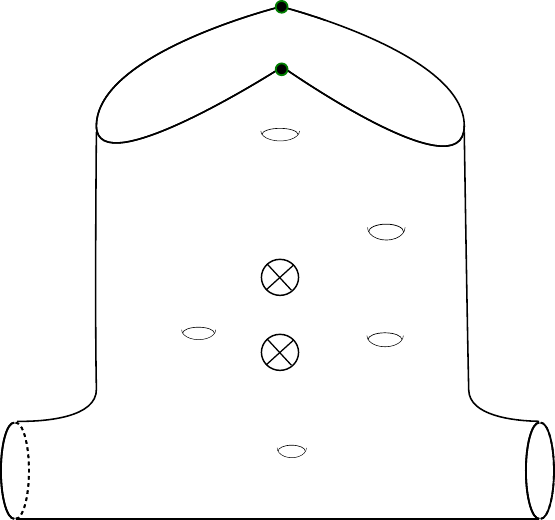}}%
    \put(0.25319527,0.65619769){\makebox(0,0)[lt]{\lineheight{1.25}\smash{\begin{tabular}[t]{l}$s$\end{tabular}}}}%
    \put(0.48630779,0.76528549){\makebox(0,0)[lt]{\lineheight{1.25}\smash{\begin{tabular}[t]{l}$p$\end{tabular}}}}%
    \put(0.71557712,0.65619769){\makebox(0,0)[lt]{\lineheight{1.25}\smash{\begin{tabular}[t]{l}$r$\end{tabular}}}}%
    \put(0.48630779,0.87748639){\makebox(0,0)[lt]{\lineheight{1.25}\smash{\begin{tabular}[t]{l}$q$\end{tabular}}}}%
    \put(0.01248441,0.07929169){\makebox(0,0)[lt]{\lineheight{1.25}\smash{\begin{tabular}[t]{l}$d$\end{tabular}}}}%
    \put(0.96451262, 0.07929169){\makebox(0,0)[lt]{\lineheight{1.25}\smash{\begin{tabular}[t]{l}$c$\end{tabular}}}}%
    \put(0,0){\includegraphics[width=\unitlength,page=2]{gamma_curves.pdf}}%
    \put(0.57809029,0.22834353){\makebox(0,0)[lt]{\lineheight{1.25}\smash{\begin{tabular}[t]{l}$\widehat{\gamma}$\end{tabular}}}}%
    \put(0.33967926,0.35720275){\makebox(0,0)[lt]{\lineheight{1.25}\smash{\begin{tabular}[t]{l}$\gamma$\end{tabular}}}}%
  \end{picture}%
\endgroup%

    \caption{The curves $\gamma$ and $\widehat{\gamma}$ bound an a non-orientable surface of genus 1.}
    \label{fig:gammacurves}
\end{figure}

\begin{prop}
\label{lem:genus3}
Let $M_{n,k}$ be the mapping torus of $f_{n,k}$. Let $\mathcal{K}_{n,k}$ denote the fibered cone of
$H^1(M_{n,k};\mathbb{R})$ corresponding to the map $f_{n,k}$.
There is a relatively orientable incompressible surface $F_{n,k}$ embedded in $M_{n,k}$ that is homeomorphic to $\mathcal{N}_3$.
Moreover $F_{n,k}$ is transverse to the suspension flow direction given by $f_{n,k}$ and the Poincar\'e dual of $F_{n,k}$ is in
the closure $\overline{\mathcal{K}_{n,k}}$.
\end{prop}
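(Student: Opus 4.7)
The plan is to build $F_{n,k}$ by combining the non-orientable subsurface $\Sigma \subset P_{n,k}$ of genus $1$ with two boundary components $\gamma, \widehat\gamma$ (as shown in Figure \ref{fig:gammacurves}) with an embedded annulus in $M_{n,k}$ connecting those two boundary curves, following the low-genus fiber strategy of Leininger--Margalit \cite{leininger2013number} adapted to the non-orientable setting.

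First, I would verify that $f_{n,k}$ exchanges $\gamma$ and $\widehat\gamma$ up to isotopy. Both curves are disjoint from every curve in $\widehat{\mathcal A}$, so $\Phi \circ \Phi_b \circ \Phi_r$ is the identity on a neighborhood of $\gamma \cup \widehat\gamma$ up to isotopy, and consequently $f_{n,k}$ restricts (up to isotopy) to $\rho_2$ on $\gamma \cup \widehat\gamma$, which swaps the two curves by inspection of the figure.

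Next I would assemble $F_{n,k}$. View $M_{n,k}$ as the quotient of $P_{n,k} \times [0,1]$ by $(x,1)\sim(f_{n,k}(x),0)$. Place $\Sigma$ inside the slice $P_{n,k}\times\{0\}$ and choose a smooth loop $\gamma_t$ in the space of embedded curves in $P_{n,k}$ with $\gamma_0 = \gamma_1 = \gamma$ and with $\partial_t \gamma_t$ nowhere tangent to $\gamma_t$. The trace $\{(\gamma_t(s), t) : s \in S^1,\, t \in [0,1]\}\subset P_{n,k}\times[0,1]$ descends to an embedded annulus $A \subset M_{n,k}$ whose two boundary circles are $\gamma\times\{0\}$ and, via the monodromy identification, $f_{n,k}(\gamma)\times\{0\} = \widehat\gamma\times\{0\}$. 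Setting $F_{n,k} := \Sigma \cup_{\partial\Sigma} A$ and smoothing across $\partial\Sigma$ yields a closed, smooth, embedded surface with
\begin{align*}
\chi(F_{n,k}) = \chi(\Sigma) + \chi(A) = -1 + 0 = -1,
\end{align*}
which is non-orientable because $\Sigma$ is, so $F_{n,k} \cong \mathcal{N}_3$. The flow direction $\partial_t$ is nowhere tangent to $F_{n,k}$: on the interior of $\Sigma$ the tangent plane is the horizontal fiber plane, and on $A$ the choice of $\gamma_t$ excludes $\partial_t$ from the tangent plane. Thus $F_{n,k}$ is transverse to the suspension flow, and taking $\partial_t$ as the positive normal gives a relative orientation.

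Finally, to prove incompressibility I would lift to the orientation double cover $p:\widetilde{M}_{n,k}\to M_{n,k}$. The preimage $\widetilde{F}_{n,k} = p^{-1}(F_{n,k})$ is built by gluing the orientation double cover of $\Sigma$ (a four-holed sphere) to two annular lifts of $A$, producing a closed orientable genus-$2$ surface in $\widetilde{M}_{n,k}$. A standard innermost-disc argument, combined with the incompressibility of the orientable fiber $\widetilde{P}_{n,k}$, reduces any compressing disc for $\widetilde{F}_{n,k}$ to a compression of the lifted $\Sigma$ inside $\widetilde{P}_{n,k}$; this is ruled out because $\gamma$ and $\widehat\gamma$ are essential simple closed curves disjoint from the Penner multicurves, which makes the lift of $\Sigma$ $\pi_1$-injective in $\widetilde{P}_{n,k}$. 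Since $\pi_1(\mathcal{N}_3)$ is torsion-free, the injection $\pi_1(\widetilde{F}_{n,k}) \hookrightarrow \pi_1(\widetilde{M}_{n,k}) \hookrightarrow \pi_1(M_{n,k})$ upgrades to an injection $\pi_1(F_{n,k}) \hookrightarrow \pi_1(M_{n,k})$, so $F_{n,k}$ is incompressible. Theorem \ref{thm:classifying-fibrations}(ii) then places the Poincar\'e dual of $F_{n,k}$ in $\overline{\mathcal{K}_{n,k}}$, completing the proof. The main obstacles will be the combinatorial check that $f_{n,k}$ swaps $\gamma$ and $\widehat\gamma$ up to isotopy, and the innermost-disc argument establishing incompressibility in the double cover.
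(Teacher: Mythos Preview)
Your first step contains a genuine error that prevents the construction from closing up.  The curve $\gamma$ is disjoint from the multicurves $\overline{\mathcal R}$ and $\overline{\mathcal B}$ (that is, from $\alpha_i$ with $i\geq 2$ and $\beta_j$ with $j\geq 2$), but it is \emph{not} disjoint from $\alpha_1$ and $\beta_1$; indeed $\widehat\gamma$ is by definition $\Phi(\gamma)$, so $\Phi$ genuinely moves $\gamma$.  Hence
\[
f_{n,k}(\gamma)=\rho_2\circ\Phi\circ\Phi_b\circ\Phi_r(\gamma)=\rho_2\circ\Phi(\gamma)=\rho_2(\widehat\gamma),
\]
which lies in $T_{0,1}$, not in $T_{0,0}$.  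Moreover $\rho_2$ is the shift $T_{i,j}\mapsto T_{i,j+1}$ and certainly does not swap the two curves in Figure~\ref{fig:gammacurves}.  So your annulus, built from a single pass through $P_{n,k}\times[0,1]$, has boundary $\gamma\cup\rho_2(\widehat\gamma)$ and does not match $\partial\Sigma=\gamma\cup\widehat\gamma$.  The remedy, and this is what the paper does, is to iterate: for $1\leq i\leq k$ one has $f_{n,k}^{\,i}(\gamma)=\rho_2^{\,i}(\widehat\gamma)$, and since $\rho_2^{\,k}=\mathrm{id}$ this gives $f_{n,k}^{\,k}(\gamma)=\widehat\gamma$.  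The annulus $A$ must therefore be the union of $k$ suspension annuli $A_1,\dots,A_k$, winding $k$ times around the mapping torus before its boundary returns to $\widehat\gamma$.

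On incompressibility, your double-cover plus innermost-disc argument is plausible but much heavier than needed.  The paper simply notes that $M_{n,k}$ is hyperbolic (the monodromy is pseudo-Anosov) and that $\mathcal N_3$ is the smallest-genus closed non-orientable surface with negative Euler characteristic; any compressing disc would yield a piece of non-negative Euler characteristic, which cannot be essential in a hyperbolic $3$-manifold.  Once you correct the annulus construction, this one-line argument suffices.
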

\begin{proof}
  Let $\gamma \subset T_{0,0}$ be the curve shown in \autoref{fig:gammacurves}. Note that $\gamma$ and $\Phi(\gamma)$ bound a non-orientable surface
  $\widehat{F}$ of genus 1 with boundary. For convenience, we will denote $\Phi(\gamma)$ by $\widehat{\gamma}$. We are going to follow the image of $\gamma$
  under powers of $f_{n,k}$.  Then we attach annuli to the
  boundary of $\widehat{F}$ to obtain $\mathcal{N}_3$. Since $\gamma$ is disjoint from all curves in $\overline{\mathcal{R}}$ and $\overline{\mathcal{B}}$ (as seen in \autoref{fig:curves}), the maps $\Phi_r$ and $\Phi_b$ act trivially on $\gamma$.  Recalling that $f_{n,k}=\rho_2\circ\Phi\circ\Phi_b\circ\Phi_r$, we have the following:
  \begin{align*}
    f_{n,k}(\gamma) &= \rho_2 \circ \Phi \circ \Phi_b \circ \Phi_r(\gamma) \\
                    &= \rho_2 \circ \Phi(\gamma) \\
                    &= \rho_2(\widehat{\gamma}).
  \end{align*}
  It follows that for all $1\leq i\leq k$, the curve $f_{n,k}^i(\gamma)$ is $\rho_2^i(\widehat{\gamma})$.
  For $1\leq i\leq k$, let $A_i$ be an annulus in $M_{n,k}$ that connects $f_{n,k}^{i-1}(\gamma)$ to $f_{n,k}^i(\gamma)$ obtained by following the suspension
  flow of $f_{n,k}$ around $M_{n,k}$. Let $A$ be the union of all of the $A_i$, which is also an annulus.  We can now construct the embedded surface $F_{n,k}$ by taking the union of
  $A$ and $\widehat{F}$. The union of $\widehat{F}$ with $A$ has empty boundary and Euler characteristic 0, so $F_{n,k}$ is homeomorphic to $\mathcal{N}_3$.

  We now need to show that $F_{n,k}$ is relatively orientable. We construct a outwards pointing normal vector field by combining the outwards pointing vector fields on $\widehat{F}$ and $A$ given by following $\gamma$ along the suspension flow.  Let $v_1$ be a vector field on $\widehat{F}$ pointing in the flow direction.  Define $v_2$ to be a vector field on $A$ as follows: on $\gamma$ define $v_2$ to be the vector field pointing in to $\widehat{F}$.  Flow the vector field along the suspension flow so $v_2$ is pointing away from $\widehat{F}$ on $\widehat{\gamma}$.

Let $U$ be a neighborhood of $\gamma$ in $F_{n,k}=\widehat{F}\cup A$. Define two bump functions $c_1$ and $c_2$ supported in $U$.  Let $c_1$ be $1$ on $\partial U\cap A$ and $0$ on $\widehat{F}$. Let $c_2$ be $1$ on $\partial U\cap \widehat{F}$ and $0$ on $A$.
  We add the vector fields $v_1$ on $\widehat{F}$ and $v_2$ on $A$ using these bump functions: the resulting vector field is $c_1 v_1 + c_2 v_2$.
  Observe that since $v_1$ points in the flow direction, and $v_2$ points into the surface, the new vector field $c_1 v_1 + c_2 v_2$ is transverse to $F_{n,k}$ in the neighborhood of $\gamma$ (see \autoref{fig:gluing-normal-field} for a picture of the resulting transverse vector field).

  We perform a similar construction in a small neighborhood of $\widehat{\gamma}$: in this case, the fact that the vector field on $\widehat{F}$ points in the flow direction, and the vector field on $A$ points away from the surface $\widehat{F}$ ensures that the new vector field is transverse, in a neighborhood of $\widehat{\gamma}$, to the surface $F_{n,k}$.

  A key fact we use in this construction is that the vector field along $A$ that starts pointing into $\widehat{F}$ at $\gamma$ comes back pointing away from the surface at $\widehat{\gamma}$: this is because the homeomorphism $f_{n,kj}$ maps the inner tubular neighborhood of $\gamma$ to the outer tubular neighborhood of $\widehat{\gamma}$, where inner and outer tubular neighborhood are the half tubular neighborhoods contained in $\widehat{F}$ and the complement respectively.
  This fact about $f_{n,k}$ follows from its definition, ie following the four homeomorphisms whose composition is $f_{n,k}$.
\begin{figure}[t]
  \centering
    \def\svgscale{0.5}
    \import{./images/}{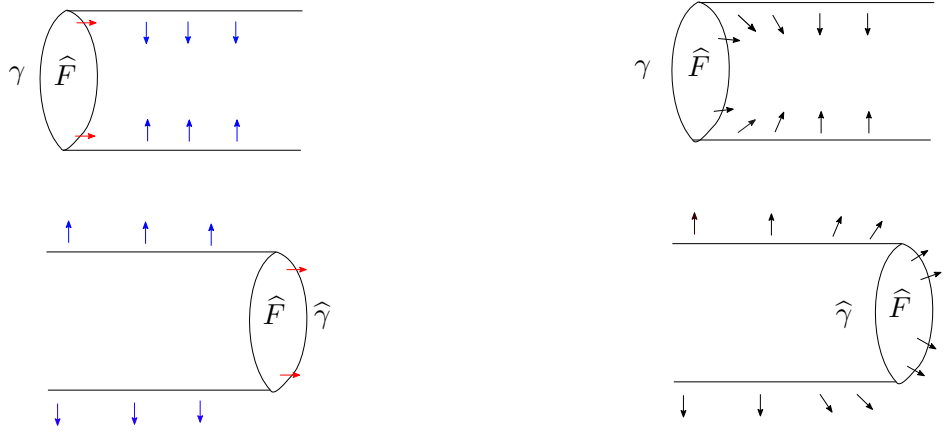}

  \caption{The left side shows the vector fields $v_1$ on $\widehat{F}$ and $v_2$ on $A$.  The upper picture is a neighborhood of $\gamma$ and the lower picture is a neighborhood of $\widehat{\gamma}$.  The right side shows the vector fields $c_1v_1+c_2v_2$ on neighborhoods of $\gamma$ and $\widehat{\gamma}$.}
  \label{fig:gluing-normal-field}
\end{figure}

  The proof that $F_{n,k}$ can be isotoped to be transverse to the suspension flow is the same as the
  proof Yazdi uses \cite{yazdi2018pseudo}, which is a restatement of that of Leininger--Margalit \cite{leininger2013number}. We include it here for completeness.

  Let $N(\gamma)$ be a tubular neighborhood of $\gamma$ in $\widehat{F}$.  Let $\eta: \widehat{F} \xrightarrow{} [0,1]$ be a smooth function supported on $N(\gamma)$ with the following properties:
  \begin{itemize}
      \item $\eta^{-1}(1) = \gamma$ and
      \item the derivative of $\eta$ vanishes on $\gamma$.
    \end{itemize}
Let $\pi:M_{n,k}\rightarrow S^1$ be the projection map and let $t_0$ be such that $\widehat{F}\subset\pi^{-1}(t_0)$.  Let $g: \widehat{F} \xrightarrow{} M_{n,k}$ be the suspension flow of $f_{n,k}$ defined as $g(x) =(x,t_0+k\cdot\eta(x))$. Then the restriction of $g$ to the interior of $\widehat{F}$ is an embedding into $M_{n,k}$ and $g(\gamma) = \widehat{\gamma}$. Therefore the image of $\widehat{F}$ under $g$ is an embedded non-orientable surface of genus three. Moreover, $g(\widehat{F})$ is isotopic to the natural embedding of $F_{n,k}$ in $M_{n,k}$, and is transverse to the suspension flow.
  Therefore, the Poincar\'e dual of $F_{n,k}$ is in $\overline{\mathcal{K}_{n,k}}$ by \autoref{thm:classifying-fibrations}.

  Finally, $F_{n,k}$ is incompressible in $M_{n,k}$ because $M_{n,k}$ is hyperbolic, and $F_{n,k}$ is genus $3$, the
  lowest possible genus for a hyperbolic non-orientable surface.
\end{proof}

\subsection*{Step 5: Filling in the Gaps}
Recall that the family of surfaces $P_{n,k}$ that we have constructed have genera in the set $\{(14k-2)n+2\}$.
We now want to construct surfaces of genera not in the set $\{(14k-2)n+2\}$ and pseudo-Anosov homeomorphisms of those surfaces that have small stretch factors.  To do this we use the mapping torus $M_{n,k}$ of $P_{n,k}$ by $f_{n,k}:P_{n,k}\to P_{n,k}$. By Proposition \ref{lem:genus3}, there exists a relatively incompressible surface $F_{n,k}$ in $M_{n,k}$ that is homeomorphic to $\no_3$.  Let $P_{n,k}^r$ be the oriented sum of $P_{n,k}$ and
$rF_{n,k}$, as defined in Proposition $\ref{thm:oriented-sum}$.  The surfaces $P_{n,k}^r$ will be surfaces of the remaining genera.

\begin{lem}
  The surface $P^r_{n,k}$ is of genus $g^r_{n,k} = g_{n,k} + r$. In particular, as $r$ varies between
  $0$ and $14n$, the genera of $\{P^r_{n,k}\}$ spans the range between $g_{n,k}$ and $g_{n,k+1}$. Moreover,
  $P^r_{n,k}$ is isotopic to a fiber of a fibration of $M_{n,k}$ with pseudo-Anosov monodromy that fixes $2n$
  of the singularities of its invariant foliation.
\end{lem}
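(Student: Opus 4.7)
The plan has three parts: the genus and spanning formulas, the realization of $P^r_{n,k}$ as the fiber of a pseudo-Anosov fibration, and the identification of the $2n$ fixed singularities of the invariant foliation.

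\textbf{Genus.} Additivity of the Euler characteristic under oriented sum (Section~\ref{sec:oriented-sums}), together with $\chi(F_{n,k}) = \chi(\no_3) = -1$ and the computation $\chi(P_{n,k}) = -n(14k-2) = 2 - g_{n,k}$ from the proof of Lemma~\ref{lem:genera}, gives
\begin{align*}
    \chi(P^r_{n,k}) = \chi(P_{n,k}) - r = 2 - (g_{n,k} + r).
\end{align*}
Since $P^r_{n,k}$ is closed and non-orientable (any one-sided curve in $P_{n,k}$ lying away from the intersection with $F_{n,k}$ survives the local surgery of the oriented sum), we conclude $g^r_{n,k} = g_{n,k} + r$. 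Because $g_{n,k+1} - g_{n,k} = 14n$, letting $r$ range over $\{0, 1, \dots, 14n\}$ exhausts every integer genus between $g_{n,k}$ and $g_{n,k+1}$.

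\textbf{Fibration and pseudo-Anosov monodromy.} Let $\alpha \in \mathcal{K}_{n,k}$ be the Poincar\'e dual of $P_{n,k}$ and $\alpha' \in \overline{\mathcal{K}_{n,k}}$ the Poincar\'e dual of $F_{n,k}$ given by Proposition~\ref{lem:genus3}. Additivity of Poincar\'e duals under oriented sum (Section~\ref{sec:oriented-sums}) identifies the Poincar\'e dual of $P^r_{n,k}$ with $\alpha + r\alpha'$, which lies in $\mathcal{K}_{n,k}$ since this is an open convex cone and $\alpha'$ is in its closure. Theorem~\ref{thm:classifying-fibrations} then produces a fibration $\pi_r \colon M_{n,k} \to S^1$ corresponding to this class, and iterating Proposition~\ref{thm:oriented-sum} with Theorem~\ref{thm:strong-duality} identifies $P^r_{n,k}$ as a fiber of $\pi_r$, isotopic to an embedded surface transverse to the suspension flow via a perturbation along the flow as in the proof of Proposition~\ref{lem:genus3}. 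The monodromy $\varphi_r$ is pseudo-Anosov because the orientation double cover $\wt{M}_{n,k}$, being the mapping torus of the pseudo-Anosov lift $\wt{f}_{n,k}$ (Proposition~\ref{prop:2}), is hyperbolic by Thurston's hyperbolization theorem; hence the lifted fibration has pseudo-Anosov monodromy, which descends to $\varphi_r$ via Proposition~\ref{prop:2}.

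\textbf{The main obstacle: the $2n$ fixed singularities.} The delicate remaining step is to exhibit $2n$ singularities of $\varphi_r$'s invariant foliation fixed by $\varphi_r$. I would follow Yazdi's approach, exploiting the $\rho_1$-symmetry of the Penner construction: the invariant foliations of $f_{n,k}$ have distinguished $1$-prong singularities at the $2n$ marked points in a fixed $\rho_2$-slice, namely $p$, $q$, and their $n-1$ $\rho_1$-rotates. In $M_{n,k}$ these points trace out $2n$ closed orbits $\gamma_1, \dots, \gamma_{2n}$ of the suspension flow. The key step is to compute the signed intersection of each orbit with $F_{n,k}$ from the explicit geometry of $\gamma$, $\widehat{\gamma}$, and the suspension annulus joining them (Figures~\ref{fig:curves} and \ref{fig:gammacurves}), and to verify that the pairing $\langle \alpha + r\alpha', [\gamma_i]\rangle$ equals $1$, so that each $\gamma_i$ pierces $P^r_{n,k}$ in a single point. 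Each such intersection is then a fixed point of $\varphi_r$, and flow-invariance of the foliation ensures that the $1$-prong structure is preserved. Making the signed intersection count rigorous and confirming the preservation of the prong structure constitute the technical crux of the proof.
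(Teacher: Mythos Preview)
Your proposal is correct and follows the same route as the paper. For the fibration and monodromy the paper simply cites Proposition~\ref{prop:asymptotic}, which packages the double-cover and hyperbolization argument you unpack; for the $2n$ fixed singularities the paper's argument is shorter than you anticipate: it just observes that $F_{n,k}$ (built from $\gamma\subset T_{0,0}$ and the suspension annulus) is disjoint from the closed orbits through the marked points, so the oriented sum leaves a neighborhood of those orbits untouched and they still pierce the new fiber once---this geometric disjointness is precisely the statement $\langle \alpha',[\gamma_i]\rangle=0$ underlying your pairing computation.
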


\begin{proof}
  The Euler characteristic of an oriented sum is the sum of the Euler characteristics of the summands:
  \begin{align*}
    \chi(P^r_{n,k}) &= \chi(P_{n,k}) + r\cdot\chi(F_{n,k}) \\
                    &= (-2g_{n,k} + 2)-2r \\
                    &= -2(g_{n,k} + r) + 2.
  \end{align*}
  Since $P_{n,k}^r$ has no boundary or punctures, we have that the genus of $P_{n,k}^r$ is $g_{n,k}+r$.

  By Lemma \ref{lem:genus3} we know that $F_{n,k}$ is incompressible and transverse to the suspension flow given by $f_{n,k}$.  Therefore by Proposition \ref{prop:asymptotic}, there is a pseudo-Anosov homeomorphism $f_{n,k}^r$ of $P_{n,k}^r=P_{n,k}+rF_{n,k}$.

 As in Yazdi \cite[Lemma 3.5]{yazdibounds}, $f_{n,k}$ fixes the $2n$ singularities of the stable foliation that are the intersection points of the axis of $\rho_1$ with
  $P_{n,k}$. By Lemma \ref{lem:genus3}, the surface $F_{n,k}$ can be isotoped to be transverse to
  the suspension flow and disjoint from the orbit of the $2n$ singularities of $f_{n,k}$.  Hence the monodromy
  $f^r_{n,k}$ still fixes the corresponding $2n$ singularities on $P^r_{n,k}$.
\end{proof}

We now prove the non-orientable version of the final piece of Yazdi's proof \cite[Lemma 3.6]{yazdi2018pseudo}.
\begin{lem}
\label{lem:bound}
Let $\lambda_{n,k}^r$ be the stretch factor of $f_{n,k}^r:P_{n,k}^r\rightarrow P_{n,k}^r$. Then there exists a constant $C > 0$ such that for every $n \geq 1$, $k \geq 3$, and $0 \leq r \leq 14n$ we have the following upper bound on $\log(\lambda_{n,k}^r)$:
\begin{align*}
  \log(\lambda^r_{n,k}) \leq C\frac{n}{g^r_{n,k}}.
\end{align*}
\end{lem}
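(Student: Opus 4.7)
The plan is to combine the upper bound on $\log(\lambda_{n,k})$ from Lemma \ref{lem:upperbound} with the monotonicity of stretch factors within a fibered cone provided by Proposition \ref{prop:asymptotic}, and then compare the denominators $g_{n,k}$ and $g^r_{n,k}$ up to a uniform multiplicative constant. The second comparison is purely elementary, so almost all of the content of the lemma is already packaged in the results from the previous steps.

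First, I would apply Proposition \ref{prop:asymptotic} to the mapping torus $M_{n,k}$ with the relatively oriented incompressible surface $F_{n,k} \cong \mathcal{N}_3$ produced in Proposition \ref{lem:genus3}. Since $F_{n,k}$ is transverse to the suspension flow of $f_{n,k}$, Proposition \ref{prop:asymptotic} produces, for each $r \geq 1$, a pseudo-Anosov homeomorphism of the oriented sum $P_{n,k} + rF_{n,k} = P^r_{n,k}$ whose stretch factor is at most $\lambda_{n,k}$; this monodromy is exactly the $f^r_{n,k}$ appearing in the statement (see the preceding lemma where $P^r_{n,k}$ is identified with a fiber of a fibration of $M_{n,k}$). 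The case $r = 0$ is trivial. Combined with Lemma \ref{lem:upperbound}, one obtains
\[
\log(\lambda^r_{n,k}) \leq \log(\lambda_{n,k}) \leq C'\,\frac{n}{g_{n,k}}.
\]

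The remaining step is to replace the denominator $g_{n,k}$ by $g^r_{n,k} = g_{n,k} + r$ at the cost of an absolute constant. Because $0 \leq r \leq 14n$, $g_{n,k} = (14k-2)n + 2$, and $k \geq 3$, we have $g_{n,k} \geq 40n$, so
\[
\frac{g^r_{n,k}}{g_{n,k}} = 1 + \frac{r}{g_{n,k}} \leq 1 + \frac{14n}{40n} = \frac{27}{20},
\]
which gives $n/g_{n,k} \leq \tfrac{27}{20}\,n/g^r_{n,k}$. Setting $C \coloneqq \tfrac{27}{20}C'$ completes the proof.

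There is no genuine obstacle here: the heavy lifting was done in Lemma \ref{lem:upperbound} (the train track bound on $\lambda_{n,k}$) and in Proposition \ref{lem:genus3} (the construction of $F_{n,k}$ as an incompressible, relatively oriented, flow-transverse copy of $\mathcal{N}_3$), together with the non-orientable fibered-face machinery culminating in Proposition \ref{prop:asymptotic}. The final lemma is essentially a bookkeeping step that propagates the bound for $\lambda_{n,k}$ across the at-most-$14n$ intermediate genera between $g_{n,k}$ and $g_{n,k+1}$.
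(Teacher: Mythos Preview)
Your proof is correct and follows essentially the same approach as the paper's: bound $\log(\lambda^r_{n,k})$ above by $\log(\lambda_{n,k})$ via the fibered-cone monotonicity, apply Lemma~\ref{lem:upperbound}, and then absorb the passage from $g_{n,k}$ to $g^r_{n,k}$ into the constant. The only cosmetic differences are that the paper invokes the function $h$ from Theorem~\ref{thm:fm} and Theorem~\ref{thm:alm} directly (whereas you cite their packaging as Proposition~\ref{prop:asymptotic}), and uses the cruder estimate $g^r_{n,k} < 2g_{n,k}$ in place of your $27/20$ ratio.
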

\begin{proof}
  Let $\mathcal{K} = \mathcal{K}_{n,k}$ be the fibered cone in $H^1(M_{n,k};\mathbb{R})$ corresponding to $f_{n,k}$ and $h: \mathcal{K} \xrightarrow[]{} \mathbb{R}$
  the function described in \autoref{thm:fm}. Note that $g_{n,k}\geq 42$, therefore we have the following bounds on $g_{n,k}^r$:
  \begin{align*}
    g^r_{n,k} &= g_{n,k} + r \\
              &\leq g_{n,k} + 14n \\
              &< 2g_{n,k}.
  \end{align*}
  Let $\omega$ be the Poincar\'e dual of $P^r_{n,k}$ and $\alpha$ the Poincar\'e dual of $P_{n,k}$.  Then the following string of inequalities holds:
  \begin{align*}
    h([\omega]) &< h([\alpha]) &&\text{(Convexity of $h$)}\\
                &\leq C'\frac{n}{g_{n,k}} &&\text{(Lemma \ref{lem:upperbound})}\\
                &\leq 2C'\frac{n}{g^r_{n,k}}&&\text{(upper bound for $g_{n,k}^r$)}.\qedhere
  \end{align*}
\end{proof}

In the initial construction of $P_{n,k}$, there were $2n$  marked points, which were singularities of the map $f_{n,k}$.  By the construction of $P_{n,k}^r$, these marked points are also singularities of $f^r_{n,k}$.  Now we puncture $P_{n,k}^r$ at $n$ of these marked points.  We could think of this as removing all copies of the point $p$ in the construction of $P_{n,k}$ in step 1.

We can now give a proof of \autoref{thm:stretch1}.

\begin{proof}[Proof of \autoref{thm:stretch1}]
As above, the lower bound follows easily from the lower bound in the orientable setting.

  To find the upper bound, let $C'=\frac{C}{2}$ be the value given in Lemma \ref{lem:bound}. Let $B'_2(n)$ be the
  following quantity:
  \begin{align*}
    B'_2(n) = \max\{2C'n, \ell'_{1,n}, 2\ell'_{2,n}, \dots, (40n + 1)\ell'_{40n+1,n}\}.
  \end{align*}
  By Proposition \ref{prop:asymptotic} and Lemma \ref{lem:bound}, $B'_2(n)$ is an upper bound for $g\cdot \ell'_{g,n}$.
\end{proof}

\bibliographystyle{plain}
\bibliography{references}

\begin{thebibliography}{10}

\bibitem{AD}
John~W. Aaber and Nathan Dunfield.
\newblock Closed surface bundles of least volume.
\newblock {\em Algebr. Geom. Topol.}, 10(4):2315--2342, 2010.

\bibitem{agol6983pseudo}
Ian Agol, Christopher~J. Leininger, and Dan Margalit.
\newblock {Pseudo-Anosov stretch factors and homology of mapping tori}.
\newblock {\em Journal of the London Mathematical Society}, 93(3):664--682, 04
  2016.

\bibitem{aramayona2009injections}
Javier Aramayona, Christopher~J Leininger, and Juan Souto.
\newblock Injections of mapping class groups.
\newblock {\em Geometry \& Topology}, 13(5):2523--2541, 2009.

\bibitem{AY}
Pierre Arnoux and Jean-Christophe Yoccoz.
\newblock Construction de diff\'{e}omorphismes pseudo-{A}nosov.
\newblock {\em C. R. Acad. Sci. Paris S\'{e}r. I Math.}, 292(1):75--78, 1981.

\bibitem{bauer}
Max Bauer.
\newblock An upper bound for the least dilatation.
\newblock {\em Trans. Amer. Math. Soc.}, 330(1):361--370, 1992.

\bibitem{calegari2007foliations}
Danny Calegari.
\newblock {\em Foliations and the geometry of 3-manifolds}.
\newblock Oxford University Press on Demand, 2007.

\bibitem{CH}
Jin-Hwan Cho and Ji-Young Ham.
\newblock The minimal dilatation of a genus-two surface.
\newblock {\em Experiment. Math.}, 17(3):257--267, 2008.

\bibitem{FLP}
Albert Fathi, Fran\c{c}ois Laudenbach, and Valentin Po\'{e}naru.
\newblock {\em Thurston's work on surfaces}, volume~48 of {\em Mathematical
  Notes}.
\newblock Princeton University Press, Princeton, NJ, 2012.
\newblock Translated from the 1979 French original by Djun M. Kim and Dan
  Margalit.

\bibitem{fried1982flow}
David Fried.
\newblock Flow equivalence, hyperbolic systems and a new zeta function for
  flows.
\newblock {\em Commentarii Mathematici Helvetici}, 57(1):237--259, 1982.

\bibitem{fried1983transitive}
David Fried.
\newblock Transitive anosov flows and pseudo-anosov maps.
\newblock {\em Topology}, 22(3):299--303, 1983.

\bibitem{HS}
Ji-Young Ham and Won~Taek Song.
\newblock The minimum dilatation of pseudo-{A}nosov 5-braids.
\newblock {\em Experiment. Math.}, 16(2):167--179, 2007.

\bibitem{hironaka}
Eriko Hironaka.
\newblock Small dilatation mapping classes coming from the simplest hyperbolic
  braid.
\newblock {\em Algebr. Geom. Topol.}, 10(4):2041--2060, 2010.

\bibitem{HK}
Eriko Hironaka and Eiko Kin.
\newblock A family of pseudo-{A}nosov braids with small dilatation.
\newblock {\em Algebr. Geom. Topol.}, 6:699--738, 2006.

\bibitem{HK20}
Susumu Hirose and Eiko Kin.
\newblock A construction of pseudo-{A}nosov braids with small normalized
  entropies.
\newblock {\em New York J. Math.}, 26:562--597, 2020.

\bibitem{ivanov}
N.~V. Ivanov.
\newblock Coefficients of expansion of pseudo-{A}nosov homeomorphisms.
\newblock {\em Zap. Nauchn. Sem. Leningrad. Otdel. Mat. Inst. Steklov. (LOMI)},
  167(Issled. Topol. 6):111--116, 191, 1988.

\bibitem{KTbounds}
Eiko Kin and Mitsuhiko Takasawa.
\newblock Pseudo-{A}nosovs on closed surfaces having small entropy and the
  {W}hitehead sister link exterior.
\newblock {\em J. Math. Soc. Japan}, 65(2):411--446, 2013.

\bibitem{KT}
Eiko Kin and Mitsuhiko Takasawa.
\newblock The boundary of a fibered face of the magic 3-manifold and the
  asymptotic behavior of minimal pseudo-{A}nosov dilatations.
\newblock {\em Hiroshima Math. J.}, 46(3):271--287, 2016.

\bibitem{LT}
Erwan Lanneau and Jean-Luc Thiffeault.
\newblock On the minimum dilatation of pseudo-{A}nosov homeromorphisms on
  surfaces of small genus.
\newblock {\em Ann. Inst. Fourier (Grenoble)}, 61(1):105--144, 2011.

\bibitem{leininger2013number}
Christopher~J Leininger and Dan Margalit.
\newblock On the number and location of short geodesics in moduli space.
\newblock {\em Journal of Topology}, 6(1):30--48, 2013.

\bibitem{LS}
Livio Liechti and Bal\'{a}zs Strenner.
\newblock Minimal pseudo-{A}nosov stretch factors on nonoriented surfaces.
\newblock {\em Algebr. Geom. Topol.}, 20(1):451--485, 2020.

\bibitem{Loving}
Marissa Loving.
\newblock Least dilatation of pure surface braids.
\newblock {\em Algebr. Geom. Topol.}, 19(2):941--964, 2019.
\newblock Appendix written jointly with Hugo Parlier.

\bibitem{matsumoto1987topological}
Shigenori Matsumoto.
\newblock Topological entropy and thurston’s norm of atoroidal surface
  bundles over the circle.
\newblock {\em J. Fac. Sci. Univ. Tokyo Sect. IA Math}, 34(3):763--778, 1987.

\bibitem{mcmullen2000polynomial}
Curtis~T McMullen.
\newblock Polynomial invariants for fibered 3-manifolds and teichm{\"u}ller
  geodesics for foliations.
\newblock In {\em Annales scientifiques de l’Ecole normale sup{\'e}rieure},
  volume~33, pages 519--560. Elsevier, 2000.

\bibitem{minakawa}
Hiroyuki Minakawa.
\newblock Examples of pseudo-{A}nosov homeomorphisms with small dilatations.
\newblock {\em J. Math. Sci. Univ. Tokyo}, 13(2):95--111, 2006.

\bibitem{penner1988construction}
Robert~C Penner.
\newblock A construction of pseudo-anosov homeomorphisms.
\newblock {\em Transactions of the American Mathematical Society},
  310(1):179--197, 1988.

\bibitem{penner1991bounds}
Robert~C Penner.
\newblock Bounds on least dilatations.
\newblock {\em Proceedings of the American Mathematical Society},
  113(2):443--450, 1991.

\bibitem{SKL}
Won~Taek Song, Ki~Hyoung Ko, and J\'{e}r\^{o}me~E. Los.
\newblock Entropies of braids.
\newblock {\em J. Knot Theory Ramifications}, 11(4):647--666, 2002.
\newblock Knots 2000 Korea, Vol. 2 (Yongpyong).

\bibitem{thurston1986norm}
William~P Thurston.
\newblock A norm for the homology of 3-manifolds.
\newblock {\em Memoirs of the American Mathematical Society}, 59(339):99--130,
  1986.

\bibitem{thurston_hyp}
William~P. Thurston.
\newblock Hyperbolic structures on 3-manifolds, ii: Surface groups and
  3-manifolds which fiber over the circle, 1998.

\bibitem{tsai2009asymptotic}
Chia-Yen Tsai.
\newblock The asymptotic behavior of least pseudo-anosov dilatations.
\newblock {\em Geometry \& Topology}, 13(4):2253--2278, 2009.

\bibitem{valdivia}
Aaron~D. Valdivia.
\newblock Sequences of pseudo-{A}nosov mapping classes and their asymptotic
  behavior.
\newblock {\em New York J. Math.}, 18:609--620, 2012.

\bibitem{yazdibounds}
Mehdi Yazdi.
\newblock Lower bound for dilatations.
\newblock {\em J. Topol.}, 11(3):602--614, 2018.

\bibitem{yazdi2018pseudo}
Mehdi Yazdi.
\newblock Pseudo-anosov maps with small stretch factors on punctured surfaces.
\newblock {\em Algebraic {\&} Geometric Topology}, 20(4):2095--2128, July 2020.

\end{thebibliography}

\end{document}